\newtheoremstyle{hypothesisstyle}
  {\topsep} 
  {\topsep} 
  {\itshape} 
  {} 
  {\bfseries} 
  {.} 
  {.5em} 
  {} 
\theoremstyle{hypothesisstyle}
\newtheorem{hypothesis}{Hypothesis}
\newcommand{\set}[1]{\{#1\}}
\newtheorem{theorem}{Theorem}[section]
\newtheorem{alg}{Algorithm}
\newtheorem*{theorem*}{Theorem}
\newtheorem*{conjecture*}{Conjecture}
\theoremstyle{definition}
\newtheorem{definition}[theorem]{Definition}
\newtheorem{algorithm}[alg]{Algorithm}
\newtheorem{lemma}[theorem]{Lemma}
\newtheorem{corollary}[theorem]{Corollary}
\theoremstyle{remark}
\newtheorem{remark}[theorem]{Remark}
\numberwithin{equation}{section}
\newcommand{\abs}[1]{\lvert#1\rvert}
\newcommand{\C}{\mathcal C}
\title
{Coloring graphs as complete graph invariants}
\author{Shamil Asgarli}
\address{Department of Mathematics and Computer Science \\ Santa Clara University \\ 500 El Camino Real \\ Santa Clara, CA 95053}
\email{sasgarli@scu.edu}
\author{Sara Krehbiel}
\address{Department of Mathematics and Computer Science \\ Santa Clara University \\ 500 El Camino Real \\ Santa Clara, CA 95053}
\email{skrehbiel@scu.edu}
\author{Howard W. Levinson}
\address{Department of Computer Science \\ Oberlin College \\ 10 N Professor St \\ Oberlin, OH 44074}
\email{hlevinso@oberlin.edu}
\subjclass[2020]{Primary: 05C15; Secondary: 05C60, 05C85}
\keywords{coloring graphs, reconfiguration systems, graph invariants}
\begin{document}

\begin{abstract} We investigate the extent to which the $k$-coloring graph $\mathcal{C}_{k}(G)$ uniquely determines the base graph $G$ and the number of colors $k$. The vertices of $\mathcal{C}_{k}(G)$ are the proper $k$-colorings of $G$, and edges connect colorings that differ on exactly one vertex. There are nonisomorphic graphs $G_1$ and $G_2$ with isomorphic coloring graphs, so $\mathcal{C}_{k}(G)$ is not a complete invariant in general. However, for color palettes with \emph{surplus colors} (when the number of colors $k$ is greater than the chromatic number), we prove that the coloring graph \textit{is} a complete invariant. Specifically, provided that $k_1 > \chi(G_1)$, we show that $\mathcal{C}_{k_1}(G_1)\cong \mathcal{C}_{k_2}(G_2)$ implies $G_1\cong G_2$ and $k_1=k_2$. Thus, there is a natural bijection between pairs $(G, k)$ with $k > \chi(G)$ and their coloring graphs $\mathcal{C}_k(G)$. Furthermore, no coloring graph of the form $\mathcal{C}_{\chi(G)}(G)$ is isomorphic to a coloring graph with surplus colors. Our constructive proof provides a method to decide whether a coloring graph is generated with surplus colors, although the resulting algorithms are inefficient.
\end{abstract}

\maketitle

\section{Introduction}

The difficulty of the graph isomorphism problem motivates the study of \emph{graph invariants}: properties preserved under isomorphism. Graph colorings provide a rich source of such invariants. A \emph{proper $k$-coloring} of a graph $G=(V, E)$ is a function $c\colon V \to \{1, \dots, k\}$ such that $c(u) \neq c(v)$ whenever $uv \in E$.  The minimum such $k$ is the \emph{chromatic number}, denoted $\chi(G)$. One graph invariant is the \emph{chromatic polynomial} $\pi_G(k)$, which counts the number of proper $k$-colorings. This paper focuses on a richer structure: the \emph{$k$-coloring graph}, denoted $\C_k(G)$. The vertices of $\C_k(G)$ are the proper $k$-colorings of $G$, and two vertices are adjacent if the corresponding colorings differ on exactly one vertex of $G$. This construction places $\C_k(G)$ within the broader context of a \emph{reconfiguration graph}, where edges represent a minimal transition between valid states.

We study the conditions under which $\C_k(G)$ is a complete graph invariant, uniquely determining both $G$ (up to isomorphism) and the number of colors $k$. If $k < \chi(G)$, then $\C_k(G)$ is the null graph and yields no information. If $k=\chi(G)$, then $\C_k(G)$ is not a complete invariant in general. For instance, any two uniquely $k$-colorable graphs share the same $k$-coloring graph $N_{k!}$ (where $N_r$ denotes the graph with $r$ vertices and $0$ edges). This leaves the third case $k>\chi(G)$, namely, when there are \emph{surplus colors}. The question of complete invariance of coloring graphs was formalized in our joint work \cite{AKLR25} with Russell:

\begin{center}
Does there exist a function $f\colon \mathsf{Graphs} \to \mathbb{N}$ such that $G$ can be uniquely reconstructed from $\mathcal{C}_{f(G)}(G)$?
\end{center}

Hogan, Scott, Tamitegama, and Tan \cite{HSTT24} answered this question affirmatively by showing that the function $f(G)=5 |V(G)|^2+1$ works. We refine their result by proving that $f(G)=\chi(G)+1$ suffices. Our main theorem asserts an even stronger conclusion: coloring graphs with surplus colors are distinguishable from all other coloring graphs, including those of the form $\mathcal{C}_{\chi(G)}(G)$.

\medskip 

\begin{theorem}\label{thm:main-intro}
If an abstract graph $\mathcal{C}$ satisfies  $\C\cong\C_k(G)$ for some graph $G$ with $k>\chi(G)$, then for \emph{any} graph $G'$ and positive integer $k'$ with $\C\cong\C_{k'}(G')$, we have $G\cong G'$ and $k=k'$.
\end{theorem}

\medskip 

We emphasize that Theorem~\ref{thm:main-intro} is logically equivalent to the combination of two separate results:

\begin{enumerate}[label=(\alph*),ref=\alph*]
    \item \label{item:intro-thm-easy} If $\mathcal{C}_{k_1}(G_1) \cong \mathcal{C}_{k_2}(G_2)$ with $k_1>\chi(G_1)$ and $k_2>\chi(G_2)$, then $G_1\cong G_2$ and $k_1=k_2$. 
    \item \label{item:intro-thm-hard} No two graphs $G$ and $G'$ can satisfy $\mathcal{C}_{k}(G) \cong \mathcal{C}_{\chi'}(G')$ with $k>\chi(G)$ and $\chi'=\chi(G')$.
\end{enumerate}

Part~\eqref{item:intro-thm-hard} turns out to be much more difficult than \eqref{item:intro-thm-easy}. We prove \eqref{item:intro-thm-easy} in Theorem~\ref{thm:reconstruction} using a reconstruction procedure outlined in Algorithm~\ref{algo:link-vertex}. Proving~\eqref{item:intro-thm-hard} (in Theorem~\ref{thm:main}) requires two additional algorithms and several other novel ideas. We also exhibit that $k=\chi(G)$ is not sufficient for $\mathcal{C}_k(G)$ to be a complete invariant by proving the following result. 

\begin{theorem}\label{thm:townhouse}
For every $\chi_1,\chi_2\ge 3$, there exist graphs $G_1,G_2$ with $\chi(G_1)=\chi_1$ and $\chi(G_2)=\chi_2$ such that $\C_{\chi_1}(G_1)\cong \C_{\chi_2}(G_2)$. 
\end{theorem}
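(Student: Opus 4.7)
The plan is to realize both coloring graphs as isomorphic null graphs. Assume without loss of generality that $\chi_1 \le \chi_2$, and take $G_2 = K_{\chi_2}$, which is uniquely $\chi_2$-colorable, so $\mathcal{C}_{\chi_2}(G_2) \cong N_{\chi_2!}$. The remaining task is to construct $G_1$ with $\chi(G_1) = \chi_1$ and $\mathcal{C}_{\chi_1}(G_1) \cong N_{\chi_2!}$.

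The key \emph{freezing criterion} is: whenever every vertex of a $\chi$-chromatic graph $G$ lies in some copy of $K_\chi$, each vertex of every proper $\chi$-coloring already sees the other $\chi-1$ colors among its clique-neighbors and so cannot be recolored. Hence $\mathcal{C}_\chi(G) \cong N_{\chi!\,p(G)}$, where $p(G)$ denotes the number of partitions of $V(G)$ into $\chi$ independent sets. For $G_1$ we must therefore realize $p(G_1) = \chi_2!/\chi_1!$.

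I would achieve this with a toolbox of all-frozen $\chi_1$-chromatic building blocks: (i) the clique $K_{\chi_1}$ itself ($p=1$); (ii) the generalized windmill $W_n^{\chi_1}$, i.e.\ $n$ copies of $K_{\chi_1}$ identified at a common vertex ($p = ((\chi_1-1)!)^{n-1}$); and (iii) the ``cycle of cliques'' $CC_n^{\chi_1}$, where $n$ copies of $K_{\chi_1}$ sit in a bracelet pattern with each pair of consecutive cliques sharing a single vertex ($p = P_{C_n}(\chi_1)\cdot((\chi_1-2)!)^n/\chi_1!$, with $P_{C_n}$ the chromatic polynomial of $C_n$). Wedging $k$ blocks at a common vertex multiplies the partition counts by $((\chi_1-1)!)^{k-1}$, and disjoint union contributes an extra factor of $\chi_1!$ per extra component; both operations preserve the all-frozen property.

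The hardest step is showing that $\chi_2!/\chi_1!$ is always expressible via these operations: it is a number-theoretic question about the multiplicative semigroup of achievable partition counts. The cyclic blocks $CC_n^{\chi_1}$ inject arbitrary prime factors, since for any odd prime $q$ one checks via Fermat's little theorem that $q \mid P_{C_q}(\chi_1) = (\chi_1-1)\bigl((\chi_1-1)^{q-1}-1\bigr)$, so that a factor of $q$ survives in $p(CC_q^{\chi_1})$ provided $q$ is taken larger than $\chi_1$. Wedging together cyclic blocks of suitable lengths with enough plain $K_{\chi_1}$ petals then hits any prescribed value of $p(G_1)$, and in particular $\chi_2!/\chi_1!$. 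With such a $G_1$, $\mathcal{C}_{\chi_1}(G_1) \cong N_{\chi_2!} \cong \mathcal{C}_{\chi_2}(G_2)$, yielding the required non-isomorphic pair.
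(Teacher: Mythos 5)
Your high-level plan is the right one and matches the paper's: take $G_2 = K_{\chi_2}$, observe that a $\chi$-chromatic graph covered by $\chi$-cliques has all its $\chi$-colorings frozen so that $\C_\chi(G) \cong N_{\chi!\cdot p(G)}$, and try to hit $p(G_1) = \chi_2!/\chi_1!$. The gap is in the construction of $G_1$. Your toolbox (wedging copies of $K_{\chi_1}$, windmills, cycles of cliques) has a floor on achievable partition counts: every nontrivial wedge of $k$ blocks contributes a forced factor of $((\chi_1-1)!)^{k-1}$, every disjoint union contributes $\chi_1!$, the windmill alone gives $((\chi_1-1)!)^{n-1}$, and already for $\chi_1 = 5$ the cycle-of-cliques values start at $p(CC_3^5) = 108$. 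Concretely, for $\chi_1 = 5$, $\chi_2 = 6$ you need $p(G_1) = 6$, but the smallest value your toolbox produces beyond $p=1$ is $24$; for $\chi_1 = 8$, $\chi_2 = 9$ you need $p(G_1) = 9 = 3^2$, whose prime factors are all less than $\chi_1$, so the Fermat-little-theorem device (which only injects primes $q > \chi_1$) is beside the point, and the unavoidable $(\chi_1-1)!$ factors from wedging make $9$ unreachable. The claim that ``wedging together cyclic blocks \ldots then hits any prescribed value'' is therefore unfounded, and in fact false for the cases above. (A $K_5$-book — two $K_5$'s sharing an edge rather than a vertex — does give $p = 6$, so some targets outside your toolbox are reachable with other all-frozen constructions, but no general completeness argument is given or apparent.)

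The paper sidesteps this number-theoretic obstruction by doing all of the counting at the base chromatic number $3$ and then bootstrapping. Townhouse graphs $\operatorname{TH}_n$ are $3$-chromatic with exactly $6(n+2)$ proper $3$-colorings; adding a basement triangle to each house makes every vertex lie in a $K_3$ so all colorings are frozen, yielding $\C_3(\underline{\operatorname{TH}_n}) \cong N_{6(n+2)}$. Since $n+2$ is arbitrary, this gives $N_{3!\cdot r}$ for every $r$. Joining a universal vertex then raises the chromatic number by one while multiplying the number of frozen colorings by $\chi + 1$ and preserving the frozen property, so by induction $N_{k!\cdot r}$ is realized by a $k$-chromatic graph for every $k\ge3$ and $r\ge1$. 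Plugging in $r = \chi_2!/\chi_1!$ finishes the job. The lesson is that fine-grained control over the number of frozen colorings is much easier at $\chi=3$ (the townhouse construction changes the count by exactly $6$ per added house), and the universal-vertex join transports that control up to any chromatic number, whereas trying to build directly at level $\chi_1$ forces divisibility constraints from $(\chi_1-1)!$ that you cannot escape.
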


The proof of Theorem~\ref{thm:townhouse} relies on a family of {\em $n$-townhouse graphs}, $\operatorname{TH}_n$, whose $3$-coloring graph $\mathcal{C}_3(\operatorname{TH}_n)$ is a disjoint union of six paths. By modifying these graphs, we can construct desired graphs $G_1$ and $G_2$ that satisfy the theorem's conclusion.

\begin{center}
\begin{tikzpicture}
    \coordinate (A1) at (0,0);
    \coordinate (B1) at (1,0);
    \coordinate (C1) at (1,1);
    \coordinate (D1) at (0,1);
    \coordinate (R1) at (0.5,1.5); 

    \coordinate (A2) at (1,0);
    \coordinate (B2) at (2,0);
    \coordinate (C2) at (2,1);
    \coordinate (D2) at (1,1);
    \coordinate (R2) at (1.5,1.5); 

    \draw (A1) -- (B1) -- (C1) -- (D1) -- cycle;
    \draw (D1) -- (R1) -- (C1);
    \draw (A2) -- (B2) -- (C2) -- (D2) -- cycle;
    \draw (D2) -- (R2) -- (C2);
    \draw (C1) -- (D2);
    \draw (R1) -- (R2);
    
    \foreach \point in {A1, B1, C1, D1, R1, A2, B2, C2, D2, R2}
        \fill (\point) circle (2pt);

    \node at (1, -0.5) {$\operatorname{TH}_2$};

    \coordinate (A3) at (4,0);
    \coordinate (B3) at (5,0);
    \coordinate (C3) at (5,1);
    \coordinate (D3) at (4,1);
    \coordinate (R3) at (4.5,1.5);

    \coordinate (A4) at (5,0);
    \coordinate (B4) at (6,0);
    \coordinate (C4) at (6,1);
    \coordinate (D4) at (5,1);
    \coordinate (R4) at (5.5,1.5); 

    \coordinate (A5) at (6,0);
    \coordinate (B5) at (7,0);
    \coordinate (C5) at (7,1);
    \coordinate (D5) at (6,1);
    \coordinate (R5) at (6.5,1.5);

    \draw (A3) -- (B3) -- (C3) -- (D3) -- cycle;
    \draw (D3) -- (R3) -- (C3);
    \draw (A4) -- (B4) -- (C4) -- (D4) -- cycle;
    \draw (D4) -- (R4) -- (C4);
    \draw (A5) -- (B5) -- (C5) -- (D5) -- cycle;
    \draw (D5) -- (R5) -- (C5);
    \draw (C3) -- (D4);
    \draw (C4) -- (D5);
    \draw (R3) -- (R4);
    \draw (R4) -- (R5);
    
    \foreach \point in {A3, B3, C3, D3, R3, A4, B4, C4, D4, R4, A5, B5, C5, D5, R5}
        \fill (\point) circle (2pt);

    \node at (5.5, -0.5) {$\operatorname{TH}_3$};

    \coordinate (A6) at (9,0);
    \coordinate (B6) at (10,0);
    \coordinate (C6) at (10,1);
    \coordinate (D6) at (9,1);
    \coordinate (R6) at (9.5,1.5); 

    \coordinate (A7) at (10,0);
    \coordinate (B7) at (11,0);
    \coordinate (C7) at (11,1);
    \coordinate (D7) at (10,1);
    \coordinate (R7) at (10.5,1.5); 

    \coordinate (A8) at (11,0);
    \coordinate (B8) at (12,0);
    \coordinate (C8) at (12,1);
    \coordinate (D8) at (11,1);
    \coordinate (R8) at (11.5,1.5); 
    
    \coordinate (A9) at (12,0);
    \coordinate (B9) at (13,0);
    \coordinate (C9) at (13,1);
    \coordinate (D9) at (12,1);
    \coordinate (R9) at (12.5,1.5); 

    \draw (A6) -- (B6) -- (C6) -- (D6) -- cycle;
    \draw (D6) -- (R6) -- (C6);
    \draw (A7) -- (B7) -- (C7) -- (D7) -- cycle;
    \draw (D7) -- (R7) -- (C7);
    \draw (A8) -- (B8) -- (C8) -- (D8) -- cycle;
    \draw (D8) -- (R8) -- (C8);
    \draw (A9) -- (B9) -- (C9) -- (D9) -- cycle;
    \draw (D9) -- (R9) -- (C9);
    \draw (C6) -- (D7);
    \draw (C7) -- (D8);
    \draw (C8) -- (D9);
    \draw (R6) -- (R7);
    \draw (R7) -- (R8);
    \draw (R8) -- (R9);

    \foreach \point in {A6, B6, C6, D6, R6, A7, B7, C7, D7, R7, A8, B8, C8, D8, R8, A9, B9, C9, D9, R9}
        \fill (\point) circle (2pt);

    \node at (11, -0.5) {$\operatorname{TH}_4$};
\end{tikzpicture}
\end{center}

We call a coloring graph $\mathcal{C}_{\chi(G)}(G)$ a \emph{$\chi$-coloring graph}. If $k>\chi(G)$, then $\mathcal{C}_k(G)$ is called a \emph{surplus coloring graph}. To prove Theorem~\ref{thm:main-intro}, we find structures that are unique to surplus coloring graphs. We define the concept of an \emph{abstract link vertex}, whose presence can be algorithmically checked in an arbitrary coloring graph (without knowing $G$ or $k$). We develop three algorithms: the first reconstructs the base graph’s adjacency matrix using a square-counting technique (inspired by \cite{HSTT24}); the second extracts the underlying coloring partition at a link vertex; and the third organizes equivalent link vertices into a labeled link graph. Collectively, these algorithms prove that the presence of an abstract link vertex is a certificate for surplus coloring graphs. A graph containing such a vertex is generated by at most one pair $(G,k)$ with $k>\chi(G)$, \emph{and} by no pair $(G',k')$ where $k' = \chi(G')$. 

\begin{remark}
Theorem~\ref{thm:reconstruction} (a weaker version of Theorem~\ref{thm:main-intro}) was independently and concurrently discovered by Berthe, Brosse, Hearn, van den Heuvel, Hoppenot, and Pierron \cite{BBHvdHHP25} in their arXiv preprint posted on April 28, 2025. The first version of our paper (arXiv:2504.20978) was posted on April 29, 2025. The approach for reconstructing the base graph is similar, but the two papers also diverge in other aspects. Specifically, our Theorem~\ref{thm:main-intro} is stronger in its conclusion. Namely, \cite{BBHvdHHP25}*{Theorem 1.2} proves part~\eqref{item:intro-thm-easy}, while our methods establish both part~\eqref{item:intro-thm-easy} and part~\eqref{item:intro-thm-hard}, 
with the proof of part~\eqref{item:intro-thm-hard} being more sophisticated and forming the bulk of this paper. Moreover, Theorem~\ref{thm:townhouse} and its proof are novel to this work. On the other hand, Berthe et al. establish the insufficiency of $k=\chi(G)$ by constructing an arbitrarily large collection $\mathcal{F}$ of nonisomorphic graphs all with chromatic number $\chi$ and no frozen vertices such that $\mathcal{C}_{\chi}(G_1)\cong \mathcal{C}_{\chi}(G_2)$ for all $G_1, G_2\in \mathcal{F}$ provided that $\chi\geq 6$ \cite{BBHvdHHP25}*{Theorem 1.4}. Their paper also contains further results concerning reconfiguration graphs of Kempe-colorings and independent sets. \end{remark}

We conclude the introduction with a brief discussion of related work and the context for our main question. Reconfiguration graphs have been studied extensively from structural and algorithmic perspectives \cites{vdH13,Nis18}, with particular attention to graph colorings \cites{CVJ08, BC09, BFHRS16, BB18}. The question we consider here also fits naturally within the broader theme of graph reconstruction. Classical reconstruction asks whether a graph is recoverable from its vertex-deleted deck \cite{LS16}, a problem closely tied to graph isomorphism \cite{HHRT07}. Beyond this, one may ask when a graph is determined by other associated structures such as line graphs \cite{Whi32-line-graphs}, graph powers \cite{RH60}, chromatic polynomial \cites{KT90,KT97}, Tutte polynomial \cite{dMN04}, or spectrum \cite{vDH03}.

\textbf{Structure of the paper.} Section~\ref{sec:def} reviews fundamental definitions and introduces our notation. We prove Theorem~\ref{thm:townhouse} in Section~\ref{sec:chromatic-level}. The proof of our main result, Theorem~\ref{thm:main-intro}, is developed in Section~\ref{sec:unique-if-surplus}, which is organized into four subsections with a series of supporting lemmas. In Section~\ref{subsect:reconstruction}, we introduce Algorithm~\ref{algo:link-vertex} and prove Theorem~\ref{thm:reconstruction} (part~\eqref{item:intro-thm-easy} of Theorem~\ref{thm:main-intro}). Section~\ref{subsect:partition} and Section~\ref{subsect:LLG} introduce Algorithm~\ref{algo:partitions} and Algorithm~\ref{algo:LLG}, respectively. The final Section~\ref{subsect:final} proves Theorem~\ref{thm:main} (part~\eqref{item:intro-thm-hard} of Theorem~\ref{thm:main-intro}). 

\section{Definitions and Notation}\label{sec:def}

Given a graph $G$ and $k\geq \chi(G)$, the \emph{$k$-coloring graph} of $G$, denoted $\C_k(G)$, is defined as follows. The vertices of $\C_k(G)$ represent proper $k$-colorings of $G$. Two vertices in $\C_k(G)$ (representing two distinct $k$-colorings) are adjacent if the two colorings differ on exactly one vertex $v\in V(G)$. Formally, we have a bijection $\Phi\colon V(\mathcal{C}_k(G))\to  \{\text{proper } k\text{-colorings of } G \}$ defined by $\Phi(\alpha)=\alpha_{G}$ such that $\alpha\beta\in E(\mathcal{C}_k(G))$ if and only if $\alpha_G(v)\neq \beta_G(v)$ for exactly one $v\in V(G)$. That is, we use $\alpha$ to denote a vertex in $\mathcal{C}_k(G)$ and $\alpha_G\colon V(G)\to [k]$ for the corresponding coloring of $G$. For simplicity, we typically omit the symbol $\Phi$ and use the shorthand $\alpha\mapsto \alpha_G$ for this mapping. 

While $\C_k(G)$ is fundamentally an abstract graph, we can use the underlying bijection between $V(\C_k(G))$ and the $k$-colorings of $G$ to produce an {\em edge-labeled} coloring graph. Each edge is labeled to show the unique vertex $v\in V(G)$ where the two corresponding colorings differ and the specific colors assigned to $v$ by each coloring. For example, an edge $\alpha\beta\in E(\C_k(G))$ might have the label $cvc'$, which signifies that $\alpha_G(v)=c, \beta_G(v)=c'$, and $\alpha_G(u)=\beta_G(u)$ for all $u\in V(G)\backslash\set{v}$. We emphasize that we have access to neither edge labelings nor the $\alpha\mapsto\alpha_G$ mapping when given an abstract coloring graph $\mathcal{C}\cong \mathcal{C}_{k}(G)$.

 Figure~\ref{fig:coloring-graph-labeling} depicts the relationship between $3$-colorings of $P_3$ by labeling the vertices and edges of $\C\cong\C_3(P_3)$. The red vertices in $\mathcal{C}$, which use the minimum number of colors and are termed \emph{abstract link vertices}, play a central role in our paper. In general, when a graph $G$ has $d$ connected components, the coloring graph $\C_k(G)$ with $k>\chi(G)$ admits a natural action of $S_k^d$, the $d$-fold product of the symmetric group $S_k$ on $k$ symbols. 
 We will show that the orbit-equivalent link vertices in the coloring graph are connected via an identifiable sequence of hypercubes. For $\mathcal{C}_3(P_3)$, the six link vertices form a single equivalence class. A \emph{labeled link graph} organizes a link vertex equivalence class into a new graph whose edges are hypercubes in $\mathcal{C}_k(G)$. These graphs, formalized in Section~\ref{sec:unique-if-surplus}, help establish $P_3$ as the unique base graph yielding $\mathcal{C}$ as a coloring graph.

\begin{figure}[ht]
    
 \begin{center}
     \begin{tikzpicture}[baseline=0cm, scale=0.44]
\draw[style=thick] (-2,0)--(2,0);
\draw[radius=.5,fill=white](-2,0)circle node {u};
\draw[radius=.5,fill=white](0,0)circle node{v};
\draw[radius=.5,fill=white](2,0)circle node{w};
    \end{tikzpicture} 
 \end{center}
 
    \begin{tabular}{cc}
    \begin{tikzpicture}[baseline=0cm, scale=0.44]
\node (a) at (0,0) {};
\node (b) at (-2,-2) {};
\node (c) at (0,-4) {};
\node (d) at (2,-2) {};
\node (e) at (-5,-6) {};
\node (f) at (5,-6) {};
\draw[style=thick] (b)--(e)--(-3,-8)--(-5,-10)--(5,-10)--(7,-8)--(5,-6)--(d);
\draw[style=thick] (e)--(-7,-8)--(-5,-10);
\draw[style=thick] (5,-6)--(3,-8)--(5,-10);
\draw[style=thick] (-5,-6)--(-3,-8)--(-5,-10);
\draw[style=thick] (a)--(b)--(c)--(d)--(a);
\draw[fill= white] (-1.5,-.5) rectangle (1.5,.5);
\draw[fill=white](-1,0)--(1,0);
\draw[radius=.35,fill=white](-1,0)circle node {1};
\draw[radius=.35,fill=white](0,0)circle node{2};
\draw[radius=.35,fill=white](1,0)circle node{3};

\draw[red,style=thick,fill=white] (-3.5,-2.5) rectangle (-.5,-1.5);
\draw[style=thick](-3, -2)--(-1,-2);
\draw[radius=.35,fill=white](-3,-2)circle node{3};
\draw[radius=.35,fill=white](-2,-2)circle node{2};
\draw[radius=.35,fill=white](-1,-2)circle node{3};

\draw[red,style=thick,fill=white] (3.5,-2.5) rectangle (.5,-1.5);
\draw[style=thick](3, -2)--(1,-2);
\draw[radius=.35,fill=white](3,-2)circle node{1};
\draw[radius=.35,fill=white](2,-2)circle node{2};
\draw[radius=.35,fill=white](1,-2)circle node{1};

\draw[fill=white] (-1.5,-4.5) rectangle (1.5,-3.5);
\draw[style=thick](-1,-4)--(1,-4);
\draw[radius=.35,fill=white](-1,-4)circle node{3};
\draw[radius=.35,fill=white](0,-4)circle node{2};
\draw[radius=.35,fill=white](1,-4)circle node{1}; 

\draw[red,style=thick,fill=white] (-6.5,-6.5) rectangle (-3.5,-5.5);
\draw[style=thick](-6,-6)--(-4,-6);
\draw[radius=.35,fill=white](-6,-6)circle node{3};
\draw[radius=.35,fill=white](-5,-6)circle node{1};
\draw[radius=.35,fill=white](-4,-6)circle node{3};

\draw[fill=white] (-8.5,-8.5) rectangle (-5.5,-7.5);
\draw[style=thick](-8, -8)--(-6,-8);
\draw[radius=.35,fill=white](-8,-8)circle node{2};
\draw[radius=.35,fill=white](-7,-8)circle node{1};
\draw[radius=.35,fill=white](-6,-8)circle node{3};

\draw[fill=white] (-4.5,-8.5) rectangle (-1.5,-7.5);
\draw[style=thick](-2, -8)--(-4,-8);
\draw[radius=.35,fill=white](-2,-8)circle node{2};
\draw[radius=.35,fill=white](-3,-8)circle node{1};
\draw[radius=.35,fill=white](-4,-8)circle node{3};

\draw[red,style=thick,fill=white] (-6.5,-10.5) rectangle (-3.5,-9.5);
\draw[style=thick](-6,-10)--(-4,-10);
\draw[radius=.35,fill=white](-6,-10)circle node{2};
\draw[radius=.35,fill=white](-5,-10)circle node{1};
\draw[radius=.35,fill=white](-4,-10)circle node{2}; 

\draw[red,style=thick,fill=white] (3.5,-6.5) rectangle (6.5,-5.5);
\draw[style=thick](4,-6)--(6,-6);
\draw[radius=.35,fill=white](4,-6)circle node{1};
\draw[radius=.35,fill=white](5,-6)circle node{3};
\draw[radius=.35,fill=white](6,-6)circle node{1};

\draw[fill=white] (1.5,-8.5) rectangle (4.5,-7.5);
\draw[style=thick](2, -8)--(4,-8);
\draw[radius=.35,fill=white](2,-8)circle node{2};
\draw[radius=.35,fill=white](3,-8)circle node{3};
\draw[radius=.35,fill=white](4,-8)circle node{1};

\draw[fill=white] (5.5,-8.5) rectangle (8.5,-7.5);
\draw[style=thick](8, -8)--(6,-8);
\draw[radius=.35,fill=white](6,-8)circle node{1};
\draw[radius=.35,fill=white](7,-8)circle node{3};
\draw[radius=.35,fill=white](8,-8)circle node{2}; 

\draw[red,style=thick,fill=white] (3.5,-10.5) rectangle (6.5,-9.5);
\draw[style=thick](4,-10)--(6,-10);
\draw[radius=.35,fill=white](4,-10)circle node{2};
\draw[radius=.35,fill=white](5,-10)circle node{3};
\draw[radius=.35,fill=white](6,-10)circle node{2}; 
\end{tikzpicture}\begin{tikzpicture}[baseline=0cm, scale=0.5]
\end{tikzpicture} & 
\begin{tikzpicture}[baseline=0cm, scale=0.44]
\node (a) at (0,0) {};
\node (b) at (-2,-2) {};
\node (c) at (0,-4) {};
\node (d) at (2,-2) {};
\node (e) at (-5,-6) {};
\node (f) at (5,-6) {};
\draw[style=thick] (b)--(e)node[above,pos=0.5,sloped]{1v2}--(-3,-8)node[above,pos=0.5,sloped]{3w2}--(-5,-10)node[below,pos=0.5,sloped]{\phantom{x}2u3}--(5,-10)node[above,pos=0.5,sloped]{1v3}--(7,-8)node[below,pos=0.5,sloped]{2u1}--(5,-6)node[above,pos=0.5,sloped]{1w2}--(d)node[above,pos=0.5,sloped]{2v3};
\draw[style=thick] (e)--(-7,-8)node[above,pos=0.5,sloped]{2u3}--(-5,-10)node[below,pos=0.5,sloped]{3w2};
\draw[style=thick] (5,-6)--(3,-8)node[above,pos=0.5,sloped]{2u1}--(5,-10)node[below,pos=0.5,sloped]{1w2\phantom{x}};
\draw[style=thick] (a)--(b)node[above,pos=0.5,sloped]{3u1}--(c)node[below,pos=0.5,sloped]{3w1}--(d)node[below,pos=0.5,sloped]{3u1}--(a)node[above,pos=0.5,sloped]{3w1};
\draw[radius=.35,fill=white](0,0)circle node{};
\draw[radius=.35,fill=red](-2,-2)circle node{};
\draw[radius=.35,fill=red](2,-2)circle node{};
\draw[radius=.35,fill=white](0,-4)circle node{};
\draw[radius=.35,fill=red](-5,-6)circle node{};
\draw[radius=.35,fill=white](-7,-8)circle node{};
\draw[radius=.35,fill=white](-3,-8)circle node{};
\draw[radius=.35,fill=red](-5,-10)circle node{};
\draw[radius=.35,fill=red](5,-6)circle node{};
\draw[radius=.35,fill=white](3,-8)circle node{};
\draw[radius=.35,fill=white](7,-8)circle node{};
\draw[radius=.35,fill=red](5,-10)circle node{};
\draw[draw=none,fill= white] (-8.5,-.5) rectangle (-5.5,.5);
\draw[draw=none,fill= white] (8.5,-.5) rectangle (5.5,.5);
\end{tikzpicture}
\end{tabular}

\vspace{1em}

\begin{center}\usetikzlibrary{fit}

    \begin{tikzpicture}[baseline=0cm, scale=0.9,
    vertex/.style={},
    edge_label/.style={
        sloped,   
        midway,   
        auto,    
        font=\small 
    },
    edge_style/.style={
        <->, 
        >=Latex 
    }, every fit/.style={draw,inner sep=-1.5pt}
  ]
  
  \def\hexagonradius{2.1cm}

  \node[vertex] (alpha)  at (120:\hexagonradius) {$323$};
  \node[vertex] (alpha1) at (60:\hexagonradius) {$121$};
  \node[vertex] (beta)   at (0:\hexagonradius) {$131$};
  \node[vertex] (gamma)  at (-60:\hexagonradius) {$232$};
  \node[vertex] (delta)  at (-120:\hexagonradius) {$212$};
  \node[vertex] (alpha2) at (180:\hexagonradius) {$313$}; 
  \draw[edge_style] (alpha) -- node[edge_label, swap] {$3 V_2 1$} (alpha1);
  \draw[edge_style] (alpha1) -- node[edge_label, swap] {$2V_13$} (beta);
  \draw[edge_style] (beta) -- node[edge_label] {$2V_21$} (gamma);
  \draw[edge_style] (gamma) -- node[edge_label] {$1 V_1 3$} (delta);
  \draw[edge_style] (delta) -- node[edge_label] {$3V_22$} (alpha2);
  \draw[edge_style] (alpha2) -- node[edge_label, swap] {$1 V_1 2$} (alpha);
  \node[draw, red, style=thick, fit=(alpha)] {};
  \node[draw, red, style=thick, fit=(alpha1)] {};
  \node[draw, red, style=thick, fit=(alpha2)] {};
  \node[draw, red, style=thick, fit=(beta)] {};
  \node[draw, red, style=thick, fit=(gamma)] {};
  \node[draw, red, style=thick, fit=(delta)] {};
\end{tikzpicture}
\end{center}

\caption{Three organizations of the 3-colorings of $P_3$, with link colorings highlighted red. Top left is coloring graph $\mathcal C_3(P_3)$ with vertex labels indicating the underlying bijection $\Phi\colon V(\C_3(P_3))\to \set{\text{proper 3-colorings of $P_3$}}$; top right depicts the corresponding edge labelings; bottom center presents the labeled link graph organizing the 2-colorings consistent with bipartition $V_1=\set{v},V_2=\set{u,w}$ of $V(P_3)$.}\label{fig:coloring-graph-labeling}
\end{figure}

We use standard notation for common graphs (see \cite{Wes01}): $K_n$, $C_n$, and $P_n$ are the complete, cycle, and path graphs on $n$ vertices, respectively; $K_{r, s}$ is the complete bipartite graph with two parts of cardinality $r$ and $s$; $N_r$ is the graph with $r$ vertices and $0$ edges. The \emph{house graph} is obtained by adding a chord to a $5$-cycle. 

Next, we describe notation related to integer and set partitions. Let $\mathbb{N}$ denote the set of positive integers. For $k\in\mathbb{N}$, let $[k]$ be the set $\{1, 2, \dots, k\}$. A \emph{partition} $\lambda=(\lambda_1, \dots, \lambda_t)$ of $n\in\mathbb{N}$, denoted $\lambda\vdash n$, means that $\lambda_i\in\mathbb{N}$ for all $i$ and $\lambda_1+ \dots + \lambda_t = n$. For example, $(3, 1, 7, 1, 6)\vdash 18$.

For two disjoint subsets $A$ and $B$ of a set $X$, their union $A\cup B$ is denoted by $A\uplus B$ to emphasize disjointness. A \emph{set partition} of $X$ is a tuple $(X_1, \dots, X_t)$ of pairwise disjoint subsets whose union is $X$, so $X=X_1\uplus X_2\uplus \dots \uplus X_t$. By convention, each part $X_i$ is nonempty. It is sometimes convenient to initially define parts that may be empty; such empty parts are then disregarded in the final partition. For instance, a proper $k$-coloring $\alpha\colon V(G)\to [k]$ of a graph $G$ induces a partition $P=(P_1, P_2, \dots, P_k)$ of $V(G)$ where each part $P_c=\alpha^{-1}(c)$ consists of vertices colored $c\in [k]$. If the coloring $\alpha$ uses fewer than $k$ colors, some parts $P_c$ in this partition are empty. The number of nonempty parts in a set partition $P$ is its \emph{length}, $\ell(P)$.

\section{Coloring Graphs at the Chromatic Level May Have Multiple Base Graphs}\label{sec:chromatic-level}

Our main result, established in Section~\ref{sec:unique-if-surplus}, is twofold: first, surplus coloring graphs uniquely determine their base graphs; second, surplus coloring graphs are structurally distinct from $\chi$-coloring graphs. In contrast, two nonisomorphic graphs can produce isomorphic $\chi$-coloring graphs. For instance, all uniquely $k$-colorable graphs share the same $k$-coloring graph, namely the graph $N_{k!}$ consisting of $k!$ isolated vertices. This observation leads to a natural question: can $\mathcal{C}_{\chi(G_1)}(G_1)$ be isomorphic to $\mathcal{C}_{\chi(G_2)}(G_2)$ when $\chi(G_1) \neq \chi(G_2)$? In this section, we affirmatively answer this question by proving Theorem~\ref{thm:townhouse}. We begin by constructing a $3$-chromatic graph with a prescribed number of vertices in its $3$-coloring graph.

\begin{lemma}\label{lem:townhouse} Let $r\geq 1$. There exists a graph $G$ with $\chi(G)=3$ such that $\mathcal{C}_3(G)$ has exactly $6r$ vertices.
\end{lemma}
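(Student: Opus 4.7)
The plan is to construct, for each $r\geq 1$, a specific graph $G_r$ with $\chi(G_r)=3$ and exactly $6r$ proper 3-colorings. The base cases are easy: take $G_1=K_3$ (which has $6=3!$ colorings) and $G_2=K_3$ together with one additional vertex adjacent to just one vertex of the triangle ($\chi=3$ and $6\cdot 2=12$ colorings). For $r\geq 3$, use the townhouse graph $\operatorname{TH}_{r-2}$ pictured above. That $\chi(\operatorname{TH}_n)=3$ for $n\ge 1$ is immediate: the roof triangles give $\chi\ge 3$, and an explicit 3-coloring achieves $\chi\le 3$. What remains is counting 3-colorings.

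Denote the \emph{ceiling} vertices by $u_0=D_1$, $u_i=C_i=D_{i+1}$ for $1\le i\le n-1$, and $u_n=C_n$, and the \emph{floor} vertices by $v_0=A_1$, $v_i=B_i=A_{i+1}$ for $1\le i\le n-1$, and $v_n=B_n$. Each roof triangle $R_iu_{i-1}u_i$ uses all three colors, so $R_i$ is determined by the pair $(u_{i-1},u_i)$. The edge $R_iR_{i+1}$ then forces $u_{i+1}\ne u_{i-1}$, making $u_{i+1}$ the unique color outside $\{u_{i-1},u_i\}$. Iterating, once $(u_0,u_1)$ is chosen with $u_0\ne u_1$ (six options), the entire top is determined; moreover the $u_i$'s cycle through the three colors with period $3$.

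For each fixed top coloring, the floor constraint is $v_i\in\{u_{i-1},u_{i+1}\}$ together with $v_i\ne v_{i-1}$. Encode $v_i$ as $H$ if $v_i=u_{i+1}$ and $L$ if $v_i=u_{i-1}$. Using the period-$3$ identity $u_{i+1}=u_{i-2}$, the case $v_{i-1}=L$ unwinds to $v_{i-1}=u_{i-2}=u_{i+1}$, which forces $v_i=L$; conversely, every pattern $H^{a}L^{n+1-a}$ with $a\in\{0,1,\ldots,n+1\}$ yields a valid floor coloring, so there are exactly $n+2$ floor colorings per top. Multiplying gives $|V(\C_3(\operatorname{TH}_n))|=6(n+2)=6r$ for $n=r-2$. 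The main technical obstacle is this H/L argument: it requires the period-$3$ structure of the $u_i$ sequence (which is precisely where the edges $R_iR_{i+1}$ are essential), plus a careful check that the ``once $L$, always $L$'' dichotomy enumerates exactly the valid floor colorings without duplication.
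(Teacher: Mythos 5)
Your proof is correct and follows the same overall plan as the paper: same base cases ($K_3$ and the paw graph for $r=1,2$), same townhouse family $\operatorname{TH}_{r-2}$ for $r\ge 3$, and the same decomposition of the count into (roof/ceiling choices) $\times$ (floor choices) $= 6\cdot(n+2)$. The only real divergence is in how the floor factor $n+2$ is obtained. The paper sets up a recursion $t_n=t_{n-1}+1$ and verifies it by splitting the $t_{n-1}$ colorings of the shorter floor-path into two types according to the color of the penultimate floor vertex. You instead give a direct, closed-form characterization: writing each floor vertex as $H$ (agrees with $u_{i+1}$) or $L$ (agrees with $u_{i-1}$), the periodicity $u_{i+1}=u_{i-2}$ shows that $L$ propagates, so the valid floor sequences are exactly the $n+2$ monotone words $H^aL^{n+1-a}$, and the $H\to\{H,L\}$ freedom shows that all of these are realized. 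Your argument is a little more explicit (a bijection rather than a recursion), but both are short and essentially interchangeable; the paper's version has the mild advantage of not needing to observe the period-3 identity $u_{i+1}=u_{i-2}$ explicitly, while yours avoids induction entirely.
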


\begin{proof} For the cases $r=1$ and $r=2$, the graphs $K_3$ and $K_{1, 3}+e$ (\emph{the paw graph}) provide the required examples, as can be verified by evaluating their chromatic polynomials at $k=3$. Assume $r\geq 3$. Consider the \emph{townhouse graph} $\operatorname{TH}_{n}$ for $n\ge 1$, which consists of a sequence of $n$ 5-node house graphs in which consecutive houses share a wall and have an additional edge connecting the tops of their gables. By definition, $\operatorname{TH}_1$ is the house graph. Below, we illustrate townhouse graphs for $n = 2, 3, 4$. 

\begin{center}
\begin{tikzpicture}
    \coordinate (A1) at (0,0);
    \coordinate (B1) at (1,0);
    \coordinate (C1) at (1,1);
    \coordinate (D1) at (0,1);
    \coordinate (R1) at (0.5,1.5); 
    \coordinate (A2) at (1,0);
    \coordinate (B2) at (2,0);
    \coordinate (C2) at (2,1);
    \coordinate (D2) at (1,1);
    \coordinate (R2) at (1.5,1.5); 
    \draw (A1) -- (B1) -- (C1) -- (D1) -- cycle;
    \draw (D1) -- (R1) -- (C1);
    \draw (A2) -- (B2) -- (C2) -- (D2) -- cycle;
    \draw (D2) -- (R2) -- (C2);
    \draw (C1) -- (D2);
    \draw (R1) -- (R2);
    \foreach \point in {A1, B1, C1, D1, R1, A2, B2, C2, D2, R2}
        \fill (\point) circle (2pt);
    \node at (1, -0.5) {$\operatorname{TH}_2$};
    \coordinate (A3) at (4,0);
    \coordinate (B3) at (5,0);
    \coordinate (C3) at (5,1);
    \coordinate (D3) at (4,1);
    \coordinate (R3) at (4.5,1.5); 
    \coordinate (A4) at (5,0);
    \coordinate (B4) at (6,0);
    \coordinate (C4) at (6,1);
    \coordinate (D4) at (5,1);
    \coordinate (R4) at (5.5,1.5);
    \coordinate (A5) at (6,0);
    \coordinate (B5) at (7,0);
    \coordinate (C5) at (7,1);
    \coordinate (D5) at (6,1);
    \coordinate (R5) at (6.5,1.5); 
    \draw (A3) -- (B3) -- (C3) -- (D3) -- cycle;
    \draw (D3) -- (R3) -- (C3);
    \draw (A4) -- (B4) -- (C4) -- (D4) -- cycle;
    \draw (D4) -- (R4) -- (C4);
    \draw (A5) -- (B5) -- (C5) -- (D5) -- cycle;
    \draw (D5) -- (R5) -- (C5);
    \draw (C3) -- (D4);
    \draw (C4) -- (D5);
    \draw (R3) -- (R4);
    \draw (R4) -- (R5);
    \foreach \point in {A3, B3, C3, D3, R3, A4, B4, C4, D4, R4, A5, B5, C5, D5, R5}
        \fill (\point) circle (2pt);
    \node at (5.5, -0.5) {$\operatorname{TH}_3$};
    \coordinate (A6) at (9,0);
    \coordinate (B6) at (10,0);
    \coordinate (C6) at (10,1);
    \coordinate (D6) at (9,1);
    \coordinate (R6) at (9.5,1.5); 
    \coordinate (A7) at (10,0);
    \coordinate (B7) at (11,0);
    \coordinate (C7) at (11,1);
    \coordinate (D7) at (10,1);
    \coordinate (R7) at (10.5,1.5); 
    \coordinate (A8) at (11,0);
    \coordinate (B8) at (12,0);
    \coordinate (C8) at (12,1);
    \coordinate (D8) at (11,1);
    \coordinate (R8) at (11.5,1.5);
    \coordinate (A9) at (12,0);
    \coordinate (B9) at (13,0);
    \coordinate (C9) at (13,1);
    \coordinate (D9) at (12,1);
    \coordinate (R9) at (12.5,1.5); 
    \draw (A6) -- (B6) -- (C6) -- (D6) -- cycle;
    \draw (D6) -- (R6) -- (C6);
    \draw (A7) -- (B7) -- (C7) -- (D7) -- cycle;
    \draw (D7) -- (R7) -- (C7);
    \draw (A8) -- (B8) -- (C8) -- (D8) -- cycle;
    \draw (D8) -- (R8) -- (C8);
    \draw (A9) -- (B9) -- (C9) -- (D9) -- cycle;
    \draw (D9) -- (R9) -- (C9);
    \draw (C6) -- (D7);
    \draw (C7) -- (D8);
    \draw (C8) -- (D9);
    \draw (R6) -- (R7);
    \draw (R7) -- (R8);
    \draw (R8) -- (R9);
    \foreach \point in {A6, B6, C6, D6, R6, A7, B7, C7, D7, R7, A8, B8, C8, D8, R8, A9, B9, C9, D9, R9}
        \fill (\point) circle (2pt);
    \node at (11, -0.5) {$\operatorname{TH}_4$};
\end{tikzpicture}
\end{center}

We claim that $\mathcal{C}_3(\operatorname{TH}_n)$ contains precisely $6(n+2)$ vertices. By letting $n = r - 2$, this completes the proof of the lemma. When constructing $\operatorname{TH}_{n}$ from $\operatorname{TH}_{n-1}$, three new vertices $u, v, w$ are added, with $v$ and $w$ serving as the \emph{roof vertices} as pictured below.

\begin{center}
\begin{tikzpicture}
    \coordinate (A1) at (0,0);
    \coordinate (B1) at (1,0);
    \coordinate (C1) at (1,1);
    \coordinate (D1) at (0,1);
    \coordinate (R1) at (0.5,1.5); 
    \coordinate (A2) at (1,0); 
    \coordinate (B2) at (2,0);
    \coordinate (C2) at (2,1);
    \coordinate (D2) at (1,1);
    \coordinate (R2) at (1.5,1.5); 
    \coordinate (A3) at (4,0); 
    \coordinate (B3) at (5,0);
    \coordinate (C3) at (5,1);
    \coordinate (D3) at (4,1);
    \coordinate (R3) at (4.5,1.5);
    \coordinate (A4) at (5,0);
    \coordinate (B4) at (6,0);
    \coordinate (C4) at (6,1);
    \coordinate (D4) at (5,1);
    \coordinate (R4) at (5.5,1.5);
    \draw (A1) -- (B1) -- (C1) -- (D1) -- cycle; 
    \draw (D1) -- (R1) -- (C1); 
    \draw (A2) -- (B2) -- (C2) -- (D2) -- cycle; 
    \draw (D2) -- (R2) -- (C2); 
    \draw (A3) -- (B3) -- (C3) -- (D3) -- cycle; 
    \draw (D3) -- (R3) -- (C3); 
    \draw (A4) -- (B4) -- (C4) -- (D4) -- cycle; 
    \draw (D4) -- (R4) -- (C4); 
    \draw (C1) -- (D2); 
    \draw (C3) -- (D4); 
    \draw (R1) -- (R2); 
    \draw (R3) -- (R4); 
    \node at (3,1) {\dots}; 
    \node at (3,1.5) {\dots};
    \node at (3, 0) {\dots}; 
    \foreach \point in {A1, B1, C1, D1, R1, A2, B2, C2, D2, R2, A3, B3, C3, D3, R3, A4, B4, C4, D4, R4}
        \fill (\point) circle (2pt);

    \node[above] at (R4) {$w$};
    \node[right] at (C4) {$v$};
    \node[right] at (B4) {$u$};

     \draw [decorate,decoration={brace,mirror,amplitude=10pt}]
        ([yshift=-6pt]A1.south west) -- ([yshift=-6pt]A4.south east)
        node [midway, below=10pt] {TH$_{n-1}$};
\end{tikzpicture}
\end{center}

We compute the number of $3$-colorings of $\operatorname{TH}_{n}$. First, there are $3$ choices for the color of vertex $v$ and $2$ choices for the color of vertex $w$. Once these colors are assigned, the colors of all the roof vertices (three per roof) are determined automatically. Hence, the number of $3$-colorings of $\operatorname{TH}_{n}$ equals $6 \cdot t_n$, where $t_n$ represents the number of $3$-colorings of the path graph $P_{n+1}$ with specific constraints described below.

Without loss of generality, suppose $v$ is colored $1$ and $w$ is colored $3$. The \emph{floor vertices} of the houses form a path graph $P_{n+1}$, consisting of $u_{n+1} \mathord{-} u_{n} \mathord{-} \cdots \mathord{-} u_2 \mathord{-} u_1$ where $u=u_1$. The value $t_n$ is the number of $3$-colorings of $P_{n+1}$ such that the color of $u_i$ differs from $i \pmod{3}$ for every $1 \leq i \leq n+1$. We use the set $\{1, 2, 3\}$ as representatives modulo $3$ to correspond to the color labels.

We claim that $t_n = n+2$. To prove this formula by induction on $n$, it suffices to show that $t_1 = 3$ and that $t_n = t_{n-1} + 1$ for all $n \geq 2$.

\textbf{Base case.} For $n = 1$, we count the colorings $c$ of $P_2 = u_2 \mathord{-} u_1$ such that $c(u_1) \neq 1$ and $c(u_2) \neq 2$. If $c(u_2) = 3$, then $c(u_1) = 2$ is the only valid choice. If $c(u_2) = 1$, then $c(u_1)$ can be either $2$ or $3$. Thus, there are $3$ valid colorings, giving $t_1 = 3$.
  
\textbf{Recursive formula.} By definition, $t_{n-1}$ is the number of valid colorings of $P_n = u_{n+1} \mathord{-} u_n \mathord{-} \cdots \mathord{-} u_2$. We extend these colorings to $P_{n+1}$ while ensuring that $c(u_1) \neq 1$. The $t_{n-1}$ valid colorings of $P_n$ split into two groups:

\begin{itemize}
    \item \textbf{Type A:} $c(u_2) = 3$. In this case, $c(u_1) = 2$ is the only possible extension, providing a unique valid coloring of $P_{n+1}$. 
    \item \textbf{Type B:} $c(u_2) = 1$. The constraints force a specific coloring pattern on $u_3, \dots, u_{n+1}$, determined by $c(u_i) = (i-1) \pmod{3}$. However, for $u_1$, we have two possible extensions: $c(u_1) = 2$ or $c(u_1) = 3$.
\end{itemize}

Thus, there is one additional valid coloring of Type B, giving $t_n = t_{n-1} + 1$. By induction, $t_n = n + 2$ for all $n \geq 1$. Consequently, $\operatorname{TH}_n$ has a total of $6(n+2)$ distinct $3$-colorings for each $n \geq 1$. \end{proof}

The proof of Lemma~\ref{lem:townhouse} can be adapted to fully describe the $3$-coloring graph of townhouse graphs. For $n\geq 1$, the graph $\mathcal{C}_3(\operatorname{TH}_n)$ is isomorphic to $6P_{n+2}$, a disjoint union of $6$ path graphs, each on $n+2$ vertices.

\begin{corollary}\label{cor:3-chromatic-isolated} For each $r\geq 1$, there exists a connected graph $G$ with $\chi(G)=3$ such that $\mathcal{C}_3(G)\cong N_{6r}$, the edgeless graph on $6r$ vertices.
\end{corollary}

\begin{proof} The cases $r = 1$ and $r = 2$ are addressed by considering the graphs $K_3$ and $F_2$ (the \emph{friendship graph}), which consists of two copies of $K_3$ joined at a common vertex. 

Assume $r\geq 3$ and set $n=r-2\geq 1$. We modify the townhouse graph $\operatorname{TH}_n$ from the proof of Lemma~\ref{lem:townhouse} by attaching new \emph{basement vertices} to each house to ensure that every vertex belongs to a triangle. This construction yields a new graph $\underline{\operatorname{TH}_{n}}$, illustrated below for $n = 3$. 

\begin{center}
    
\begin{tikzpicture}
    \coordinate (A3) at (4,0);
    \coordinate (B3) at (5,0);
    \coordinate (C3) at (5,1);
    \coordinate (D3) at (4,1);
    \coordinate (R3) at (4.5,1.5); 
    \coordinate (A4) at (5,0);
    \coordinate (B4) at (6,0);
    \coordinate (C4) at (6,1);
    \coordinate (D4) at (5,1);
    \coordinate (R4) at (5.5,1.5); 
    \coordinate (A5) at (6,0);
    \coordinate (B5) at (7,0);
    \coordinate (C5) at (7,1);
    \coordinate (D5) at (6,1);
    \coordinate (R5) at (6.5,1.5); 
    \coordinate (Q1) at (4.5,-0.5); 
    \coordinate (Q2) at (5.5,-0.5); 
    \coordinate (Q3) at (6.5,-0.5); 
    \draw (A3) -- (B3) -- (C3) -- (D3) -- cycle; 
    \draw (D3) -- (R3) -- (C3); 
    \draw (A4) -- (B4) -- (C4) -- (D4) -- cycle; 
    \draw (D4) -- (R4) -- (C4); 
    \draw (A5) -- (B5) -- (C5) -- (D5) -- cycle; 
    \draw (D5) -- (R5) -- (C5); 
    \draw (C3) -- (D4);
    \draw (C4) -- (D5); 
    \draw (R3) -- (R4); 
    \draw (R4) -- (R5); 
    \draw (A3) -- (Q1) -- (B3); 
    \draw (A4) -- (Q2) -- (B4);
    \draw (A5) -- (Q3) -- (B5); 
    \foreach \point in {A3, B3, C3, D3, R3, A4, B4, C4, D4, R4, A5, B5, C5, D5, R5, Q1, Q2, Q3}
        \fill (\point) circle (2pt);
    \node at (5.5, -1) {$\underline{\operatorname{TH}_3}$};
\end{tikzpicture}
\end{center}

The addition of the basement vertices does not affect the number of $3$-colorings as their colors are determined automatically. Consequently, $\mathcal{C}_3(\underline{\operatorname{TH}_n})$ still contains $6(n+2)=6r$ vertices. Furthermore, because every vertex participates in a triangle, $\mathcal{C}_3(\underline{\operatorname{TH}_n})$ has no edges, completing the proof. \end{proof}

\begin{corollary}\label{cor:k-chromatic-isolated} For each $r\geq 1$ and $k\geq 3$, there exists a connected graph $G$ with $\chi(G)=k$ such that $\mathcal{C}_k(G)\cong N_{k!\cdot r}$.
\end{corollary}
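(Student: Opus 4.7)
The plan is to lift the construction from Corollary~\ref{cor:3-chromatic-isolated} to chromatic number $k$ by joining with a $(k-3)$-clique. Let $H$ be the connected graph from Corollary~\ref{cor:3-chromatic-isolated} with $\chi(H)=3$ and $\mathcal{C}_3(H)$ consisting of $6r$ isolated vertices (so $|V(\mathcal{C}_3(H))|=6r=3!\cdot r$), and define
\[
G \colonequals H \vee K_{k-3},
\]
i.e., take the disjoint union of $H$ and a clique on $k-3$ fresh vertices, and add every edge between $V(H)$ and $V(K_{k-3})$. For $k=3$ this returns $H$ itself, so the case $k=3$ already follows from the previous corollary.

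First I would handle the three easy structural properties. Connectedness of $G$ is immediate since $H$ is connected and the join only adds edges. For the chromatic number, the standard identity $\chi(G_1 \vee G_2)=\chi(G_1)+\chi(G_2)$ gives $\chi(G)=3+(k-3)=k$. Next I would enumerate the $k$-colorings of $G$: the clique $K_{k-3}$ forces $k-3$ pairwise distinct colors on its vertices, and since every vertex of $H$ is adjacent to every vertex of $K_{k-3}$, the colors on $V(H)$ are drawn from the remaining $3$-element palette; because $\chi(H)=3$, every $3$-coloring of $H$ must in fact use all three such colors. Conversely, any choice of an ordered injection $K_{k-3}\hookrightarrow [k]$ (there are $k!/3!$ of these) together with any $3$-coloring of $H$ on the complementary palette (there are $6r$ of these) yields a valid $k$-coloring of $G$. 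Hence $|V(\mathcal{C}_k(G))|=\frac{k!}{3!}\cdot 6r=k!\cdot r$.

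The one point that needs a careful check is that $\mathcal{C}_k(G)$ is edgeless, i.e., every vertex is \emph{frozen} in every $k$-coloring. For a base vertex $v\in V(H)$, its neighbors already include all $k-3$ clique vertices (contributing $k-3$ distinct colors disjoint from the palette used on $H$), together with the two other vertices of some triangle in $H$ containing $v$ (contributing the two base colors other than $c(v)$); this totals $k-1$ distinct neighbor colors, so $v$ is forced to keep color $c(v)$. For a clique vertex $w\in V(K_{k-3})$, its neighbors include the remaining $k-4$ clique vertices ($k-4$ distinct colors) and \emph{all} of $V(H)$, which collectively carry all $3$ base colors; again $w$ sees $k-1$ distinct colors. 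Thus no vertex can change color, and $\mathcal{C}_k(G)\cong N_{k!\cdot r}$, as required.

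There is no serious obstacle here; the only place one has to be attentive is the frozen-vertex verification above, particularly the boundary case $k=4$ where $k-4=0$ and the $3$ colors witnessed on $V(H)$ carry the entire argument for the clique vertex. With the counting and the freezing together, $G$ satisfies all conclusions of the corollary.
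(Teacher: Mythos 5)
Your proof is correct, and the construction is the same one the paper uses: the paper proceeds by induction, at each step replacing $G$ by $G\vee K_1$ starting from the 3-chromatic graph of Corollary~\ref{cor:3-chromatic-isolated}, which unrolls exactly to your $H\vee K_{k-3}$. The difference is purely presentational. The paper's inductive step is terse (it asserts without elaboration that $\mathcal{C}_{k+1}(G\vee K_1)$ consists of $(k+1)!\cdot r$ isolated vertices), whereas you make the underlying facts explicit: the clique absorbs $k-3$ colors, the remaining $3$-color palette on $H$ is forced, the count factors as $\tfrac{k!}{3!}\cdot 6r$, and each vertex sees $k-1$ distinct neighbor colors and is therefore frozen. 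One small simplification worth noting: for the $H$-side vertices you do not actually need the triangle structure of $H$ at this point in the argument; it is enough to observe that they are confined to the $3$-color palette, within which $\mathcal{C}_3(H)$ being edgeless already freezes them, and the $k-3$ clique colors are unavailable by adjacency. That said, invoking the triangles (which $H$ was built to have) is also a valid and self-contained way to see it.
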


\begin{proof} We proceed by induction on $k$. The base case $k=3$ follows from Corollary~\ref{cor:3-chromatic-isolated}. For the inductive step, assume there exists a graph $G$ with $\chi(G) = k$ such that $\mathcal{C}_k(G)$ consists of $k! \cdot r$ isolated vertices. Consider a new graph $G'$ formed by adding a new vertex $v$ and connecting $v$ to all vertices of $G$. Then $\chi(G')=k+1$ and $\mathcal{C}_{k+1}(G')$ consists of $(k+1)\cdot (k!\cdot r)=(k+1)!\cdot r$ isolated vertices.
\end{proof}

We are now ready to prove one of the theorems from the introduction.

\begin{proof}[Proof of Theorem~\ref{thm:townhouse}] The case $\chi_1=\chi_2=k$, mentioned at the beginning of this section, is straightforward. Taking $G_1=K_k$ and $G_{2}=K_{k+1}-e$, we obtain $\mathcal{C}_{k}(G_1)\cong \mathcal{C}_{k}(G_2)\cong N_{k!}$. Thus, we assume $\chi_2>\chi_1\geq 3$. Let $r=\dfrac{\chi_2!}{\chi_1!}\in\mathbb{N}$. By Corollary~\ref{cor:k-chromatic-isolated}, there exists a graph $G_1$ such that $\chi(G_1)=\chi_1$ and $\mathcal{C}_{\chi_1}(G_1)$ consists of $\chi_1!\cdot r = \chi_2!$ isolated vertices. Let $G_2=K_{\chi_2}$ which satisfies $\chi(G_2)=\chi_2$, and $\mathcal{C}_{\chi_2}(G_2)$ also consists of $\chi_2!$ isolated vertices. As $\chi(G_1)\neq \chi(G_2)$, the graphs $G_1$ and $G_2$ are nonisomorphic, but $\mathcal{C}_{\chi_1}(G_1)\cong \mathcal{C}_{\chi_2}(G_2)$. \end{proof}

We discuss another consequence of Corollary~\ref{cor:k-chromatic-isolated}. While $\mathcal{C}_{\chi(G)}(G)$ does not generally allow for the unique reconstruction of the base graph $G$, it is worth considering what information about the base graph can still be derived from its $\chi$-coloring graph. The next observation, which strengthens Theorem~\ref{thm:townhouse}, shows that the $\chi$-coloring graph cannot even detect connectivity of its base graph. 

\begin{theorem}
For $\chi_2\geq \chi_1\geq 3$, there exists a disconnected $\chi_1$-chromatic graph $G_1$ and a connected $\chi_2$-chromatic graph $G_2$ such that $\mathcal{C}_{\chi_1}(G_1)\cong \mathcal{C}_{\chi_2}(G_2)$. 
\end{theorem} 

\begin{proof}
By Corollary~\ref{cor:k-chromatic-isolated} applied with $k=\chi_1$ and $r=\chi_2!$, there exists a graph $H$ with $\chi(H)=\chi_1$ such that $\mathcal{C}_{\chi_1}(H)\cong N_{\chi_1!\cdot \chi_2!}$. Consider the disjoint union graph $G_1=H\uplus H$, which is disconnected, and satisfies $\chi(G_1)=\chi_1$ and $\mathcal{C}_{\chi_1}(G_1) \cong N_{\chi_1!^2 \chi_2!^2}$. By Corollary~\ref{cor:k-chromatic-isolated} applied with $k=\chi_2$ and $r=\chi_2!\cdot \chi_1!^2$, there exists a connected graph $G_2$ with $\chi(G_2)=\chi_2$ such that $\mathcal{C}_{\chi_2}(G_2)\cong N_{\chi_2!\cdot (\chi_2!\cdot \chi_1!^2)} = N_{\chi_1!^2 \chi_2!^2}$. 
\end{proof}

These results show that, in general, a graph cannot be uniquely reconstructed from its $\chi$-coloring graph; however, exceptions exist. Our next result demonstrates that the $3$-coloring graph of the $5$-cycle is unique among all $\chi$-coloring graphs.

\begin{theorem}\label{thm:C3(C5)}
If $G$ is a graph satisfying $\mathcal{C}_{\chi(G)}(G)\cong\mathcal{C}_3(C_5)$, then $G\cong C_5$.
\end{theorem}
\begin{proof}
    Observe that $\C \colonequals \mathcal{C}_3(C_5)$ is the disjoint union of two copies of $C_{15}$. For any graph $H$, the number $\abs{V(\C_k(H))}$ is divisible by $k!$ when $k=\chi(H)$. Since $\abs{V(\C_{\chi(G)}(G))}=\abs{V(\C)}=30$ is not divisible by $k!$ for $k\geq 4$, it follows that $\chi(G)\leq 3$. Moreover, since $\abs{V(\C)}=30$ is not a power of $2$, we have $\chi(G)>2$. Thus, $\chi(G)=3$. 

     First, we show that $|V(G)|\geq 5$. As $G$ is \emph{not} bipartite, $G$ contains an odd cycle. If $G$ contained a triangle, then $\C$ would have at least $3!=6$ connected components; indeed, a $3$-coloring of a triangle is locked and cannot change within a connected component of $\C$. However, $\C$ has only two components. Thus, $G$ contains an $\ell$-cycle where $\ell\geq 5$, so $|V(G)|\geq 5$.
    
    Consider one of the $15$-cycles in $\C$. Let $\{v_1, \dots, v_r\}\subseteq V(G)$ be the subset of vertices in $G$ that achieve at least two different colors in the chosen 15-cycle in $\C$. Let $\lambda_i\geq 2$ be the number of times vertex $v_i$ changes color in this cycle, so that $(\lambda_1, \dots, \lambda_r)$ forms an integer partition of $15$. We claim that $\lambda_i\geq 3$ for every $i$. If $\lambda_j=2$ for some $j$, then vertex $v_j$ is recolored (in this $15$-cycle) between only two colors, say $1\leftrightarrow 2$. Permuting these two colors with the third available color generates two \emph{additional} $15$-cycles in $\mathcal{C}$: one corresponding to $v_j$ switching between $1\leftrightarrow 3$, and the other corresponding to $v_j$ switching between $2\leftrightarrow 3$. Together with the original $15$-cycle, this would force $|V(\mathcal{C})|\ge 3\cdot 15 = 45$, a contradiction.
    
   Combining $\lambda_i\geq 3$ for all $i$ and $\lambda_1+\dots + \lambda_r=15$, we get $r\leq 5$. We claim that $r=5$. If $r\leq 4$, then $|V(G)|\geq 5$ implies that there is a vertex $u\in V(G)$ whose color is fixed in the $15$-cycle, say color $1$. Changing its color to $2$ or $3$ results in two additional $15$-cycles, implying $|V(\C)|\geq 45$, a contradiction. 

Thus, $r=5$, which combined with $\lambda_i\geq 3$ for all $i$, forces $\lambda_i=3$ for $1\leq i\leq 5$. We claim that $|V(G)|=5$. Indeed, any vertex $w\in V(G)\setminus \{v_1, \dots, v_5\}$ would have a fixed color in the $15$-cycle, say color $1$. Changing its color to $2$ or $3$ results in two additional $15$-cycles, implying $|V(\C)|\geq 45$, a contradiction. Therefore, $|V(G)|=5$ and since $G$ contains an $\ell$-cycle with $\ell\geq 5$, we have $G\cong C_5$. \end{proof}

Later, Theorem~\ref{thm:main} will show that $\mathcal{C}_3(C_5)\not\cong \mathcal{C}_k(G)$ for any graph $G$ with $k>\chi(G)$. Combining this fact with Theorem~\ref{thm:C3(C5)}, we conclude that $C_5$ is the only graph having a coloring graph isomorphic to $\mathcal{C}_3(C_5)$. More generally, it seems difficult to characterize graphs $G$ that can be uniquely reconstructed from $\mathcal{C}_{\chi(G)}(G)$.

\section{Any Coloring Graph With Surplus Colors Has a Unique Base Graph}\label{sec:unique-if-surplus}

In this section, we establish Theorem~\ref{thm:main-intro} (our main result) by proving Theorems~\ref{thm:reconstruction} and~\ref{thm:main}, which respectively show that we can reconstruct base graphs from their coloring graphs with surplus colors, and that $\chi$-coloring graphs cannot be isomorphic to surplus coloring graphs. These results rely on the notion of a {\em link $k$-coloring} (see Definition~\ref{def:link-coloring}). We show that link $k$-colorings of the base graph correspond to {\em abstract link vertices} of a coloring graph, which are vertices that have certain structural properties identified by Algorithm~\ref{algo:link-vertex}. Theorem~\ref{thm:reconstruction} shows that for any $\C$, there is at most one pair $(G, k)$ with $k>\chi(G)$ such that $\C\cong\mathcal C_{k}(G)$. A useful strategy in our proofs is to extract information from link colorings and their corresponding abstract link vertices. 

Showing that $\C_k(G)\cong \C_{\chi'}(G')$ is impossible when $k>\chi(G)$ and $\chi'=\chi(G')$ (Theorem~\ref{thm:main}) requires several new ideas. The presence of abstract link vertices in $\C\cong \C_k(G)\cong \C_{\chi'}(G')$ imposes specific constraints on how $G$ can relate to a potential $G'$. The first step is to show that any such $G'$ must contain the connected components of $G$ as vertex-disjoint induced subgraphs. Next, Algorithm~\ref{algo:partitions} shows that at each abstract link vertex, this subgraph of $G$ in $G'$ must be colored according to the partition of the coloring in the surplus coloring case. Algorithm~\ref{algo:LLG} then extracts from $\C$ an auxiliary structure, termed \emph{labeled link graph}, in a base-graph agnostic manner. These algorithms explain how different abstract link vertices in $\mathcal{C}$ are interconnected. Finally, interpreting the shared graph $\mathcal{C}$ as $\mathcal{C}_{\chi'}(G')$ requires it to contain strictly more abstract link vertices than are present when interpreting it as $\mathcal{C}_{k}(G)$, contradicting the possibility that $\mathcal{C}_k(G)\cong  \mathcal{C}_{\chi'}(G')$. A flowchart (Figure~\ref{fig:flowchart}) at the end of the paper shows the progression of Section~\ref{sec:unique-if-surplus}.

All of our algorithms take a graph $\mathcal C$ as input and seek to interpret $\mathcal C$ as a coloring graph.
We briefly note that the runtime of these algorithms is polynomial in the number of vertices of $\mathcal C$ except for a graph isomorphism check at the end of Algorithm~\ref{algo:link-vertex}. However, the order of a coloring graph is exponential in the order of its base graph in general, so even excluding the graph isomorphism check, our algorithms are not efficient with respect to the base graph.

\subsection{Reconstructing the base graph via link vertices}\label{subsect:reconstruction} This subsection proves Theorem~\ref{thm:reconstruction}: if an abstract coloring graph $\mathcal{C}$ is isomorphic to $\mathcal{C}_k(G)$ for some graph $G$ and an integer $k>\chi(G)$, then this pair $(G, k)$ is unique and algorithmically recoverable from $\mathcal{C}$. The core idea is to identify abstract link vertices (Definition~\ref{def:link-vertex}) within the graph $\mathcal{C}$. We first define the corresponding concept in the base graph, the link $k$-coloring (Definition~\ref{def:link-coloring}), a coloring whose image size within each connected component is minimal.

\begin{definition}\label{def:link-coloring}
    Given a graph $G$ with connected components $H_1,\dots,H_d$  and an integer $k>\chi(G)$, a \emph{link~$k$-coloring of $G$} is a $k$-coloring $\alpha_G\colon V(G)\to [k]$ such that the image of $V(H_i)$ under $\alpha_G$ has size $\chi(H_i)$ for all $i\in[d]$. 
\end{definition}

Equivalently, a link $k$-coloring maximizes the number of common free colors for all vertices within each connected component.

We use Algorithm~\ref{algo:link-vertex} to find vertices in an abstract graph $\mathcal{C}$ that correspond to link $k$-colorings, each having local structural properties in $\mathcal C$ that imply an adjacency matrix that defines an underlying base graph. When $\mathcal{C}\cong \mathcal{C}_k(G)$ with $k>\chi(G)$, this process succeeds and allows us to reconstruct the unique pair $(G, k)$. The vertices successfully identified by this algorithm are called \emph{abstract link vertices}, formally defined in Definition~\ref{def:link-vertex}.

\begin{algorithm}[Reconstruction and Link Vertex Identification]\label{algo:link-vertex} 
    Given a graph $\C$, identify a subset of vertices $A\subseteq V(\C)$, a graph $G$, and an integer $k$ as follows:
\begin{enumerate}
    \item\label{step:clique-neighborhood} Let $A$ be the set of vertices $\alpha\in V(\C)$ where the neighborhood of $\alpha$ is a disjoint union of cliques. If $A\ne V(\mathcal C)$, abort. Otherwise, for each $\alpha\in V(\mathcal C)$, let $r$ be the number of cliques in its neighborhood, and label them arbitrarily $v_1, \dots, v_r$.
    \item\label{step:all-vertices-free} Let $n$ be the maximum number of cliques in the neighborhood of any vertex in $A$. Remove from $A$ any vertex that does not attain this maximum (i.e., if $r<n$).
    \item\label{step:all-edges-have-common-color} For each $\alpha\in A$, let $t_1,\dots,t_n$ be the sizes of the $n$ cliques in its neighborhood. Let $m_\alpha$ be the number of pairs $1\le i<j\le n$ with strictly fewer than $t_it_j$ squares spanning the $i$th and $j$th clique in the neighborhood of $\alpha$. Let $m$ be the maximum value of $m_\alpha$ over all $\alpha\in A$. Remove from $A$ any vertex $\alpha$ for which $m_{\alpha}<m$.
    \item\label{step:adj-matrix} For each $\alpha\in A$, let $M_\alpha\in \set{0,1}^{n\times n}$ be the matrix such that $M_{ij}=1$ if and only if $i\ne j$ and there are strictly fewer than $t_it_j$ squares spanning the $i$th and $j$th clique in the neighborhood of $\alpha$.
    \item\label{step:find-common-colors} For each $\alpha\in A$ and connected component $H_\ell$ induced by $M_\alpha$, let $f_\ell$ be the maximum number of ways to select one vertex from each clique in the neighborhood of $\alpha$ that corresponds to a vertex in $H_\ell$ such that there is no square spanning the vertices corresponding to $v_i,v_j$ for any $i,j$ with $M_{i,j}=1$. For each of these $f_\ell$ sets of $\abs{V(H_\ell)}$ neighbors of $\alpha$, label these edges with a distinct $c\in[f_\ell]$. 
    \item\label{step:max-common-colors} Let $f$ be the maximum value of $\sum_\ell f_\ell$ over all $\alpha\in A$. Remove from $A$ any $\alpha$ for which $\sum_\ell f_\ell < f$.
    \item\label{step:formula-for-k} Let $G$ be the graph with connected components $H_1,\dots,H_d$, as determined by an arbitrary $M_{\alpha}$ from Step~\ref{step:adj-matrix} interpreted as an adjacency matrix. Calculate $k=\frac 1 d \cdot \left(f+\sum_{\ell\in[d]}\chi(H_\ell)\right)$, where $f$ is the maximum value from Step~\ref{step:max-common-colors}. 
    \item\label{step:alg1-return} If $\C\not\cong \C_k(G)$, abort. Otherwise, return vertex set $A$, graph $G$, and integer $k$.
\end{enumerate}
\end{algorithm}

\begin{definition}\label{def:link-vertex}
    Given a graph $\C$, an \emph{abstract link vertex of $\C$} is any vertex $\alpha$ belonging to the subset $A\subseteq V(\C)$ returned by Algorithm~\ref{algo:link-vertex}. 
    In other words, an abstract link vertex is one that survives Step~\ref{step:max-common-colors}, provided the algorithm successfully identifies a graph $G$ and an integer $k$ such that $\C\cong\C_k(G)$.
\end{definition}

The next lemma bridges link $k$-colorings and abstract link vertices when $\mathcal{C}\cong \mathcal{C}_k(G)$ for $k > \chi(G)$.

\begin{lemma}\label{lem:link-equivalence}
    For a graph $G$ and $k>\chi(G)$, there is a natural bijection between the link $k$-colorings of $G$ and the abstract link vertices of $\C_k(G)$.
\end{lemma}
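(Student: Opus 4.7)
The plan is to trace Algorithm~\ref{algo:link-vertex} running on $\C = \C_k(G)$ via the natural bijection $\Phi_{G,k}$, showing that a $k$-coloring $\alpha$ of $G$ survives each filtering step if and only if it satisfies an increasingly specific property, culminating in ``link $k$-coloring'' after Step~\ref{step:max-common-colors}.

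First I would observe that the neighborhood of any vertex $\Phi_{G,k}(\alpha)$ in $\C_k(G)$ decomposes naturally by which $v \in V(G)$ is recolored: two neighbors recoloring the same $v$ differ from each other only at $v$ and so are adjacent, while two neighbors recoloring distinct vertices differ in two coordinates and so are non-adjacent. Thus every vertex of $\C$ passes Step~\ref{step:clique-neighborhood}, with one clique per $v_i \in V(G)$ admitting a recoloring, each of size $t_i = k - |N_{v_i}(\alpha)| - 1$, where $N_{v_i}(\alpha)$ is the set of colors $\alpha$ assigns to neighbors of $v_i$. A link $k$-coloring makes every $v_i$ recolorable, because the component $H_\ell$ containing $v_i$ uses only $\chi(H_\ell) \le \chi(G) < k$ colors, forcing $|N_{v_i}(\alpha)| + 1 < k$. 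Consequently the max in Step~\ref{step:all-vertices-free} is $n = |V(G)|$, and the surviving colorings are precisely those in which every vertex of $G$ has at least one free color.

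Next I would carry out the square-counting of Step~\ref{step:all-edges-have-common-color}. A short case analysis shows that in any 4-cycle $\alpha - \beta_i - \gamma - \beta_j - \alpha$ with $\beta_i$ in clique $v_i$ and $\beta_j$ in clique $v_j$, the fourth vertex $\gamma$ must equal $\alpha$ except at $v_i$ (taking $\beta_i$'s new color) and at $v_j$ (taking $\beta_j$'s new color); any other combination forces $\gamma$ to differ from $\beta_i$ or $\beta_j$ at more than one vertex or to coincide with $\alpha$. Such a $\gamma$ is a valid $k$-coloring except when $v_iv_j \in E(G)$ and $\beta_i(v_i) = \beta_j(v_j)$. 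Hence the number of squares spanning cliques $v_i, v_j$ is strictly less than $t_it_j$ exactly when $v_iv_j \in E(G)$ and some color $c$ lies outside $N_{v_i}(\alpha) \cup N_{v_j}(\alpha)$, which is automatic for link colorings since $|N_{v_i}(\alpha) \cup N_{v_j}(\alpha)| \le \chi(H_\ell) < k$ but can fail in general. Consequently $m_\alpha \le |E(G)|$ for every $\alpha$, with equality achieved by link colorings, so $m = |E(G)|$ and every surviving $\alpha$ has $M_\alpha$ equal to an adjacency matrix of $G$ under the Step~\ref{step:clique-neighborhood} labeling.

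Finally for Steps~\ref{step:find-common-colors} and~\ref{step:max-common-colors}, the constraint ``no square spanning the vertices corresponding to $v_i, v_j$'' for adjacent pairs in $H_\ell$ forces $\beta_i(v_i) = \beta_j(v_j)$, and since $H_\ell$ is connected every valid selection uses one common new color $c$ across $V(H_\ell)$; this $c$ must avoid $N_{v_i}(\alpha) \cup \set{\alpha(v_i)}$ for every $v_i \in V(H_\ell)$, i.e.\ $c \notin \alpha(V(H_\ell))$, yielding $f_\ell = k - |\alpha(V(H_\ell))|$. Then $\sum_\ell f_\ell = dk - \sum_\ell |\alpha(V(H_\ell))|$, which is maximized precisely when each $|\alpha(V(H_\ell))| = \chi(H_\ell)$, which is by definition a link $k$-coloring. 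Since $\Phi_{G,k}$ is already a bijection between $k$-colorings of $G$ and $V(\C_k(G))$, its restriction to link colorings produces exactly the abstract link vertices. I expect the most delicate part of the argument to be the square-counting case analysis, to ensure that no unanticipated 4-cycles are missed and that the ``common color'' constraint really propagates through every connected component without slack.
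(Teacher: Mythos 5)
Your proof is correct and takes essentially the same approach as the paper's, tracing Algorithm~\ref{algo:link-vertex} step by step on $\C_k(G)$ and showing the survivors of each filtering step are exactly the colorings satisfying an increasingly strong ``link-ness'' property. You supply some extra explicit computation (the clique sizes $t_i = k - |N_{v_i}(\alpha)| - 1$, the case analysis of spanning $4$-cycles, and the formula $f_\ell = k - |\alpha(V(H_\ell))|$) that the paper asserts more briefly, but the underlying argument is the same.
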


\begin{proof} We show that the natural bijection $\Phi\colon V(\C_k(G))\to\set{\text{$k$-colorings of $G$}} $ restricts to a bijection between the abstract link vertices of $\C_k(G)$ and the link $k$-colorings of $G$ whenever $k>\chi(G)$. It suffices to prove that the image of the abstract link vertices under $\Phi$ contains only link $k$-colorings and that every link $k$-coloring is achieved by some abstract link vertex. Recall that using the $\alpha\mapsto\alpha_G$ notation from Section~\ref{sec:def}, the symbol $\alpha$ represents an element of $V(\C_k(G))$ and $\alpha_G=\Phi(\alpha)$ is the corresponding $k$-coloring of base graph $G$. 

Let $\alpha_G$ be a link $k$-coloring of $G$. Suppose $G$ has $d$ connected components $H_1,\dots,H_d$. We show that $\alpha=\Phi^{-1}(\alpha_G)$ remains in set $A$ after all steps of Algorithm~\ref{algo:link-vertex}. The neighborhood of $\alpha$ in $\C_k(G)$ consists of the disjoint union of $n=\abs{V(G)}$ cliques, because $k>\chi(G)$ implies that each vertex of $G$ has at least one available color in a link $k$-coloring. Moreover, all vertices in $V(\C)$ satisfy the condition in Step~\ref{step:clique-neighborhood} for any coloring graph $\C$. Indeed, the edges of any clique in a coloring graph correspond to a single vertex changing color, and any coloring has a finite number of vertices with free colors, yielding a neighborhood of cliques. In particular, no vertex of $\C$ can have a neighborhood with more than $n$ cliques, so $\alpha$ survives Step~\ref{step:all-vertices-free}. 

Next, consider Step~\ref{step:all-edges-have-common-color}. For every edge $v_iv_j\in E(G)$, the corresponding cliques in the neighborhood of $\alpha$ have fewer than $t_it_j$ squares. This is because $v_i$ and $v_j$ have at least one common free color in a link $k$-coloring, and a square corresponding to assigning this same color to both vertices is necessarily missing since $v_iv_j\in E(G)$. Conversely, for any $v_iv_j\not\in E(G)$, the corresponding cliques have exactly $t_it_j$ squares, as any combination of free colors for $v_i$ and $v_j$ produces a valid $k$-coloring. Hence, $m_\alpha=|E(G)|$ for this $\alpha$. Additionally, no $\beta\in V(\C)$ can have $m_\beta>\abs{E(G)}$, because this would require some $i,j$ with fewer than $t_it_j$ squares but $v_iv_j\not\in E(G)$, a contradiction. Therefore, $\alpha$ achieves the maximum $m$ and survives Step~\ref{step:all-edges-have-common-color}. 

Now consider Step~\ref{step:max-common-colors}. By definition of link $k$-coloring, for each $\ell\in[d]$, every vertex in $V(H_\ell)$ shares exactly $k-\chi(H_\ell)$ common free colors in $\alpha_G$. Each of these common free colors is associated with $n_\ell=\abs{V(H_\ell)}$ neighbors of $\alpha$ which pairwise have spanning squares if and only if the pair of corresponding vertices in $G$ are non-adjacent. Thus, for each component $H_{\ell}$, the value of $f_{\ell}$ calculated in Step~\ref{step:find-common-colors} is exactly $f_\ell=k-\chi(H_\ell)$. Moreover, for any $\beta\in V(\C_k(G))$, a missing square between any two neighbors of  $\beta$ implies that the corresponding two vertices of $G$ take on the same color moving from $\beta$ to the respective neighbors. Thus, maximality of the definition of link $k$-colorings ensures that $\alpha$ survives Step~\ref{step:max-common-colors} and is included in the set $A$ returned by the algorithm.

Conversely, we show that any abstract link vertex $\alpha$ in $\mathcal{C}_k(G)$ corresponds to some link $k$-coloring $\alpha_G$ of $G$. For $\alpha$ to be in the final set $A$:
\begin{itemize}
    \item Surviving Step~\ref{step:all-vertices-free} means that \emph{every} vertex $v_i\in V(G)$ has at least one free color in $\alpha_G$.
    \item Surviving Step~\ref{step:all-edges-have-common-color} implies $m_{\alpha}=|E(G)|$. Hence, for every edge $v_i v_j\in E(G)$, the vertices $v_i$ and $v_j$ share at least one common free color in $\alpha_G$.
    \item Surviving Step~\ref{step:max-common-colors} requires $\sum_{\ell} f_{\ell}$ to be maximal. This corresponds precisely to colorings where each component $H_{\ell}$ uses exactly $\chi(H_{\ell})$ colors, maximizing the number of common free colors within that component. By Definition~\ref{def:link-coloring}, we deduce that $\alpha_G$ is a link $k$-coloring. 
\end{itemize}
Therefore, $\Phi$ restricted to the set of abstract link vertices is a bijection onto the set of link $k$-colorings. 
\end{proof}

Figure~\ref{fig:alg1-attrition} presents three non-link colorings that are rejected at different steps of Algorithm~\ref{algo:link-vertex} and one link coloring that survives. The labels within braces next to a given vertex indicate its free available colors.

\begin{figure}[ht]
\centering
\begin{tabular}{cc}
\begin{tikzpicture}[scale=1,
  every node/.style={draw=none, fill=none, minimum size=8mm, font=\small}]
  \node at (0,0) {$\delta_G$};
  \node[fill=red!30,    circle, draw] (v1) at ( 90:1)  {1};
  \node[fill=green!30,  circle, draw] (v2) at ( 18:1)  {2};
  \node[fill=blue!30,   circle, draw] (v3) at (-54:1)  {3};
  \node[fill=purple!30, circle, draw] (v4) at (-126:1) {4};
  \node[fill=green!30,  circle, draw] (v5) at ( 162:1) {2};
  \draw (v1) -- (v2) -- (v3) -- (v4) -- (v5) -- (v1);
  \node[fill=red!30, circle, draw] (v6) at ($(v3)+(1,0)$) {1};
  \draw (v3) -- (v6);
  \node[anchor=south] at (v1.north) {\{3, 4\}};
  \node[anchor=west]  at (v2.east)  {\{4\}};
  \node[anchor=east]  at (v4.west)  {\{1\}};
  \node[anchor=east]  at (v5.west)  {\{3\}};
  \node[anchor=west]  at (v6.east)  {\{2, 4\}};
\end{tikzpicture}
&
\begin{tikzpicture}[scale=1,
  every node/.style={draw=none, fill=none, minimum size=8mm, font=\small}]
  \node at (0,0) {$\gamma_G$};
  \node[fill=red!30,    circle, draw] (v1) at ( 90:1) {1};
  \node[fill=green!30,  circle, draw] (v2) at ( 18:1) {2};
  \node[fill=blue!30,   circle, draw] (v3) at (-54:1) {3};
  \node[fill=purple!30, circle, draw] (v4) at (-126:1){4};
  \node[fill=green!30,  circle, draw] (v5) at (162:1) {2};
  \draw (v1) -- (v2) -- (v3) -- (v4) -- (v5) -- (v1);
  \node[anchor=south] at (v1.north) {\{3, 4\}};
  \node[anchor=west]  at (v2.east)  {\{4\}};
  \node[anchor=west]  at (v3.east)  {\{1\}};
  \node[anchor=east]  at (v4.west)  {\{1\}};
  \node[anchor=east]  at (v5.west)  {\{3\}};
\end{tikzpicture}
\\[1.2em]
\begin{tikzpicture}[scale=1,
  every node/.style={draw=none, fill=none, minimum size=8mm, font=\small}]
  \node at (0,0) {$\beta_G$};
  \node[fill=red!30,    circle, draw] (v1) at ( 90:1) {1};
  \node[fill=green!30,  circle, draw] (v2) at ( 18:1) {2};
  \node[fill=blue!30,   circle, draw] (v3) at (-54:1) {3};
  \node[fill=purple!30, circle, draw] (v4) at (-126:1){4};
  \node[fill=green!30,  circle, draw] (v5) at (162:1) {2};
  \draw (v1) -- (v2) -- (v3) -- (v4) -- (v5) -- (v1);
  \node[anchor=south] at (v1.north) {\{3, 4, 5\}};
  \node[anchor=west]  at (v2.east)  {\{4, 5\}};
  \node[anchor=west]  at (v3.east)  {\{1, 5\}};
  \node[anchor=east]  at (v4.west)  {\{1, 5\}};
  \node[anchor=east]  at (v5.west)  {\{3, 5\}};
\end{tikzpicture}
&
\begin{tikzpicture}[scale=1,
  every node/.style={draw=none, fill=none, minimum size=8mm, font=\small}]
  \node at (0,0) {$\alpha_G$};
  \node[fill=red!30,   circle, draw] (v1) at ( 90:1) {1};
  \node[fill=green!30, circle, draw] (v2) at ( 18:1) {2};
  \node[fill=blue!30,  circle, draw] (v3) at (-54:1) {3};
  \node[fill=red!30,   circle, draw] (v4) at (-126:1){1};
  \node[fill=green!30, circle, draw] (v5) at (162:1) {2};
  \draw (v1) -- (v2) -- (v3) -- (v4) -- (v5) -- (v1);
  \node[anchor=south] at (v1.north) {\{3, 4, 5\}};
  \node[anchor=west]  at (v2.east)  {\{4, 5\}};
  \node[anchor=west]  at (v3.east)  {\{4, 5\}};
  \node[anchor=east]  at (v4.west)  {\{4, 5\}};
  \node[anchor=east]  at (v5.west)  {\{3, 4, 5\}};
\end{tikzpicture}
\end{tabular}
\caption{The vertex $\delta\in V(\C_4(G))$ corresponding to coloring $\delta_G$ does not survive Step~\ref{step:all-vertices-free} of Algorithm~\ref{algo:link-vertex} because the vertex colored $3$ has no free color (free colors are within braces); $\gamma\in V(\C_4(C_5))$ survives Step~\ref{step:all-vertices-free} but not Step~\ref{step:all-edges-have-common-color} of Algorithm~\ref{algo:link-vertex} because each vertex in $\gamma_G$ has a free color but not all adjacent vertices share a common free color; $\beta\in V(\C_5(C_5))$, survives Step~\ref{step:all-edges-have-common-color} but not Step~\ref{step:max-common-colors} because every pair of adjacent vertices in $\beta_G$ has a common free color but there is only one common free color for all vertices; $\alpha_G$ is a link 5-coloring of $C_5$ such that $\alpha\in V(\C_5(C_5))$ survives Algorithm~\ref{algo:link-vertex} because colors 4 and 5 are free for each vertex and no other coloring can have more than $5-\chi(C_5)=2$ common free colors.}
\label{fig:alg1-attrition}
\end{figure}

In the proof of Lemma~\ref{lem:link-equivalence}, we saw that the vertices $\gamma$ that survive Step~\ref{step:all-vertices-free} correspond to $k$-colorings in which each base graph vertex has a free color. Similarly, the vertices $\beta$ that survive Step~\ref{step:all-edges-have-common-color} represent those colorings in which every pair of adjacent base graph vertices share a common free color. All such colorings yield the same adjacency matrix in Step~\ref{step:adj-matrix}. This shared adjacency matrix is central to Theorem~\ref{thm:reconstruction}, which guarantees the uniqueness of the graph $G$ and the value of $k$ produced by the algorithm.

\begin{theorem}\label{thm:reconstruction}
    For any graph $G$ and $k>\chi(G)$, there is no graph $G'$ and $k'>\chi(G')$ with $G\not\cong G'$ or $k\ne k'$ such that $\C_k(G)\cong\C_{k'}(G')$. Moreover, Algorithm~\ref{algo:link-vertex} produces the unique (up to graph isomorphism) $G$ and $k$.
\end{theorem}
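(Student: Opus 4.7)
The plan is to exploit Lemma~\ref{lem:link-equivalence} twice over the same abstract graph. I would begin by supposing $\C\cong\C_k(G)\cong\C_{k'}(G')$ for pairs $(G,k),(G',k')$ with surplus colors, and then running Algorithm~\ref{algo:link-vertex} on $\C$. Since Lemma~\ref{lem:link-equivalence} applies to any graph with surplus colors, the abstract link vertex set $A$ returned by the algorithm is simultaneously in bijection with the link $k$-colorings of $G$ (via the first isomorphism) and with the link $k'$-colorings of $G'$ (via the second). In particular $A$ is nonempty, since any surplus-colored graph admits at least one link coloring by giving each connected component $H_\ell$ a proper coloring using $\chi(H_\ell)$ colors.

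Next, I would fix any $\alpha\in A$ and extract the matrix $M_\alpha$ from Step~\ref{step:adj-matrix}. The square-counting analysis inside the proof of Lemma~\ref{lem:link-equivalence} shows that $(M_\alpha)_{ij}=1$ precisely when the base-graph vertices indexing the $i$th and $j$th cliques in $\alpha$'s neighborhood are adjacent. Reading this through the first isomorphism means $M_\alpha$ is an adjacency matrix of $G$; reading it through the second means the very same matrix is an adjacency matrix of $G'$. Hence $G\cong G'$, with the explicit isomorphism provided by the identification of the two clique labelings of $\alpha$'s neighborhood.

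For the integer $k$, I would reinterpret the quantities $f_\ell$ computed in Step~\ref{step:find-common-colors}. Under the first isomorphism the same lemma yields $f_\ell=k-\chi(H_\ell)$, where $H_\ell$ is the $\ell$th connected component determined by $M_\alpha$; under the second it yields $f_\ell=k'-\chi(H_\ell)$, since $H_\ell$ has the same chromatic number in either interpretation (via the just-established $G\cong G'$). Summing over components and plugging into the reconstruction formula
\[
\frac{1}{d}\Bigl(f+\sum_{\ell=1}^d\chi(H_\ell)\Bigr)
\]
in Step~7 then outputs both $k$ and $k'$ from the same data, forcing $k=k'$. Step~\ref{step:alg1-return} succeeds by hypothesis, so the algorithm returns the pair $(G,k)$.

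The main obstacle has already been fully absorbed into Lemma~\ref{lem:link-equivalence}: translating missing induced squares into base-graph adjacencies, and counting common free colors correctly per connected component. Beyond invoking that lemma, the only residual task is a bookkeeping check that the arbitrary clique labeling in Step~\ref{step:clique-neighborhood} only permutes vertex indices, so that the reconstructed adjacency matrix is well-defined up to a vertex relabeling, i.e.\ up to graph isomorphism, which is exactly the statement we want.
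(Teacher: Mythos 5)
Your argument is correct and follows essentially the same route as the paper's proof: both apply Lemma~\ref{lem:link-equivalence} twice to the same abstract graph, read the common adjacency matrix $M_\alpha$ from Step~\ref{step:adj-matrix} to conclude $G\cong G'$, and then compare $f_\ell = k-\chi(H_\ell)$ with $f_\ell = k'-\chi(H_\ell)$ to force $k=k'$. You supply a little more explicit bookkeeping than the paper (e.g.\ noting that $A$ is nonempty because link colorings always exist with surplus colors, and spelling out the $f_\ell$ comparison), but the structure and key steps coincide.
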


\begin{proof}
    Suppose that $\C_k(G)\cong\C_{k'}(G')$ for graphs $G$ and $G'$ where $k>\chi(G)$ and $k'>\chi(G')$. We show that $G\cong G'$ and $k=k'$. The isomorphism $\C\cong\C_k(G)\cong\C_{k'}(G')$ means that Algorithm~\ref{algo:link-vertex}, run on $\mathcal{C}$, identifies the \emph{same} set $A$ of abstract link vertices, regardless of whether we consider $\mathcal{C}$ as $\C_k(G)$ or $\C_{k'}(G')$. By Lemma~\ref{lem:link-equivalence}, this set $A$ is in bijection with the link $k$-colorings of $G$, and also in bijection with the link $k'$-colorings of $G'$. 
    
    Algorithm~\ref{algo:link-vertex} (Step~\ref{step:all-vertices-free}) identifies $n$ as the maximum number of cliques in the neighborhood of any abstract link vertex. This number $n$ corresponds to $|V(G)|$ when interpreting $\mathcal{C}$ as $\mathcal{C}_k(G)$ and to $|V(G')|$ when interpreting $\mathcal{C}$ as $\mathcal{C}_{k'}(G')$. Therefore, $|V(G)|=|V(G')|=n$. Algorithm~\ref{algo:link-vertex} (Steps~\ref{step:all-edges-have-common-color} and~\ref{step:adj-matrix}) derives an adjacency matrix $M_{\alpha}$ based on the pattern of missing squares between neighborhood cliques of abstract link vertices. Since this pattern is solely determined by the structure of $\C$, it yields the same $M_{\alpha}$ (up to permutation), regardless of whether $\mathcal{C}$ is interpreted as $\C_k(G)$ or $\C_{k'}(G')$. As $M_{\alpha}$   represents the adjacency matrix of the base graph in both interpretations (because $k>\chi(G)$ \emph{and} $k'>\chi(G')$), it follows that $G\cong G'$. 
    
    Furthermore, the algorithm calculates the maximum value $f=\sum_{\ell} f_{\ell}$ in Step~\ref{step:max-common-colors}. Using the formula from Step~\ref{step:formula-for-k}, we have $k=\frac{1}{d}\left(f + \sum_{\ell\in [d]}\chi(H_{\ell})\right)$ and $k'=\frac{1}{d'}\left(f + \sum_{\ell\in [d']}\chi(H'_{\ell})\right)$. Since $G\cong G'$, the connected components of the two graphs are isomorphic. After relabeling the component indices, we have $d=d'$ and $H_{\ell}\cong H'_{\ell}$ for each $\ell\in [d]$. As $f$ is also derived uniquely from $\C$, it follows that $k=k'$.
    \end{proof}

\begin{remark} The adjacency matrix of $G$ can be reconstructed not only from abstract link vertices in $V(\C_k(G))$, but also from certain other vertices that we can completely characterize. Specifically, for any coloring in which every pair of adjacent vertices in $G$ shares a common free color, the corresponding vertex in $\C_k(G)$ will also yield the correct adjacency matrix. We refer to such colorings as {\em weak link $k$-colorings} of $G$. Unlike link $k$-colorings, weak link $k$-colorings do not require that \emph{all vertices} within a given connected component share a common free color; they only require that \emph{each pair} of adjacent vertices in $G$ shares a common free color (which may differ between pairs). For example, the coloring $\beta_{G}$ in Figure~\ref{fig:alg1-attrition} is a weak link $5$-coloring, but not a link $5$-coloring. The concept of link $k$-colorings, along with Algorithm \ref{algo:link-vertex}, provides a systematic framework for identifying vertices from which the graph $G$ can be uniquely reconstructed. In contrast, it remains unclear how the vertices corresponding to weak link $k$-colorings can be reliably identified using only the structure of the coloring graph.
\end{remark}

In the following subsections, we will extract more information from the coloring graph. We remark that the definition of abstract link vertices applies to any abstract graph $\C$, including those of the form $\C_{\chi(G')}(G')$. This is true even though the concept of ``link $\chi(G')$-colorings" is not defined analogously to link $k$-coloring where $k>\chi(G)$. However, it is not yet clear whether Algorithm~\ref{algo:link-vertex} can successfully run to completion on a graph $\C_{\chi(G')}(G')$ without aborting. For example, consider the $5$-vertex graph $G'$ formed by connecting a vertex of $K_2$ to a vertex of $K_3$ by a single edge. Its 3-coloring graph, $\C=\mathcal{C}_3(G')$, has $24$ vertices: $\mathcal{C}$ is the disjoint union of six paths of length $3$ (that is, on four vertices). The degree-2 vertices in $\C$ survive Step~\ref{step:max-common-colors} of Algorithm~\ref{algo:link-vertex}, yielding the parameters $n=2,m=1,f=1$ for a potential base graph. The resulting matrix $M$ in Step~\ref{step:adj-matrix} corresponds to $G=P_2$, and Step~\ref{step:formula-for-k} then calculates $k=f+\chi(P_2)=1+2=3$. However, the algorithm aborts in Step~\ref{step:alg1-return} because the input graph $\mathcal{C}$ is \emph{not} isomorphic to $\mathcal{C}_3(P_2)\cong C_6$.  

The proof that $\C_{\chi(G')}(G')$ can \emph{never} survive Algorithm~\ref{algo:link-vertex} will follow from Theorem~\ref{thm:main}; for now, we observe that the properties of abstract link vertices already place structural constraints on $G'$. These constraints relate to the number and adjacency of vertices in $G'$ that have free colors available at the corresponding coloring. In the previous example, consider an interior vertex $\alpha$ of one of the copies of $P_4$ in $\mathcal{C}$. Let $\beta_1, \beta_2$ be the two neighbors of $\alpha$. The edges $\alpha\beta_1$ and $\alpha\beta_2$ in $\mathcal{C}$ are neither part of a common clique nor part of an induced square. This necessarily represents a coloring in $G'$ in which only two vertices, say $u$ and $v$, have free colors, and all other vertices have their colors locked (that is, cannot be recolored). Furthermore, the lack of squares indicates that $u$ and $v$ must be adjacent in $G'$. In other words, if $\C_k(G')$ is isomorphic to the disjoint union of six copies of $P_4$, then $G'$ contains $P_2$ as a subgraph. This is related to the fact that the adjacency matrix produced by Step~\ref{step:adj-matrix} corresponds to $P_2$. Lemma~\ref{lem:subgraphs} generalizes this observation.

\begin{lemma}\label{lem:subgraphs}
    If $\C\cong \C_k(G)\cong \C_{\chi'}(G')$ where $k>\chi(G)$ and $\chi'=\chi(G')$, then $G'$ contains $G$ as a subgraph. Moreover, if $G$ consists of connected components $H_1,\dots,H_d$, then $G'$ contains the connected components of $G$ as vertex-disjoint induced subgraphs. That is, there exists an injection $\phi\colon V(G)\to V(G')$ such that for any $i\in[d]$ and $u,v\in H_i$, we have $uv\in E(G)$ if and only if $\phi(u)\phi(v)\in E(G')$. 
\end{lemma}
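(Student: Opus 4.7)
The plan is to leverage the abstract link vertex machinery of Lemma~\ref{lem:link-equivalence}. Fix a link $k$-coloring $\alpha$ of $G$, let $\Phi(\alpha)$ be the corresponding abstract link vertex of $\C$, and use the isomorphism $\C\cong\C_{\chi'}(G')$ to identify $\Phi(\alpha)$ with a vertex of $\C_{\chi'}(G')$ that corresponds to some $\chi'$-coloring $\alpha'$ of $G'$. The neighborhood of $\Phi(\alpha)$ consists of $n=|V(G)|$ cliques (one per free vertex in $\alpha$), so the same holds for $\alpha'$, yielding exactly $n$ ``free'' vertices $v_1',\dots,v_n'\in V(G')$ and an injection $\Phi\colon V(G)\to V(G')$ sending $v_i\mapsto v_i'$.

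The matrix $M=M_\alpha=M_{\alpha'}$ from Step~\ref{step:adj-matrix} is an isomorphism-invariant of $\C$. On the $G$ side, because every pair of adjacent vertices in the link coloring shares at least one common free color, $M_\alpha$ coincides with the adjacency matrix of $G$. On the $G'$ side, a missing square between cliques $i,j$ forces both $v_i'v_j'\in E(G')$ and a common free color for $v_i',v_j'$, so $M_{ij}=1$ implies $v_i'v_j'\in E(G')$. Combining, $uv\in E(G)$ forces $\Phi(u)\Phi(v)\in E(G')$, which embeds $G$ as a subgraph of $G'$ and identifies the connected components $H_1,\dots,H_d$ of $G$ with those of the graph induced by $M$.

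To upgrade this to an induced subgraph statement for each $H_\ell$, I exploit that each $f_\ell$ from Step~\ref{step:find-common-colors} is individually an isomorphism-invariant of $\C$: it is defined as a count of $n_\ell$-tuples in $\C$ satisfying a purely graph-theoretic no-spanning-square condition attached to the $\ell$th $M$-component. For the link coloring $\alpha$ of $G$, one computes $f_\ell=k-\chi(H_\ell)\geq 1$ directly from the fact that all vertices of $H_\ell$ share the same $k-\chi(H_\ell)$ free colors. For $\alpha'$, connectivity of the $M$-component forces all chosen colors in any valid tuple to equal a single color $c$ lying in $F_{\Phi(v)}'$ for every $v\in V(H_\ell)$, so $f_\ell=\bigl|\bigcap_{v\in V(H_\ell)}F_{\Phi(v)}'\bigr|$, where $F_j'$ denotes the free colors of $v_j'$ in $\alpha'$. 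Equating these two expressions shows the intersection is nonempty. Now for any $u,v\in V(H_\ell)$ with $uv\notin E(G)$, the equality $M_{uv}=0$ forces either $\Phi(u)\Phi(v)\notin E(G')$ or $F_{\Phi(u)}'\cap F_{\Phi(v)}'=\emptyset$; the second option contradicts the nonempty common intersection, so $\Phi(u)\Phi(v)\notin E(G')$. Hence each $H_\ell$ embeds as an induced subgraph, and vertex-disjointness across distinct $\ell$ is immediate from the injectivity of $\Phi$.

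The main obstacle is justifying the per-component invariance of the $f_\ell$ under the isomorphism, as opposed to the sum $f=\sum_\ell f_\ell$ explicitly recorded by the algorithm; this holds because the $f_\ell$ values are defined per $M$-component as counts of graph-theoretic configurations in $\C$ and are therefore transported verbatim by any isomorphism. Once this is in hand, the surplus hypothesis $k>\chi(G)$ enters exactly through the bound $f_\ell\geq 1$, which supplies the shared free color needed to rule out extra edges inside each $H_\ell'$.
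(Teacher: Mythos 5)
Your proof is correct and follows the same overall strategy as the paper: fix a link $k$-coloring $\alpha$ of $G$, transport it through the isomorphism to a $\chi'$-coloring $\alpha'$ of $G'$, read off a common matrix $M$ from the missing-square structure to get the subgraph embedding $\Phi$, and then argue that each component $H_\ell$ embeds induced by producing a free color shared by all of $\Phi(V(H_\ell))$ in $\alpha'$. Where you differ is in how you produce that shared free color. The paper argues by tracing a path $u=u_1,\dots,u_n=v$ inside $H_\ell$, observing that each consecutive pair of associated cliques has a missing spanning square (hence a pairwise common free color), and then asserts that ``by the required structure of the missing spanning squares, there must be a common free color for all vertices $\Phi(u_1),\dots,\Phi(u_n)$.'' As stated, that step is not airtight --- pairwise common free colors along a path do not automatically give a single color common to the entire path --- and the paper's assertion implicitly relies on the same count-invariance that you make explicit. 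You instead observe that each $f_\ell$ from Step~\ref{step:find-common-colors} is an isomorphism-invariant of $\C$ attached to the $\ell$th $M$-component, compute $f_\ell=k-\chi(H_\ell)\ge 1$ on the $G$ side (where a link coloring uses exactly $\chi(H_\ell)$ colors on $H_\ell$, so the common free colors are exactly the unused ones), note that connectivity of the $M$-component forces any no-square tuple on the $G'$ side to use a single color in $\bigcap_{v\in V(H_\ell)}F_{\Phi(v)}'$, and conclude this intersection is nonempty; the rest follows exactly as in the paper from the dichotomy that $M_{uv}=0$ forces either non-adjacency or disjoint free-color sets. So your argument is a slightly more careful and self-contained version of the paper's path argument, and it buys a rigorous justification for the one step the paper leaves to the reader.

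One small point worth flagging: in equating $f_\ell$ for $\alpha$ with the intersection size $\bigl|\bigcap_{v\in V(H_\ell)}F_v\bigr|$, you implicitly use that $H_\ell$ is a connected component of $G$ (so all neighbors of any $v\in V(H_\ell)$ lie in $H_\ell$), which is exactly why the common free colors are precisely the colors unused on $H_\ell$. If $H_\ell$ were merely a connected subgraph, the intersection could be smaller than $k-\chi(H_\ell)$. You do have this since $H_\ell$ is a component of $G$, but it is worth saying explicitly since the analogous statement fails for the sets $\Phi(V(H_\ell))$ in $G'$, which is precisely why the comparison has content.
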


\begin{proof}
    By Theorem~\ref{thm:reconstruction}, we know that Algorithm~\ref{algo:link-vertex} run on $\mathcal{C}$ produces a unique (up to permutation) adjacency matrix $M_\alpha$  associated with any abstract link vertex. The isomorphism $\C\cong\C_{\chi(G')}(G')$ induces a mapping $\alpha\mapsto\alpha_{G'}$ so that each abstract link vertex $\alpha\in V(\mathcal{C})$ corresponds to a $\chi'$-coloring $\alpha_{G'}$ of $G'$. The neighborhood structure around $\alpha$ in $\C$, specifically the cliques and the pattern of missing induced squares analyzed in Step~\ref{step:all-edges-have-common-color} and Step~\ref{step:adj-matrix} of Algorithm~\ref{algo:link-vertex}, implies that the corresponding $\chi'$-coloring $\alpha_{G'}$ in $G'$ has exactly $n=|V(G)|$ vertices with at least one free color. Furthermore, a pair of these vertices corresponding to missing induced squares in $\mathcal{C}$ must be adjacent in $G'$. Since $M_\alpha$ is the adjacency matrix of $G$ by Theorem~\ref{thm:reconstruction}, it follows that $G$ is a subgraph of $G'$. We now make the inclusion $G\hookrightarrow G'$ explicit.

    Label the $n$ vertices in $G$ as $v_1,\dots,v_n$, corresponding to the $n$ cliques in the neighborhood of $\alpha$ in $\mathcal{C}$. Via the isomorphism $\mathcal{C}\cong\mathcal{C}_{\chi'}(G')$, each clique associated with $v_j$ in the $\mathcal{C}_k(G)$ interpretation corresponds to changing the color of a unique vertex $\phi(v_j)$ in $G'$. This mapping $\phi$ is well-defined because each clique in the neighborhood of a vertex in $\C_{\chi'}(G')$ corresponds to the set of colorings obtained by changing the color of a single vertex in $G'$. In addition, the resulting map $\phi\colon V(G)\to V(G')$ is an injection. 
    
    As $G'$ contains $G$ as a subgraph, $\phi(u)\phi(v)\in E(G')$ for every edge $uv\in E(G)$. To show that each component $\phi(H_i)$ is an induced subgraph of $G'$, suppose for a contradiction that there exists an edge $\phi(u)\phi(v)\in E(G')$ with $uv\notin E(G)$ for some $u, v\in V(H_i)$. As $uv\notin E(G)$, by Theorem \ref{thm:reconstruction}, every spanning square is present between the cliques in $\C$ associated with base graph vertices $u$ and $v$. Since $u$ and $v$ are in the same connected component $H_i$ of $G$, there exists a path $u=w_1,w_2,\dots, w_{r-1},w_r=v$ within $H_i$ such that $w_jw_{j+1}\in E(G)$ for each $j\in[r-1]$. Thus, for each adjacent pair ($w_j$, $w_{j+1}$) on this path, there is at least one missing spanning square between their corresponding cliques in $\C$. The structure of missing squares along the path (when interpreting $\mathcal{C}$ via $G$) implies that the vertices $w_1, \dots, w_r$ must share at least one common free color in the link $k$-coloring in $G$, $\alpha_G$, associated with $\alpha$. Interpreting $\mathcal{C}$ via $G'$, the same structural properties hold: squares are missing between cliques for $\phi(w_j)$ and $\phi(w_{j+1})$, while all squares are present between cliques for $\phi(w_1)=\phi(u)$ and $\phi(w_r)=\phi(v)$. By the required structure of the missing spanning squares, there must be a common free color for all vertices $\phi(w_1),\dots,\phi(w_r)$.  In particular, this implies that $\phi(u)$ and $\phi(v)$ have a common free color. Since all spanning squares exist between the cliques for $\phi(u)$ and $\phi(v)$, any combination of assigning a free color $f_{u}$ to $\phi(u)$ and a free color $f_{v}$ to $\phi(v)$ results in a valid coloring. Since they share a common free color, we can set $f_u=f_{v}$, resulting in a proper coloring of $G'$ where $\phi(u)$ and $\phi(v)$ have the same color. This contradicts the assumption that $\phi(u)\phi(v)\in E(G')$.   \end{proof}

Under the same hypothesis of Lemma~\ref{lem:subgraphs}, we will show that $G'$ has strictly more vertices than $G$. We first recall the following folklore lemma \cite{Wes01}*{Exercise 5.2.7}.

\begin{lemma}\label{lem:rainbow-star} Given a proper $k$-coloring of a graph $H$ with $\chi(H)=k$, for each $1\leq i\leq k$, there exists a vertex $v$ with color $i$ such that $v$ is adjacent to vertices of the other $k-1$ colors.
\end{lemma}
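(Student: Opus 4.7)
The plan is to prove the lemma by contrapositive/contradiction: assume some color class has no vertex adjacent to all other colors, and then construct an explicit proper coloring using only $k-1$ colors, contradicting $\chi(G')=k$. This follows a standard Kempe-style recoloring argument, but avoids iterating Kempe chains by recoloring the entire offending color class in a single pass.

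More concretely, fix a proper $k$-coloring $c$ and let $V_1,\dots,V_k$ denote the color classes. Fix any color $i\in[k]$ and suppose for contradiction that every vertex $v\in V_i$ is missing at least one color from its neighborhood; that is, for each $v\in V_i$ there exists some $\sigma(v)\in[k]\setminus\{i\}$ such that no neighbor of $v$ is colored $\sigma(v)$ under $c$. Using the axiom of choice on a finite set to fix such a function $\sigma\colon V_i\to[k]\setminus\{i\}$, I would define a new coloring $c'\colon V(G')\to[k]\setminus\{i\}$ by
\[
c'(v) = \begin{cases} \sigma(v) & \text{if } v\in V_i,\\ c(v) & \text{if } v\notin V_i.\end{cases}
\]

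The key step is verifying that $c'$ is a proper coloring, which I would break into three cases on an arbitrary edge $uv\in E(G')$. If $u,v\in V_i$, then $c(u)=c(v)=i$ already contradicts properness of $c$, so this case is vacuous (this is where we use that $V_i$ is independent). If $u\in V_i$ and $v\notin V_i$, then $c'(u)=\sigma(u)$ is by choice absent from $N(u)$, so $c(v)\neq \sigma(u)$, giving $c'(v)=c(v)\neq\sigma(u)=c'(u)$. Finally, if neither $u$ nor $v$ lies in $V_i$, then $c'(u)=c(u)\neq c(v)=c'(v)$ by properness of $c$. Since $c'$ uses only the $k-1$ colors in $[k]\setminus\{i\}$, this contradicts $\chi(G')=k$, completing the proof.

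There is no real obstacle here; the only subtlety is recognizing that one can recolor all of $V_i$ simultaneously, each vertex to its own chosen missing color, without creating a conflict. This works precisely because $V_i$ is an independent set, so the recolored vertices never interact with each other, and each was recolored to a value forbidden in its own neighborhood among the unchanged vertices. The argument applies uniformly for each $i\in[k]$, yielding the desired vertex in every color class.
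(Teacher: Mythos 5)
Your proof is correct and takes essentially the same approach as the paper: assume toward a contradiction that every vertex of color $i$ has a missing color in its neighborhood, recolor each such vertex to a missing color, and observe that the result is a proper $(k-1)$-coloring. The only difference is cosmetic: the paper phrases the recoloring sequentially (``by repeating this recoloring''), while you recolor all of $V_i$ in one pass via the function $\sigma$ and check properness by a three-way case split on edges; both hinge on $V_i$ being independent, which you make explicit.
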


\begin{proof}
Fix any color $i$, and let $X_i$ be the set of vertices colored $i$. Suppose, for contradiction, that \emph{every} $v \in X_i$ is missing at least one color in its neighborhood $N(v)$. That is, for each $v\in X_i$, there is some color $j\neq i$ not present in $N(v)$. We recolor $v$ with $j$ while still maintaining a proper coloring. By repeating this process for every $v \in X_i$, we eliminate color $i$ entirely, resulting in a proper $(k-1)$-coloring of $H$. This contradicts the assumption that $\chi(H)=k$. Thus, there is at least one vertex $v_i \in X_i$ whose neighborhood includes all $k-1$ other colors.
\end{proof}

\begin{lemma}\label{lem:subgraphs:order-comparison} 
If $\C\cong \C_k(G)\cong \C_{\chi'}(G')$ where $k>\chi(G)$ and $\chi'=\chi(G')$, then $|V(G')|\geq |V(G)|+\chi'$. \end{lemma}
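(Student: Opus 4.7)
The plan is to use Lemma~\ref{lem:subgraphs} to identify exactly $|V(G)|$ vertices of $G'$ that must have a free color at a certain $\chi'$-coloring of $G'$, and then invoke Lemma~\ref{lem:rainbow-star} to exhibit $\chi'$ further vertices of $G'$ that have no free color at that same coloring; the two disjoint groups together will give the required lower bound on $|V(G')|$.

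First, I would fix an abstract link vertex $\alpha$ of $\C$, which exists by Lemma~\ref{lem:link-equivalence} applied to $\C\cong\C_k(G)$ (since $k>\chi(G)$ guarantees the existence of link $k$-colorings of $G$). Under the natural identification $V(\C)\cong V(\C_{\chi'}(G'))$, this $\alpha$ corresponds to some proper $\chi'$-coloring $\alpha'$ of $G'$. The neighborhood of $\alpha$ in $\C$ is a disjoint union of cliques, and in any coloring graph each clique around a vertex records the recolorings of a single base-graph vertex through its free colors. Consequently, in the $\C_{\chi'}(G')$-interpretation, the number of cliques around $\alpha$ equals the number of vertices of $G'$ that have a free color at $\alpha'$. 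Since $\alpha$ survives Step~\ref{step:all-vertices-free} of Algorithm~\ref{algo:link-vertex}, this count is exactly $n=|V(G)|$, and these $n$ free-color vertices of $G'$ are precisely the image $\Phi(V(G))\subseteq V(G')$ of the injection constructed in the proof of Lemma~\ref{lem:subgraphs}.

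Next, I would apply Lemma~\ref{lem:rainbow-star} to the proper $\chi'$-coloring $\alpha'$ of $G'$: for each color $i\in[\chi']$ there is a vertex $v_i\in V(G')$ with $\alpha'(v_i)=i$ such that every color in $[\chi']\setminus\{i\}$ appears in $N(v_i)$. Then $\{v_i\}\cup N(v_i)$ already uses all $\chi'$ colors, so $v_i$ has no free color at $\alpha'$; in particular $v_i\notin\Phi(V(G))$. The vertices $v_1,\dots,v_{\chi'}$ are pairwise distinct because they carry pairwise distinct colors under $\alpha'$, so $V(G')\setminus\Phi(V(G))$ contains at least $\chi'$ elements, yielding
\[
|V(G')|\geq |\Phi(V(G))|+\chi'=|V(G)|+\chi'.
\]

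I expect the only delicate point to be cleanly identifying $\Phi(V(G))$ with the set of free-color vertices of $\alpha'$ in $G'$; this identification rests on the observation that the clique count around a vertex in a coloring graph is invariant under coloring-graph isomorphism and always equals the number of base-graph vertices with a free color at the corresponding coloring. Once this is in hand, the argument reduces to packaging Lemmas~\ref{lem:subgraphs} and~\ref{lem:rainbow-star} together.
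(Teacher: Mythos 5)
Your proposal is correct and follows essentially the same route as the paper: fix an abstract link vertex, which corresponds simultaneously to a link $k$-coloring of $G$ (where every base vertex has a free color) and to a $\chi'$-coloring of $G'$; then count the $\chi'$ rainbow vertices of Lemma~\ref{lem:rainbow-star}, which are locked and hence disjoint from the unlocked vertices. Your version is actually a bit more careful than the paper's, which loosely phrases the count in terms of ``degree'' of a vertex in $\C$ when what is really being compared is the number of cliques in its neighborhood (equivalently, the number of unlocked base-graph vertices); you correctly work with the clique count, which is exactly $n=|V(G)|$ for an abstract link vertex and which the isomorphism preserves.
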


\begin{proof}
Under the interpretation of $\mathcal{C}$ as  $\mathcal{C}_{k}(G)$, let $\alpha \in V(\mathcal{C})$ be an abstract link vertex, and let $\alpha_G$ be the corresponding link $k$-coloring of $G$. The neighborhood of $\alpha$ consists of $n=|V(G)|$ distinct cliques associated with vertices in $G$ changing their current color at $\alpha_{G}$ to a free color. Under the interpretation of $\mathcal{C}$ as $\C_{\chi'}(G')$, the same vertex $\alpha$ represents a proper $\chi'$-coloring $\alpha_{G'}$ of $G'$. The same $n$ cliques around $\alpha$ must arise from $n$ distinct vertices in $G'$ that can be recolored from their current color at $\alpha_{G'}$. By Lemma~\ref{lem:rainbow-star}, there are at least $\chi'$ vertices of $G'$ that are \emph{locked} in the coloring $\alpha_{G'}$, that is, vertices in $G'$ that cannot be recolored. Consequently, $|V(G')|\geq |V(G)|+\chi'$. 
\end{proof}

Since $\chi'>0$, Lemma~\ref{lem:subgraphs:order-comparison} has the following immediate consequence.

\begin{corollary}\label{cor:equal-number-of-vertices}
   Given a graph $G$ and $k>\chi(G)$, there is no graph $G'$ with $|V(G)|=|V(G')|$ such that $\C_k(G)\cong \C_{\chi(G')}(G')$.
\end{corollary}

This corollary demonstrates that coloring graphs with surplus colors are complete graph invariants, when restricted to base graphs of the same order. Our main result Theorem~\ref{thm:main} strengthens Corollary~\ref{cor:equal-number-of-vertices} by removing the hypothesis $|V(G)|=|V(G')|$. When $G$ and $G'$ have different orders, proving that $\C_k(G)\not\cong\C_{\chi(G')}(G')$ is highly non-trivial. The remainder of the paper is devoted to resolving this task by examining the structure of coloring graphs with surplus colors and identifying features absent in $\chi$-coloring graphs.

The following subsections introduce techniques that go beyond the scope of Algorithm~\ref{algo:link-vertex} to explore further structural properties that must hold if $ \C_k(G)\cong \C_{\chi(G')}(G')$ with $k>\chi(G)$. Ultimately, these properties will lead to a contradiction, proving that such an isomorphism is impossible.

\subsection{Link vertices are associated with unique coloring partitions of a base (sub)graph}\label{subsect:partition}
We now introduce Algorithm~\ref{algo:partitions}. 
Given $\C\cong\C_k(G)$ with $k>\chi(G)$ and an abstract link vertex $\alpha$ of $\C$, this algorithm labels certain edges incident to $\alpha$ in $\C$. This labeling corresponds to a unique {\em component-wise partition} that reflects the underlying coloring $\alpha_G$ of $G$. The resulting labeling is a key ingredient in Lemma~\ref{lem:partition-G'}. This important lemma establishes that if $\C_k(G)\cong \C_{\chi(G')}(G')$, then the underlying coloring $\alpha_{G'}$ of $G'$ corresponding to $\alpha$ has the same component-wise partition identified from $G$, when restricted to the induced copies of the components of $G$ (see Lemma~\ref{lem:subgraphs}).

\begin{definition}\label{def:comp-partition}
    Given a $k$-coloring $\alpha_G\colon V(G)\to \{1, 2, \dots, k\}$ of graph $G$, the {\em $\alpha_G$-partition of $V(G)$} is $\set{\alpha_G^{-1}(1),\dots,\alpha_G^{-1}(k)}$. If $G$ has $d$ connected components $H_1,\dots,H_d$, the {\em component-wise partition for $\alpha_G$} is  $P(\alpha_G)=(P^1,\dots,P^d)$ where $P^i=\set{\alpha_G^{-1}(1)\cap V(H_i), \dots, \alpha_G^{-1}(k)\cap V(H_i)}$ for each $i\in[d]$. 
    The \emph{component-wise integer partition for $\alpha_G$} is the tuple $\lambda(\alpha_G) = (\lambda^{1}, \dots, \lambda^{d})$ where $\lambda^{i}_j = |P^i_j|$ for all $i\in[d],j\in[\ell(P^i)]$. 
\end{definition}

The component-wise partitions define equivalence classes of $k$-colorings of a graph $G$ with connected components $H_1,\dots,H_d$. We describe this equivalence in terms of group actions. Let $S_k^d = S_k\times \dots \times S_k$ denote the $d$-fold product of the symmetric group $S_k$ on $k$ symbols. For $\sigma=(\sigma_1,\dots,\sigma_d)\in S_k^d$ and a $k$-coloring $\alpha_G$ of $G$, define $\sigma\cdot \alpha_G$ to be a $k$-coloring by assigning the colors $(\sigma\cdot \alpha_G)(v) \colonequals \sigma_i(\alpha_G(v))$ for $v\in V(H_i)$. This defines a group action of $S_k^{d}$ on the set of $k$-colorings of $G$. We say two $k$-colorings $\alpha_G$ and $\beta_G$ are {\em equivalent} if they belong to the same orbit under this $S_k^{d}$ action (that is, $\beta_G=\sigma\cdot \alpha_G$ for some $\sigma\in S_k^{d}$). Each orbit is associated with a unique component-wise partition $P$. Any $k$-coloring $\alpha_G$ consistent with $P$ can serve as a representative for its equivalence class. However, non-equivalent $k$-colorings (belonging to different orbits) may have the same component-wise {\em integer} partition. Figure~\ref{fig:paw} presents such an example.

\begin{figure}[ht]
\begin{center}
\begin{tikzpicture}[scale=1, every node/.style={draw=none, fill=none, minimum size=8mm, font=\small}]

\node at (0,-0.75) {$\alpha_G$};
\node[fill=red!30, circle, draw] (v1) at (0,1.25) {1};
\node[fill=green!30, circle, draw] (v2) at (0,0) {2};
\node[fill=blue!30, circle, draw] (v3) at (1,-1.15) {3};
\node[fill=red!30, circle, draw] (v4) at (-1,-1.15) {1};

\draw (v1) -- (v2) -- (v3) -- (v4) -- (v2);

\node[anchor=west] at (v1.east) {$v_1$};
\node[anchor=west] at (v2.east) {$v_2$};
\node[anchor=west] at (v3.east) {$v_3$};
\node[anchor=east] at (v4.west) {$v_4$};
\end{tikzpicture}
\hspace{6em}
\begin{tikzpicture}[scale=1, every node/.style={draw=none, fill=none, minimum size=8mm, font=\small}]

\node at (0,-0.75) {$\beta_G$};
\node[fill=blue!30, circle, draw] (v1) at (0,1.25) {3};
\node[fill=green!30, circle, draw] (v2) at (0,0) {2};
\node[fill=blue!30, circle, draw] (v3) at (1,-1.15) {3};
\node[fill=red!30, circle, draw] (v4) at (-1,-1.15) {1};

\draw (v1) -- (v2) -- (v3) -- (v4) -- (v2);

\node[anchor=west] at (v1.east) {$v_1$};
\node[anchor=west] at (v2.east) {$v_2$};
\node[anchor=west] at (v3.east) {$v_3$};
\node[anchor=east] at (v4.west) {$v_4$};
\end{tikzpicture}
\end{center}
    \caption{Non-equivalent link 4-colorings $\alpha_G$ and $\beta_G$ of the paw graph $G$ have distinct component-wise partitions, $P(\alpha_G)=(\set{\set{v_1,v_4},\set{v_2},\set{v_3}})$ and $P(\beta_G)=(\set{\set{v_1,v_3},\set{v_2},\set{v_4}})$.  However, while non-equivalent, these colorings have the same component-wise integer partition, $\lambda(\alpha_G)=\lambda(\beta_G)=(211)$.}\label{fig:paw}
\end{figure}

Algorithm~\ref{algo:link-vertex} assigns colors to some of the edges incident to a given abstract link vertex. The following algorithm, Algorithm~\ref{algo:partitions}, extends this labeling in order to determine the partition of base graph vertices $G$ that correspond to the coloring of link vertex $\alpha$ in graph $\mathcal C\cong \mathcal C_k(G),k>\chi(G)$. Note that Algorithm~\ref{algo:partitions} implicitly uses Algorithm~\ref{algo:link-vertex} in the first step to restrict its inputs to the set of abstract graphs $\mathcal C$ that can be interpreted as $\C\cong\C_k(G)$ with $k>\chi(G)$. Lemmas subsequent to the algorithm establish the uniqueness of the labeling of Algorithm~\ref{algo:partitions} (up to color permutations). Specifically, Lemma~\ref{lem:extract-partition} shows that when $\C\cong\C_k(G)$ for $k>\chi(G)$, Algorithm~\ref{algo:partitions} correctly identifies the unique component-wise partition $P(\alpha_G)$ associated with the link $k$-coloring $\alpha_G$. 

\begin{algorithm}[Partition Extraction]\label{algo:partitions} 
    Given a graph $\C$ and abstract link vertex $\alpha\in V(\C)$, identify a list of partitions $P$ for $\alpha_G$ as follows:
\begin{enumerate}
\item\label{step:vertex-labels} Run Algorithm~\ref{algo:link-vertex} to obtain $G$ with connected components $H_1,\dots,H_d$ and $k>\chi(G)$ such that $\C\cong\C_k(G)$, or abort if reconstruction fails. 
 \item\label{step:cc-and-color} For every $i\in [d]$ and free color $c\in [f_i]$ for $f_i$ as in Step~\ref{step:find-common-colors} of Algorithm~\ref{algo:link-vertex}, let $V_{i,c}\subseteq V(\C)$ be the set of $\abs{V(H_i)}$ neighbors of $\alpha$ corresponding to vertices in $H_i$ being recolored to color $c$ from $\alpha$. 
 \item\label{step:Pi} For every partition $P^i \vdash V_{i,c}$ with $\ell(P^i)=\chi(H_i)$, determine whether to retain $P^i$ as follows. 
\item\label{step:extend-to-hypercube} For each $j\in[\chi(H_i)]$, check whether the neighbors of $\alpha$ corresponding to $P_j^i$  extend to a hypercube in $\C$, and reject $P^i$ and go back to Step~\ref{step:Pi} if any does not. 
\item\label{step:consistent-antipode} For $\alpha_j^i$ antipodal to $\alpha$ on the hypercube associated with $P^i_j$, first check whether $\alpha_j^i$ is an abstract link vertex according to Algorithm~\ref{algo:link-vertex}. If not, reject $P^i$ and return to Step~\ref{step:Pi}. Otherwise, check whether all neighbors of $\alpha_j^i$ in the hypercube correspond to a common free color at $\alpha_j^i$. If not, reject $P^i$ and return to Step~\ref{step:Pi}. Otherwise, retain $P^i$, assign color $f_i+j$ as the color of each $v\in P^i_j$ at $\alpha_G$, and proceed to find a partition for the next connected component. 
\item Return the successfully identified component-wise partition $P=(P^1,\dots,P^d)$.
\end{enumerate}
\end{algorithm}

The following lemma states that Algorithm~\ref{algo:partitions} correctly identifies the unique component-wise partition for a link $k$-coloring when run on the surplus coloring graph $\mathcal{C}\cong \mathcal{C}_{k}(G)$ with $k>\chi(G)$. The result holds regardless of how the base graph vertices $v_1, \dots, v_{\abs{V(G)}}$ are initially labeled in Algorithm~\ref{algo:link-vertex} (provided the labeling is consistent with the adjacency matrix). Figure~\ref{fig:paw-partition} below explains how Algorithm~\ref{algo:partitions} unfolds for the $4$-coloring graph of the paw graph from Figure~\ref{fig:paw}. Note that Algorithm~\ref{algo:link-vertex} could have interchanged the $v_3$ and $v_4$ labels due to the automorphism of $G$.

 \begin{figure}[ht]
\begin{center}
\begin{tikzpicture}
    \node[circle, draw] (A1) at (0,0) {$\alpha$};
    \node[circle, draw, fill=green!70] (B1) at (-.5,1.2) {$\phantom{e}$};
    \node[circle, draw] (B2) at (.5,1.2) {$\phantom{e}$};
    \node[circle, draw,fill=green!70] (C1) at (1.3,0) {$\phantom{e}$};
     \node[circle, draw,fill=green!70] (D1) at (-1.3,0) {$\phantom{e}$};
      \node[circle, draw,fill=green!70] (E1) at (0,-1.3) {$\phantom{e}$};

      \node at (0,0.75) {$v_1$};  
      \node at (0.7,0.2) {$v_2$}; 
      \node at (0.25,-0.7) {$v_3$};
      \node at (-0.7,0.2) {$v_4$};

      \node[circle, draw,color = red] (B2C) at (1.6,1.3) {$\phantom{e}$};
      \node[circle, draw,color = red] (BD) at (-1.6,1.3) {$\phantom{e}$};
      \node[circle, draw,color = red] (B2D) at (-0.8,2.1) {$\phantom{e}$};
      \node[circle, draw,color = red] (BE) at (-0.9,-1) {$\phantom{e}$};
      \node[circle, draw,color = red] (B2E) at (0.9,-1) {$\phantom{e}$};

      \draw[red, dashed] (B2) -- (B2C);
      \draw[red, dashed] (C1) -- (B2C);
      \draw[red, dashed] (B1) -- (BD);
      \draw[red, dashed] (D1) -- (BD);
      \draw[red, dashed] (B2) -- (B2D);
      \draw[red, dashed] (D1) -- (B2D);
      \draw[red, dashed] (B1) -- (BE);
      \draw[red, dashed] (E1) -- (BE);
      \draw[red, dashed] (B2) -- (B2E);
      \draw[red, dashed] (E1) -- (B2E);

    \draw (A1) -- (B1) -- (B2) -- (A1);
    \draw (A1) -- (C1);
    \draw (A1) -- (D1);
    \draw (A1) -- (E1);

     \node at (0, -2) {Steps 1 and 2};
\end{tikzpicture}
\hspace{3pc}
\begin{tikzpicture}
    \node[circle, draw] (A1) at (0,0) {$\alpha$};
    \node[circle, draw] (B1) at (-.5,1.2) {$\phantom{e}$};
    \node[circle, draw] (B2) at (.5,1.2) {$\phantom{e}$};
    \node[circle, draw,fill=purple!20] (C1) at (1.3,0) {$\phantom{e}$};
     \node[circle, draw,fill=purple!20] (D1) at (-1.3,0) {$\gamma$};
      \node[circle, draw] (E1) at (0,-1.3) {$\phantom{e}$};
      \node[circle, draw,fill=purple!20] (BE) at (-0.9,-1) {$\beta$};

      \node at (0,0.75) {$v_1$};  
      \node at (0.7,0.2) {$v_2$}; 
      \node at (0.25,-0.7) {$v_3$};
      \node at (-0.7,0.2) {$v_4$};

      \draw[ dashed] (B1) -- (BE);
      \draw[ dashed] (E1) -- (BE);
    \draw (A1) -- (B1) -- (B2) -- (A1);
    \draw (A1) -- (C1);
    \draw (A1) -- (D1);
    \draw (A1) -- (E1);
    \node at (0, -2) {$\left(\{\{v_1,v_3\},\{v_2\},\{v_4\}\}\right)$};
\end{tikzpicture}
\hspace{3pc}
\begin{tikzpicture}
    \node[circle, draw] (A1) at (0,0) {$\alpha$};
    \node[circle, draw] (B1) at (-.5,1.2) {$\phantom{e}$};
    \node[circle, draw] (B2) at (.5,1.2) {$\phantom{e}$};
    \node[circle, draw,fill=purple!70] (C1) at (1.3,0) {$\phantom{e}$};
     \node[circle, draw] (D1) at (-1.3,0) {$\phantom{e}$};
      \node[circle, draw,fill=purple!70] (E1) at (0,-1.3) {$\phantom{e}$};

      \node at (0,0.75) {$v_1$};  
      \node at (0.7,0.2) {$v_2$}; 
      \node at (0.25,-0.7) {$v_3$};
      \node at (-0.7,0.2) {$v_4$};

      \node[circle, draw,fill=purple!70] (BD) at (-1.6,1.3) {$\phantom{e}$};

      \draw[ dashed] (B1) -- (BD);
      \draw[ dashed] (D1) -- (BD);

    \draw (A1) -- (B1) -- (B2) -- (A1);
    \draw (A1) -- (C1);
    \draw (A1) -- (D1);
    \draw (A1) -- (E1);
    \node at (0, -2) {$\left(\{\{v_1,v_4\},\{v_2\},\{v_3\}\}\right)$};
\end{tikzpicture}
\end{center}
    \caption{Algorithm~\ref{algo:partitions} for the link $k$-coloring $\alpha_G = 1231$ in $\C_4(G)$ from Figure~\ref{fig:paw}. First,  Step~\ref{step:vertex-labels} associates all incident edges at $\alpha\in V(\C_4(G))$ with vertex labels from the base graph consistent with the reconstructed graph $G$. Step~\ref{step:cc-and-color} identifies all of the green highlighted vertices in $\C_4(G)$ as changing to a common free color (in this case color 4), as indicated by the sequence of missing squares between them.  We then start testing partitions of length 3.  The partition $\left(\{\{v_2,v_3\},\{v_1\},\{v_4\}\}\right)$ fails Step~\ref{step:extend-to-hypercube} as there is no hypercube extended from the green vertices labeled $v_2$ and $v_3$. The partition $\left(\{\{v_1,v_3\},\{v_2\},\{v_4\}\}\right)$ survives this step but fails Step~\ref{step:consistent-antipode} as the vertex $\gamma$ corresponding to $\gamma_G = 1234$ in the second picture is not an abstract link vertex. Moreover, while $\beta$ with $\beta_G=4241$ is an abstract link vertex, it fails Step~\ref{step:consistent-antipode} as its return trip to $\alpha = 1231$ does not use a common free color (in this case color 3).  The correct partition $\left(\{\{v_1,v_4\},\{v_2\},\{v_3\}\}\right)$ survives every step in Algorithm~\ref{algo:partitions} as the colored vertices in the third picture are all abstract link vertices that have a common free color returning to $\alpha$.
       }\label{fig:paw-partition}
\end{figure}

 \begin{lemma}\label{lem:extract-partition}
     Let $\C\cong\C_k(G)$ for $k>\chi(G)$. Let $\alpha\in V(\C)$ be an abstract link vertex corresponding to a link $k$-coloring $\alpha_G$ of $G$. Then Algorithm~\ref{algo:partitions}, run with input $\C$ and vertex $\alpha$, returns the unique component-wise partition for $\alpha_G$. In particular, for each connected component $H_i$ of $G$ where $i\in [d]$, exactly one candidate partition $P^i$ will satisfy the conditions in Steps~\ref{step:extend-to-hypercube} and \ref{step:consistent-antipode}.
 \end{lemma}

 \begin{proof} Let $P(\alpha_G)=(P^1,\dots, P^d)$ be the component-wise partition for link $k$-coloring $\alpha_G$.  Fix $i\in[d],j\in[\chi(H_i)]$ and any color $c$ among the $f_i=k-\chi(H_i)$ common free colors for vertices in $H_i$ from coloring $\alpha_G$. Let $(\alpha^i_j)_G$ be the link $k$-coloring obtained from $\alpha_G$ by recoloring every vertex in $P^i_j$ to color $c$, which has the same component-wise partition $P(\alpha_{G})$. By Lemma~\ref{lem:link-equivalence}, the vertex $\alpha_j^i=\Phi^{-1}((\alpha_j^i)_G)$ in $\C$ must be an abstract link vertex.  Moreover, $\alpha=\Phi^{-1}(\alpha_G)$ and $\alpha_j^i$ are antipodal vertices of a $\abs{P^i_j}$-dimensional hypercube in $\mathcal{C}$. This is because the underlying link $k$-coloring $(\alpha_j^{i})_G$ differs from $\alpha_G$ by a partial transposition taking $\alpha_G$'s $j$th part of the $i$th connected component and recoloring it $c$. Furthermore, the neighbors of $\alpha_j^i$ within this hypercube correspond to changing the vertices in $P^i_j$ from color $c$ back to their original common color in $\alpha_G$. Thus, this choice of $P=P(\alpha_G)$ satisfies all the checks in Algorithm~\ref{algo:partitions}.
 
Now, consider a different candidate partition $\tilde P = (\tilde P^1, \dots, \tilde P^d)$ for $\alpha_G$. If $\tilde P^i\neq P^i$ for some $i\in [d]$, then at least one part $\tilde P^{i}_j \in \tilde P^i$ contains two vertices $v$ and $w$ with $\alpha_G(v)\neq \alpha_G(w)$. When Algorithm~\ref{algo:partitions} considers the part $\tilde P^{i}_j$, there are two possibilities. First, neighbors of $\alpha$ corresponding to changing vertices in $\tilde P^{i}_j$ may not survive Step~\ref{step:extend-to-hypercube}. Otherwise, these vertices will extend to a hypercube whose antipodal vertex $\tilde{\alpha}$ corresponds to a coloring $\tilde{\alpha}_G$ where $v$ and $w$ have color $c$. Upon running Algorithm~\ref{algo:link-vertex} at $\tilde{\alpha}$, the path of recolorings returning to $\alpha$ involves changing $v$ back to $\alpha_G(v)$ and $w$ back to $\alpha_G(w)$. Since these colors are different, the neighbors of $\tilde{\alpha}$ on the return path do not correspond to a single common free color, failing Step~\ref{step:consistent-antipode}. Therefore, $P(\alpha_G)$ is the unique component-wise partition for $\alpha_G$ returned by the algorithm. \end{proof}

When applied to an abstract link vertex $\alpha$ in a coloring graph $\C_k(G)$ with $k>\chi(G)$,  Algorithm~\ref{algo:partitions} assigns a specific proper coloring $\alpha_G\colon V(G)\to [k]$. Moreover, by providing free color labels and identifying the correct partition, Algorithm~\ref{algo:partitions} consistently assigns a proper coloring to all hypercube-adjacent abstract link vertices $\alpha^i_j$. However, this consistency is only guaranteed within this local neighborhood. Running the algorithm on an abstract link vertex $\beta$ that is not hypercube-adjacent to $\alpha$ may produce a labeling that is inconsistent with the labeling assigned to $\alpha$ and its hypercube neighborhood. To remedy this limitation, Algorithm~\ref{algo:LLG} in Section~\ref{subsect:LLG} enhances Algorithm~\ref{algo:partitions} through a more sophisticated labeling process that maintains consistency across equivalent abstract link vertices.

All subsequent results in our paper work toward establishing that $\C_k(G)$ for $k>\chi(G)$ can never be isomorphic to $\C_{\chi(G')}(G')$ for any $G,G'$.  Therefore, in the remainder of the paper, we will use the following hypothesis for the sake of contradiction, which uses the results of Lemma~\ref{lem:subgraphs}.

\medskip

\begin{hypothesis}\label{hyp:running} Suppose $\C\cong \C_k(G)\cong \C_{\chi'}(G')$ where $k>\chi(G)$ and $\chi'=\chi(G')$. This pair of isomorphisms, $\C\cong \C_k(G)$ and $\C_k(G)\cong \C_{\chi'}(G')$, will be fixed, as will the underlying colorings of $G$ and $G'$ corresponding to each vertex in $\C$. Assume $H_1, \dots, H_d$ are the connected components of $G$. If $\alpha\in V(\C)$ is an abstract link vertex, then $\alpha_G$ and $\alpha_{G'}$ will be the corresponding link $k$-coloring of $G$ and $\chi'$-coloring of $G'$, respectively.  By Lemma~\ref{lem:subgraphs}, $G'$ contains subgraphs $H'_1, \dots, H'_d$ such that $H'_i\cong H_i$ for each $i\in [d]$. We also recall the embedding $\phi\colon V(G)\to V(G')$ from Lemma~\ref{lem:subgraphs}. 
\end{hypothesis}

\medskip  

Next, we show that Algorithm~\ref{algo:partitions} successfully extracts the unique vertex partition $P^i$ for each induced component $H'_i$ in $G'$ at the same abstract link vertex of $\mathcal{C}$. 

\begin{lemma}\label{lem:partition-G'}  Assume Hypothesis~\ref{hyp:running}. For each $i\in [d]$, $\alpha_{G'}$ restricted to $V(H'_i)$ uses exactly $\chi(H_i)$ colors. Furthermore, there exists an injection $\psi_i\colon [k]\to[\chi']$  such that $\alpha_{G'}\left(\phi(v)\right)=\psi_i(\alpha_{G}(v))$ for each $v\in V(H_i)$. That is, the partition of $V(H'_i)$ induced by $\alpha_{G'}$ is equivalent (via $\phi$ and $\psi_i$) to the partition $P^i$ of $V(H_i)$ induced by $\alpha_G$.
\end{lemma}

\begin{proof}
Lemma~\ref{lem:extract-partition} ensures that Algorithm~\ref{algo:partitions}, run on $\mathcal{C}$ and $\alpha$, correctly identifies the unique component-wise $P(\alpha_{G}) = (P^1, \dots, P^d)$ associated with the link $k$-coloring $\alpha_G$. Since the algorithm relies only on the structure of $\mathcal{C}$, it must identify the same partitions when interpreting $\mathcal{C}$ as $\mathcal{C}_{\chi'}(G')$. We will confirm that the identified partition $P^i$ corresponds to a valid proper coloring of $V(H'_i)$ represented by the coloring $\alpha_{G'}$.

By Lemma~\ref{lem:subgraphs}, each $H_i'$  is an induced subgraph of $G'$. Therefore, the adjacency relations within $H'_i$ are consistent with $H_i$, and the partition $P^i$ defines independent sets \emph{within} $H'_i$. There is only one potential issue: the edges between $V(H'_i)$ and $V(G')\setminus V(H'_i)$ might conflict with the coloring pattern dictated by $P^i$.

However, the structural analysis performed by Algorithm~\ref{algo:partitions} works equally well for the $G'$ interpretation. Suppose $\alpha, \alpha'$ are antipodal abstract link vertices on a hypercube associated with changing an \emph{independent} part $T\in P^i$ from color $c$ to color $\overline{c}$. This structure in $\mathcal{C}$ implies that the corresponding $\chi'$-colorings $\alpha_{G'}$ and $\overline{\alpha}_{G'}$ differ exactly on the vertices of $\phi(T)$, where $\alpha_{G'}(\phi(v))=c$ and $\overline{\alpha}_{G'}(\phi(v))=\overline{c}$ for all $v\in T$. The same analysis holds for all such hypercube transitions identified by Algorithm~\ref{algo:partitions}. Crucially, Algorithm~\ref{algo:partitions} analyzes each component $H_i$ (and thus $H'_i$) independently based on its internal structure and common free colors. It ensures the partition $P^i$ reflects the coloring within $H'_i$. It does not, however, determine relationships between the color sets used for different components $H'_i$ and $H'_j$.

Therefore, for each induced subgraph $H'_i$ of $G'$, the identified partition $P^i$ accurately reflects the color classes within $V(H'_i)$ under the coloring $\alpha_{G'}$. This is formalized by defining, for each component $H_i$, a map $\psi_i\colon [k]\to [\chi']$ that translates the colors used by $\alpha_{G}$ on $H_i$ to those used by $\alpha_{G'}$ on $H'_i$. Concretely, $\psi_i$ maps the color $\alpha_{G}(v)$ to the color $\alpha_{G'}(\phi(v))$ for each $v\in V(H_i)$. The choice of $\psi_i$ is not unique, because it only needs to be prescribed on the set of $\chi(H_i)$ colors used by the link $k$-coloring $\alpha_G$. By the preceding discussion, $\psi_i$ is injective and the relation $\alpha_{G'}\left(\phi(v)\right)=\psi_i(\alpha_{G}(v))$ holds for each $v\in H_i$ by definition.
\end{proof}

Lemma~\ref{lem:partition-G'} guarantees that the internal coloring structure of each subgraph $H'_i$ in $G'$ matches the corresponding $H_i$ in $G$ (up to color permutation) at any abstract link vertex $\alpha$. However, it does not constrain the relationship between the actual sets of colors used. For instance, the set of $\chi(H_i)$ colors used in $H'_i$ might partially or fully overlap with the set of $\chi(H_j)$ colors used in $H'_j$. By analyzing $\mathcal{C}_{\chi'}(G')$ at an abstract link vertex, Algorithm~\ref{algo:partitions} determines the partition within each $H'_i$ consistently. However, we cannot guarantee that the color labels we have chosen for individual $H_i'$ patch together in a consistent way to yield a proper coloring $\alpha_{G'}$ of $G'$. The subtlety arises because there may be additional edges in $G'$, including edges between $u\in V(H'_i)$ and $v\in V(H'_j)$. See Figure~\ref{fig:counterexample} for an example of this phenomenon. 

Another key subtlety arises when applying Lemma~\ref{lem:partition-G'} in the $\mathcal{C}_{\chi'}(G')$ context. The graph $G'$ may contain multiple copies of the subgraph $G$ (or its components $H_i$). Merely identifying a partition corresponding to $H_i'$ at an abstract link vertex $\alpha$ does \emph{not} guarantee we are looking at the same copy of $H_i$ within $G'$ when we move to a different abstract link vertex $\beta$ (see Figure~\ref{fig:changing-G}). However, the algorithm introduced in the next subsection helps resolve this issue by analyzing how different link vertices with the same vertex partition are connected within $\mathcal{C}$. This will effectively fix the specific subgraph of $G'$ under consideration, which allows reasoning about the interaction between this fixed subgraph and the rest of $G'$. We achieve our goal by introducing a new structure, termed \emph{labeled link graph}. The vertices in this new graph are link vertices in a given equivalence class, with an edge between each pair of equivalent link vertices that are antipodal vertices of a hypercube in the underlying coloring graph. As demonstrated in the following subsection, analyzing the labeled link graph associated with a link vertex $\alpha$ in $\mathcal{C}$ removes the ambiguity about which $G$ subgraph of $G'$ is being considered.

\subsection{Equivalent link vertices and labeled link graphs}\label{subsect:LLG}
Building on Algorithm~\ref{algo:partitions}, we now extend the labeling process for a coloring graph with surplus colors. For $\C\cong\C_k(G)$ where $k>\chi(G)$, we aim for a consistent labeling of the hypercube structures connecting equivalent link vertices.
We first establish a series of lemmas demonstrating how equivalent link $k$-colorings are connected within $\mathcal{C}$. 

Let $\alpha_G$ be a link $k$-coloring of graph $G$ with connected components $H_1,\dots,H_d$ where $k>\chi(G)$. Let $i\in[d]$, and let $\sigma$ be a transposition in the symmetric group $S_k$ swapping two colors in $[k]$. 

\begin{lemma}\label{lem:cube-adjacency} Define the coloring $\sigma_i(\alpha_G)$ by applying the transposition $\sigma$ only in $H_i$ by setting $\sigma_i(\alpha_G)(v)=\sigma(\alpha_G(v))$ for $v\in V(H_i)$ and $\sigma_i(\alpha_G)(v)=\alpha_G(v)$ for $v\notin V(H_i)$. Then the corresponding vertices $\alpha$ and $\sigma_i(\alpha)$ of $\C_k(G)$ are in the same connected component. \end{lemma}
\begin{proof}
Suppose without loss of generality that $G$ is connected (so $d=1$) and $\sigma=(12)$. The argument extends to disconnected graphs because $\sigma$ does not affect vertices outside $H_i$. 

\textbf{Case 1.} Neither 1 nor 2 is in the image of $\alpha_G$.

 In this case, $\alpha_G=(12)\alpha_G$, so the corresponding vertices are identical and thus trivially connected. 
 
\textbf{Case 2.} Exactly one of 1 or 2 is in the image of $\alpha_G$. 

Then $\alpha$ and $(12)\alpha$ are antipodal vertices on a cube of dimension $\max\set{\abs{\alpha_G^{-1}(1)},\abs{\alpha_G^{-1}(2)}}$, because the vertices colored 1 in $\alpha_G$ (or 2, if none are 1 in $\alpha_G$) form the dimensions of a hypercube representing their colors independently changing from 1 to 2 (or 2 to 1, if none are 1 in $\alpha_G$) moving from $\alpha$ to $(12)\alpha$. 

\textbf{Case 3.} Both 1 and 2 are in the image of $\alpha_G$. 

Since $k>\chi(G)$, there must be at least one color not used by $\alpha_G$. Without loss of generality, assume that color 3 is not in the image of $\alpha_G$. By the same reasoning from Case 2, the sequence of link colorings $(\alpha,(13)\alpha,(12)(13)\alpha,(23)(12)(13)\alpha)$ are pairwise antipodal vertices connected via hypercubes, and hence $\alpha$ and $(23)(12)(13)\alpha=(12)\alpha$ are connected.
\end{proof}

\begin{figure}[ht]
\begin{tikzpicture}[scale=1, every node/.style={draw=none, fill=none, minimum size=8mm, font=\small}]

\node[draw=none,fill=none] at (-7,1) {\Large $G, k=2$};

\node at (-7,-0.75) {{\Large $\alpha_G$}};
\node[fill=red!30, circle, draw] (u1) at (-8,0) {1};
\node[fill=red!30, circle, draw] (u2) at (-6,0) {1};
\node[anchor=east] at (u1.west) {$v_1$};
\node[anchor=west] at (u2.east) {$v_2$};

\node[draw=none,fill=none] at (0, 0.9) {\Large $G', k'=4 $}; 
\node at (0, -0.75) {{\Large $\alpha'_{G'}$}};
\node[fill=red!30, circle, draw] (v1) at (-1, 0) {1}; 
\node[fill=blue!30, circle, draw] (v2) at (1, 0) {3}; 
\node[anchor=east] at (v1.west) {$v_1$};
\node[anchor=west] at (v2.east) {$v_2$};
\draw (v1) -- (v2);

\node[fill=blue!30, circle, draw] (v3) at (-1, -1.5) {3}; 
\node[fill=red!30, circle, draw] (v4) at (1, -1.5) {1}; 
\node[fill=purple!30, circle, draw] (v5) at (-1, -3) {4}; 
\node[fill=green!30, circle, draw] (v6) at (1, -3) {2}; 
\draw (v3) -- (v4);
\draw (v5) -- (v4);
\draw (v5) -- (v3);
\draw (v6) -- (v3);
\draw (v6) -- (v5);
\draw (v6) -- (v4);

\draw (v6) to[out=45, in=-45] (v2);
\draw (v4) -- (v2);
\draw (v5) to[out=145, in=210] (v1);
\draw (v3) -- (v1);
\end{tikzpicture}
\caption{A $2$-coloring $\alpha_G$ of $G$ consisting of two isolated vertices, where $k=2>\chi(G)$, and a $4$-coloring $\alpha'_{G'}$ of $G'$ with $k'=4=\chi'$. In both $\alpha_G$ and $\alpha'_{G'}$, only $v_1$ and $v_2$ have exactly one free color, and they can be recolored independently. The neighborhood structure around abstract link vertex $\alpha\in\C_k(G)$ and $\alpha'\in\C_{\chi'}(G')$ are identical. Applying Lemma~\ref{lem:partition-G'}, we identify the trivial partitions $\{v_1\}$ for $H_1\cong H'_1$ and $\{v_2\}$ for $H_2\cong H'_2$. This reflects $\alpha'_{G'}(v_1)=1$ and $\alpha'_{G'}(v_2)=3$ in $G'$. However, this local analysis does not reveal the edge $v_1v_2$ in $G'$, so we cannot infer whether $v_1$ and $v_2$ can take the same color.  We note that $G$ and $G'$ will not have equivalent coloring graphs; this example merely illustrates a subtlety between locally identical vertices in $\C_k(G)$ and $\C_{\chi'}(G')$.}
\label{fig:counterexample}
\end{figure}

\begin{figure}[ht]
    \begin{center}
\begin{tikzpicture}[scale=1, every node/.style={draw=none, fill=none, minimum size=8mm, font=\small}]
\node at (0,0) {{\Large $\alpha_{G'}$}};

\node[fill=green!30, circle, draw=red, very thick] (u1) at ({90+72*(1-1)}:1.5cm) {2};
\node[fill=red!30, circle, draw=red,very thick] (u5) at ({90+72*(2-1)}:1.5cm) {1};
\node[fill=green!30, circle, draw] (u4) at ({90+72*(3-1)}:1.5cm) {2};
\node[fill=blue!30, circle, draw] (u3) at ({90+72*(4-1)}:1.5cm) {3};
\node[fill=red!30, circle, draw] (u2) at ({90+72*(5-1)}:1.5cm) {1};
\node[fill=red!30, circle, draw] (u6) at (0,-3) {1};

\draw[very thick, red] (u5) -- (u1);
\draw (u1) -- (u2) -- (u3) -- (u4) -- (u5);
\draw (u4) -- (u6) -- (u3);

  \node[anchor=east]  at (u5.west)  {\{3\}};
    \node[anchor=south]  at (u1.north)  {\{3\}};

\begin{scope}[xshift=6cm]

\node at (0,0) {{\Large $\beta_{G'}$}};

\node[fill=blue!30, circle, draw=red, very thick] (u1) at ({90+72*(1-1)}:1.5cm) {3};
\node[fill=red!30, circle, draw] (u5) at ({90+72*(2-1)}:1.5cm) {1};
\node[fill=green!30, circle, draw] (u4) at ({90+72*(3-1)}:1.5cm) {2};
\node[fill=blue!30, circle, draw] (u3) at ({90+72*(4-1)}:1.5cm) {3};
\node[fill=red!30, circle, draw=red, very thick] (u2) at ({90+72*(5-1)}:1.5cm) {1};
\node[fill=red!30, circle, draw] (u6) at (0,-3) {1};

\draw[very thick, red] (u2) -- (u1);
\draw (u2) -- (u3) -- (u4) -- (u5) -- (u1);
\draw (u4) -- (u6) -- (u3);

\node[anchor=west]  at (u2.east)  {\{2\}};
    \node[anchor=south]  at (u1.north)  {\{2\}};
\end{scope}
\end{tikzpicture}
\end{center}
\caption{A 6-vertex graph $G'$ and two colorings $\alpha_{G'}$ and $\beta_{G'}$ which are adjacent in $\mathcal C_{\chi(G')}(G')$. When run on any vertex in $\mathcal C_{3}(G')$, Algorithm~\ref{algo:link-vertex} will reconstruct the graph $G=P_2$ and $k=3>\chi(G)$, as exactly two adjacent vertices in $G'$ have a common free color and all other vertices are locked. However, the corresponding subgraph $G=P_2$ in $G'$ changes depending on the underlying proper $\chi'$-coloring of $G'$.  }\label{fig:changing-G}
\end{figure}

Let $V_1=\alpha_G^{-1}(1)$ and $V_2=\alpha_G^{-1}(2)$. In Case 3 of Lemma~\ref{lem:cube-adjacency}, the path of connectivity is established by traversing hypercubes. We describe this transformation explicitly by labeling the path of hypercubes connecting $\alpha$ and $(12)\alpha$ as follows:
\begin{equation}\label{eq:cube-chain-labeling}
\alpha \xleftrightarrow{1V_13}(13)\alpha \xleftrightarrow{2V_21} (12)(13)\alpha\xleftrightarrow{3V_12} (23)(12)(13)\alpha=(12)\alpha
\end{equation}
where the notation $c_1Vc_2$ indicates that every vertex in part $V$ changes its color from $c_1$ to $c_2$.

We refer to the equivalence relation introduced after Definition~\ref{def:comp-partition}.  Lemma~\ref{lem:cube-adjacency} extends to show connectivity between \emph{any} two equivalent link $k$-colorings in $\C_k(G)$. This follows because any element in the group $S_k^{d}=S_{k}\times \dots \times S_k$ (acting on the set of link $k$-colorings) can be generated by a sequence of transpositions within components, and Lemma~\ref{lem:cube-adjacency} handles connectivity for each transposition.

\begin{corollary}\label{cor:equivalent-links-connected}
    If $\alpha_G$ and $\beta_G$ are equivalent link $k$-colorings of $G$ with $k>\chi(G)$, then the corresponding vertices in $\C_k(G)$ are in the same connected component.
\end{corollary}

The proof of Lemma~\ref{lem:cube-adjacency} also reveals how equivalent colorings differing on partial transpositions are connected via hypercubes of a certain dimension. Every nontrivial transposition of $\alpha$ is connected to $\alpha$ by either one hypercube (Case 2), or a chain of three hypercubes (Case 3). Furthermore, in Case 3 (where $\alpha_G$ uses $1$ and $2$, and we assume $3$ is free), there exists a second path of three hypercubes connecting $\alpha$ to $(12)\alpha$:
\begin{equation}\label{eq:cube-chain-labeling2}
\alpha \xleftrightarrow{2V_23}(23)\alpha \xleftrightarrow{1V_12} (12)(23)\alpha\xleftrightarrow{3V_21} (13)(12)(23)\alpha=(12)\alpha.
\end{equation}
The two paths \eqref{eq:cube-chain-labeling} and \eqref{eq:cube-chain-labeling2} are internally disjoint; their union creates a \emph{6-cycle of cubes}. Figure~\ref{fig:coloring-graph-labeling} illustrates how $\C_3(P_3)$ is a 6-cycle of cubes of alternating dimensions 1 and 2. This holds more generally for $\C_3(K_{n_1,n_2})$, noting that $P_3\cong K_{1,2}$, as formalized by the following corollary and illustrated in Figure~\ref{fig:6cyclecube}.

\begin{corollary}\label{cor:equivalent-links-connected-hypercubes}
    Let $\alpha_G$ be any link $k$-coloring of $G$ with components $H_1,\dots,H_d$ and $k>\chi(G)$. Suppose $\alpha_G$ has component-wise integer partition vector $\lambda(\alpha_G)=(\lambda^1,\dots,\lambda^d)$ where $\lambda^i\vdash |V(H_i)|$ and $\ell(\lambda^i)=\chi(H_i)$. Then for each $i\in[d]$ and any two indices $1\le j_1<j_2\le \chi(H_i)$, the coloring graph $\C_k(G)$ contains $\C_3(K_{\lambda^i_{j_1},\lambda^i_{j_2}})$ as a subgraph.
\end{corollary}

Figure~\ref{fig:6cyclecube} shows an example of a 6-cycle of cubes, where for a base graph $G$ the parts defined by $V_1=\{v_4,v_5,v_6\}$ and $V_2=\{v_1,v_3\}$ cycle through colors 1,3, and 4. As $|V_1|=3$ and $|V_2|=2$, the resulting subgraph in $\mathcal{C}_4(G)$ is isomorphic to $\mathcal{C}_3(K_{3,2})$. The six abstract link vertices are named \emph{links} because they serve as common vertices shared by a family of cycles within $\C_3(K_{|V_1|, |V_2|})$. Each cycle in this family is composed of three paths of length $|V_1|$ and three paths of length $|V_2|$, with each path traversing one of the six cubes, for a total length of $3(|V_1|+|V_2|)$. This entire structure $\C_3(K_{|V_1|, |V_2|})$, visualized as a 6-cycle of cubes, is an induced subgraph of $\mathcal{C}_k(G)$. Moreover, every 6-cycle of cubes is also ``induced" from a hypercube perspective. This means the six link vertices are only adjacent to each other (via hypercubes) as dictated by the $6$-cycle structure, with each vertex connecting to exactly two others.

\begin{figure}[ht]
\centering
\includegraphics[width=1\linewidth]{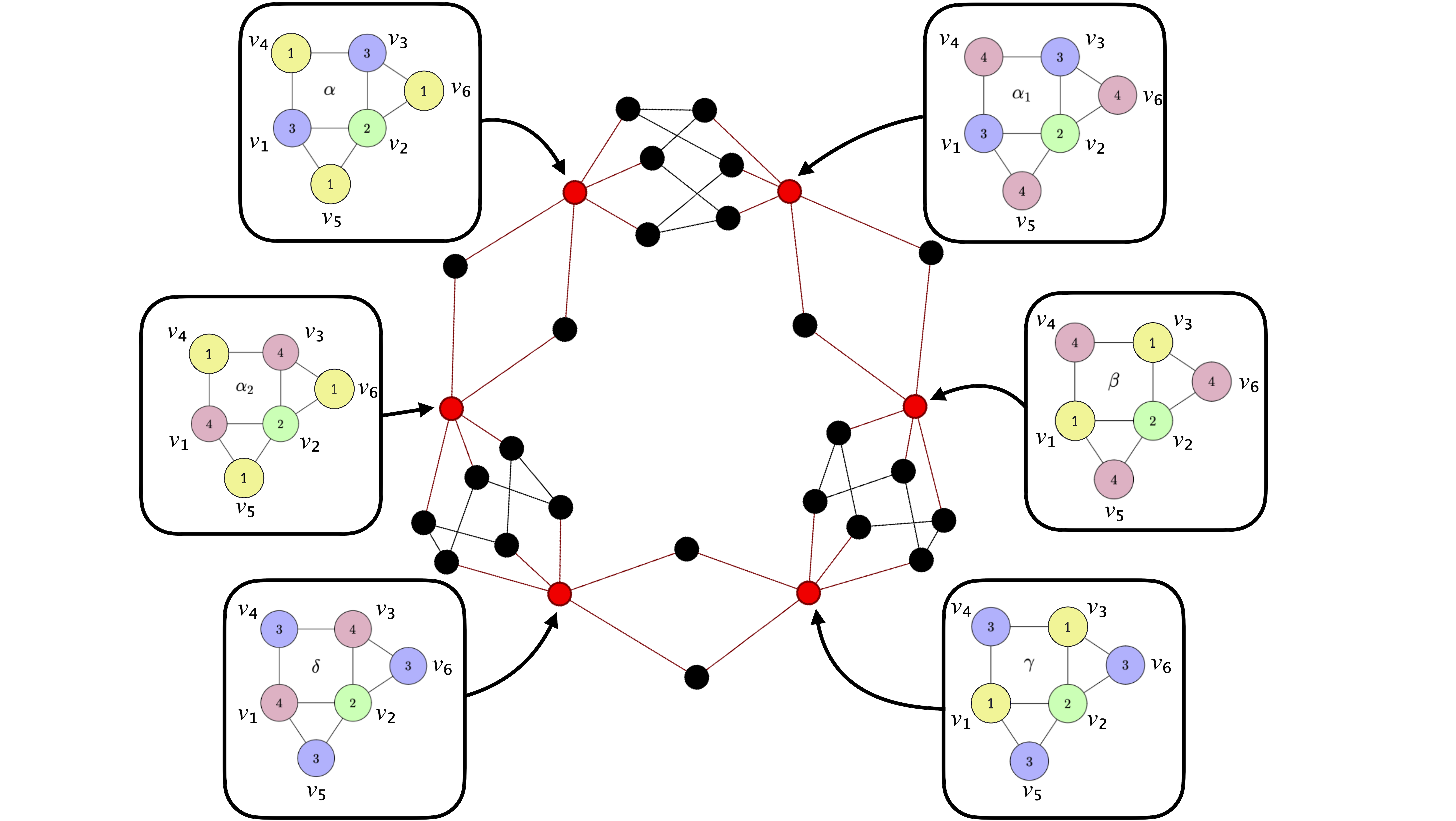}
    \caption{The 6-cycle of cubes defined by abstract link vertex $\alpha$, free color $4$, and parts $V_1=\{v_4,v_5,v_6\}$ and $V_2=\{v_1,v_3\}$.  The 6 red vertices are the 6 abstract link vertices that participate in the 6-cycle.  All black vertices are the parts $V_1$ or $V_2$ independently changing colors to reach the next abstract link vertex.  In our notation, this 6-cycle of cubes is described as  $\alpha \xleftrightarrow{1V_14}  \alpha_1\xleftrightarrow{3V_21} \beta\xleftrightarrow{4V_13}\gamma \xleftrightarrow{1V_24} \delta\xleftrightarrow{3V_11} \alpha_2\xleftrightarrow{4V_23} \alpha$. }
    \label{fig:6cyclecube}
\end{figure}

The results in this subsection so far pertain to structures in coloring graphs with surplus colors. Algorithm~\ref{algo:LLG} describes a procedure to identify hypercubes associated with some abstract link vertex $\alpha$ to propagate consistent cube labels, even without assuming surplus colors. The algorithm begins with an abstract link vertex $\alpha$ and the color and vertex labels for its base graph as provided by Algorithms~\ref{algo:link-vertex}~and~\ref{algo:partitions}. By iteratively rerunning Algorithm~\ref{algo:partitions} and analyzing 6-cycles of cubes, Algorithm~\ref{algo:LLG} ensures that color and vertex labels are consistent across neighboring link vertices. This process continues until the entire labeled link graph associated with $\alpha$ is generated and labeled. In Lemma~\ref{lem:unique-LLG-labeling}, we prove that labeling decisions made by Algorithm~\ref{algo:LLG} are uniquely determined for surplus coloring graphs (up to the choice of a bijection $\alpha\mapsto \alpha_{G}$).

Recall that the group $S_k^{d}$ acts on the set of proper $k$-colorings of $G$. If $\alpha_G$ is a link $k$-coloring, then $\sigma\cdot\alpha_{G}$ is also a link $k$-coloring. Hence, the action descends to the set of link $k$-colorings. Given a link $k$-coloring $\alpha_{G}$ in $G$, we denote by $[\alpha_{G}]$ the equivalence class of $\alpha_{G}$ (that is, the orbit of $\alpha_G$ under this group action). Suppose $\C$ is an abstract coloring graph arising from a graph $G$ with surplus colors; that is, $\C\cong\C_k(G)$ where $k>\chi(G)$. Using Lemma~\ref{lem:link-equivalence}, there is a bijection between abstract link vertices $\alpha$ in $\C$ and link $k$-colorings $\alpha_G$ in $G$. We define $[\alpha]$ as the set of vertices in $\C$ that represent the colorings in 
$[\alpha_G]$ under this bijection. 

\begin{definition}\label{def:labeled-link-graph}
Suppose we are given $\mathcal{C}\cong\C_k(G)$ along with the map $\alpha\mapsto \alpha_G$. Assume  $k>\chi(G)$ and we have an abstract link vertex $\alpha$ corresponding to link $k$-coloring $\alpha_{G}$ with component-wise partition $P(\alpha_G)=(P^1,\dots,P^d)$.  Define $\mathcal{L}_{[\alpha]}$ as an edge-labeled graph where:
   \begin{enumerate}
    \item Vertices of $\mathcal{L}_{[\alpha]}$: abstract link vertices $\beta$ in $[\alpha]$, that is, $\beta_{G}$ is equivalent to $\alpha_G$.
    \item Edges of $\mathcal{L}_{[\alpha]}$: $\beta$ and $\gamma$ are adjacent if $\beta_G$ and $\gamma_G$ differ in the color of exactly one part $P^i_j$.
    \item Edge labels: If $\beta_G$ and $\gamma_G$ differ on the part $P^i_{j}$ with $\beta_{G}(P^{i}_{j})=c_1$ and $\gamma_{G}(P^{i}_{j})=c_2$, we label the edge $\beta\gamma$ by:
    \[
    \beta\xleftrightarrow{c_1 P^i_{j}c_2}  \gamma. 
    \]
\end{enumerate}
We refer to $\mathcal{L}_{[\alpha]}$ as a \emph{labeled link graph} of $\mathcal{C}_{k}(G)$ corresponding to an equivalence class $[\alpha]$.
\end{definition}

The object $\mathcal{L}_{[\alpha]}$ is not intrinsic to the base graph $G$. There is a subtle dependence on the choice of the bijection $\alpha \mapsto \alpha_{G}$ that concretely realizes a vertex $\alpha$ in $V(\mathcal{C})$ as a function $\alpha_G\colon V(G)\to [k]$. 

\begin{lemma}
   The graph $\mathcal{L}_{[\alpha]}$, disregarding its edge labels, is isomorphic to $\mathcal{C}_k\left(\biguplus_{i\in [d]} K_{\chi(H_i)}\right)$.
\end{lemma}

A general fact \cite{BFHRS16}*{Lemma 1} states that the coloring graph of the disjoint union, $\mathcal{C}_k\left(\biguplus_{i\in [d]} G_i\right)$, is isomorphic to a Cartesian product of individual coloring graphs, $\square_{i\in[d]}\mathcal{C}_{k}(G_i)$. 

\begin{proof}
A link $k$-coloring $\alpha_G$ (and any $\beta_G \in [\alpha_G]$) uses a specific set of $\chi(H_i)$ distinct colors for each connected component $H_i$ of $G$. These $\chi(H_i)$ colors define the parts $P^i_1, \dots, P^i_{\chi(H_i)}$ of the component-wise partition $P^i$ for $H_i$. In $\mathcal{L}_{[\alpha]}$, an edge corresponds to changing the color assigned to one such part, say $P^i_j$, to a new color, while the colors of all other parts $P^s_t$ (for $(s,t) \neq (i,j)$) remain unchanged.

Consider the graph $K_{\chi(H_i)}$. Its vertices can be identified with the abstract ``slots" for the $\chi(H_i)$ colors used in component $H_i$. A $k$-coloring of $K_{\chi(H_i)}$ assigns one of $k$ colors to each of these $\chi(H_i)$ slots such that all slots receive distinct colors. An edge in $\mathcal{C}_k(K_{\chi(H_i)})$ connects two such colorings if they differ in the color assigned to exactly one slot.

This is precisely the structure of transitions within each component $H_i$ as reflected in $\mathcal{L}_{[\alpha]}$: changing the color of a part $P^i_j$ (a ``slot") to a new color (from the $k$ available, distinct from colors of other parts $P^i_m$ in $H_i$) corresponds to an edge in $\mathcal{C}_k(K_{\chi(H_i)})$. Since these changes can be made independently for each component $H_i$ of $G$ to move between colorings in $[\alpha_G]$, the overall structure of $\mathcal{L}_{[\alpha]}$ (unlabeled) is the Cartesian product of these individual coloring graphs, $\square_{i\in[d]}\mathcal{C}_{k}(K_{\chi(H_i)})$. This is isomorphic to the $k$-coloring graph of the disjoint union of these complete graphs, $\mathcal{C}_k\left(\biguplus_{i\in [d]} K_{\chi(H_i)}\right)$.
\end{proof}

This structure implies that all of the 6-cycles of cubes that connect equivalent link $k$-colorings in $\C_k(G)$ become 6-cycles in the labeled link graph.  Identifying (and ultimately labeling) these 6-cycles is a key step in constructing the labeled link graph. The following lemma states that if two consecutive edges in the 6-cycle are specified, then there is a unique continuation of the path that completes this 6-cycle. This uniqueness holds as long as the two edges are not also part of a 3- or 4-cycle.  This condition is satisfied if the labeling of these edges is of the form $\alpha_1 
 \xleftrightarrow{c_3 P^i_{j_1}c_1}  \alpha  \xleftrightarrow{c_2 P^i_{j_2} c_3} \alpha_2$, where, from $\alpha$, two different parts in the same connected component are sent to the same free common color $c_3$.

\begin{lemma}\label{lem:length-4-path}
Suppose $\alpha_1, \alpha_2\in [\alpha]$ are two vertices satisfying the following adjacency condition in $\mathcal{L}_{[\alpha]}$: 
\[ \alpha_1 
 \xleftrightarrow{c_3 P^i_{j_1}c_1}  \alpha  \xleftrightarrow{c_2 P^i_{j_2} c_3} \alpha_2.
\]
Then there is a unique shortest path from $\alpha_1$ to $\alpha_2$ in $\mathcal{L}_{[\alpha]}$ that avoids $\alpha$, which has length $4$ given by:
\[
\alpha_1\xleftrightarrow{c_2P^i_{j_2}c_1} \beta\xleftrightarrow{c_3P^i_{j_1}c_2}\gamma \xleftrightarrow{c_1P^i_{j_2}c_3} \delta\xleftrightarrow{c_2P^i_{j_1}c_1} \alpha_2
\]
for some vertices $\beta, \gamma, \delta$.
\end{lemma}

\begin{proof} 
Let $t$ denote the length of the shortest $\alpha$-avoiding path from $\alpha_1$ to  $\alpha_2$. We claim that $t=4$. Suppose, to the contrary, that $t\leq 3$. If $t=3$, then together with $\alpha$, we have an induced $5$-cycle in $\mathcal{L}_{[\alpha]}$. This is a contradiction, because $C_5$ is not an induced subgraph of any coloring graph \cite{BFHRS16}*{Corollary 12}. We may now assume $t\leq 2$. Since $\alpha_1$ and $\alpha_2$ are not adjacent, $t\neq 1$. Consequently, $t=2$ and there exists a vertex $\zeta\neq \alpha$ between $\alpha_1$ and $\alpha_2$ in $\mathcal{L}_{[\alpha]}$:  
\[ \alpha_1 
 \xleftrightarrow{?}  \zeta  \xleftrightarrow{?} \alpha_2.
\]
Now, $(\alpha_1)_{G}$ and $(\alpha_2)_{G}$ differ exactly on the two parts $P^i_{j_1}$ and $P^i_{j_2}$. By inspection, no such $\zeta$ can exist, as it would require both $P_{j_1}^i$ and $P_{j_2}^i$ to be colored $c_3$, producing a proper coloring that uses fewer colors than the chromatic number. Thus, $t=4$. The following $\alpha$-avoiding path of length $4$ from $\alpha_1$ to $\alpha_2$ certainly exists:
\begin{equation}\label{eq:length-4-path}
\alpha_1\xleftrightarrow{c_2P^i_{j_2}c_1} \beta\xleftrightarrow{c_3P^i_{j_1}c_2}\gamma \xleftrightarrow{c_1P^i_{j_2}c_3} \delta\xleftrightarrow{c_2P^i_{j_1}c_1} \alpha_2
\end{equation}
Next, we show that \eqref{eq:length-4-path} is the unique such path of length $4$. 

Consider an arbitrary $\alpha$-avoiding path of length $4$ from $\alpha_1$ to $\alpha_2$. When combined with the edges $(\alpha, \alpha_1)$ and $(\alpha_2, \alpha)$, we obtain an induced $6$-cycle $(\alpha, \alpha_1, \beta, \gamma, \delta, \alpha_2, \alpha)$ in $\mathcal{L}_{[\alpha]}$. By \cite{ABFR18}, an induced 6-cycle in a coloring graph arises in one of two situations: (a) two vertices alternately swapping between three colors, or (b) three vertices independently swapping between two colors. Case (b) always occurs as part of a $3$-cube. Our setup involves two parts $P_{j_1}^{i}, P_{j_2}^{i}$, and specifies two colors for each of these parts (colors $c_1$ and $c_3$ for $P_{j_1}^i$ and colors $c_2$ and $c_3$ for $P_{j_2}^i$).  If we had case (b), the 3-cube allows all color changes independently, so we would have a vertex where both $P_{j_1}^i$ and $P_{j_2}^i$ are colored $c_3$, which would result in a proper coloring using fewer colors than the chromatic number, a contradiction. Therefore, we must have case (a). The intermediate vertices of the 6-cycle are uniquely determined by the two involved parts and three colors. Thus, the remaining vertices $\beta, \gamma, \delta$ in any induced 6-cycle formed this way are unique, proving that \eqref{eq:length-4-path} is the only shortest $\alpha$-avoiding path of length 4 from $\alpha_1$ to $\alpha_2$. \end{proof}

For an abstract link vertex $\alpha\in V(\C)$, a modified use of Algorithm~\ref{algo:partitions} can be applied to find all equivalent link vertices, which are precisely the vertices of $\mathcal{L}_{[\alpha]}$.  After identifying the partition $P(\alpha_G)$ of $\alpha_G$ using Algorithm~\ref{algo:partitions}, it is clear that changing every vertex in any one part $P^i_j$ to any common free color results in an equivalent link vertex, $\beta$.  This process can be repeated at $\beta$ to discover additional equivalent link vertices. Given the connectivity of the labeled link graph, conducting this search in breadth-first order guarantees that we will find all vertices in $[\alpha]$.

The subtlety in this approach is that although Algorithm~\ref{algo:partitions} works equally well on both $\alpha$ and $\beta$, the exact labeling of the partitions is not necessarily consistent.  Even with only one connected component, there is a potential mismatch. Suppose $P(\alpha_G)=P^1$ with the partition $P^1=P^1_1\uplus P^1_2 \uplus P^1_3$, where $\beta_G$ is obtained by recoloring vertices in $P^1_1$. There is no guarantee that Algorithm~\ref{algo:partitions} run on $\beta$ will label $P^1_2$ as $P^1_2$ again; it might instead be labeled $P^1_3$. 

The following algorithm, Algorithm~\ref{algo:LLG}, addresses this subtlety by not only identifying all of the vertices in $\mathcal{L}_{[\alpha]}$ but also consistently labeling the parts and colors (and thus the edges in $\mathcal{L}_{[\alpha]}$).   Given an initial abstract link vertex $\alpha$, Algorithms~\ref{algo:link-vertex} and~\ref{algo:partitions} are run to obtain a canonical labeling of vertices, colors, and the partition in Steps~\ref{step:alg1-labels} and~\ref{step:alg2-labels}, aborting if the input graph is not isomorphic to a coloring graph with surplus colors. A breadth-first search of all vertices in $\mathcal{L}_{[\alpha]}$ is then initialized in Steps~\ref{step:initQ} and~\ref{step:forloop}.  In these steps, every edge incident to $\alpha$ in $\mathcal{L}_{[\alpha]}$ is labeled using the canonical labeling from the previous steps.  

The final Step~\ref{step:findalledges} runs the breadth-first search until all vertices are found, while consistently labeling all edges of $\mathcal{L}_{[\alpha]}$. Given a complete set of labeled edges incident to a vertex $\gamma$ in $\mathcal{L}_{[\alpha]}$, including the edge $\gamma\beta$, we analyze certain cycles to label all incident edges to $\beta$ consistently. There are four such cases, which are depicted in Figure~\ref{fig:cycleoptions} and described in the ensuing algorithm. In the algorithm, the breadth-first ordering guarantees that when labeling the edges incident to some $\beta$, there is at least one predecessor $\gamma$ (i.e., a neighbor of $\beta$ that is closer to $\alpha$) whose incident edges have already been labeled.  

\begin{algorithm}[Labeled Link Graph Identification]\label{algo:LLG}
    Given a graph $\C$ and an abstract link vertex $\alpha\in V(\C)$, identify $[\alpha]$ and construct its labeled link graph $\mathcal{L}_{[\alpha]}$ as follows:
\begin{enumerate}

\item\label{step:alg1-labels} Run Algorithm~\ref{algo:link-vertex} to obtain $G$ (or abort) with components $H_1,\dots,H_d$, labels on incident edges indicating color-changing vertices and the common free color palette $[f_i]$ for each $v\in H_i$ from $\alpha$.
\item\label{step:alg2-labels}  Run Algorithm~\ref{algo:partitions} to assign canonical color labels $\{f_i+1,\dots,f_i+\chi(H_i)\}$ to the colors used in $\alpha$ for vertices within each component $H_i$, consistent with its component-wise partition $P_\alpha$.
\item\label{step:initQ} Initialize the vertex set $V=\{\alpha\}$ and the edge set $E=\emptyset$ for $\mathcal{L}_{[\alpha]}$.  Initialize an empty queue $Q$.
\item\label{step:forloop}  Iterate for each $i\in[d]$, each color $c\in [f_i]$, and  each index $1\leq j\leq \chi(H_i)$: Let $c_j=f_i+j$ denote the color of $P_{j}^i$ at $\alpha$ assigned by Algorithm~\ref{algo:partitions}.  Let $\beta$ be the abstract link vertex corresponding to recoloring part $j$ with $c$ from $\alpha$.
 Add $\beta$ to $V$ and $Q$, and $\alpha\beta$ to $E$ with label  $\alpha \xleftrightarrow{c_jP^i_{j}c}  \beta$. 
\item\label{step:findalledges} While $Q$ is nonempty: remove the next element $\beta$ from queue $Q$ and find a predecessor element $\gamma$ in $V\setminus Q$ such that $\beta\gamma\in E$. Let $c_jP^i_jc$ be the existing label of this edge $\gamma\beta$ so that $c_j$ is the color of $P^i_j$ at $\gamma$ and $c$ is the color of $P^{i}_{j}$ at $\beta$. For each $1\leq \ell\leq d$, let $\mathcal{F}^{\ell}$ be the common free color palette at $\gamma$ for $H_{\ell}$; note that $c\in\mathcal{F}^i$ and $c_j\notin\mathcal{F}^i$.  We label all currently unlabeled incident edges to $\beta$ (and possibly other edges) by analyzing incident edges to $\gamma$ as follows:
\begin{itemize}
    \item[(a)]\label{step:alg3:six-cycles} {\bf Iterate through incident edges to $\gamma$ where other parts in $H_i$ change to the same free color $c$}: 
    
    For each $1\le k\le \chi(H_i)$ with $k\neq j$: use existing labels on edges incident to $\gamma$ to identify the unique $\delta$ such that $\gamma \xleftrightarrow{c_kP^i_{k}c}  \delta$.  By exploring hypercube adjacent neighbors, and using Algorithm~\ref{algo:partitions} to confirm that each vertex we consider is in $[\alpha]$, find the shortest sequence of vertices in $[\alpha] \setminus \{\gamma\}$ from $\beta$ to $\delta$ where two consecutive vertices are antipodal points of hypercubes of alternating dimensions $|P_{j}^{i}|$ and $|P_{k}^i|$.  By Lemma~\ref{lem:length-4-path},  this path is $\beta,\beta_1,\beta_2,\beta_3,\delta$. If not already in $V$, add $\beta_1,\beta_2,\beta_3$ (in this order) to $V$ and $Q$.  Add the following edges to $E$ with labels 
\[\gamma \xleftrightarrow{c_jP^i_{j}c}  \beta\xleftrightarrow{c_kP^i_{k}c_j} \beta_1\xleftrightarrow{cP^i_{j}c_k}\beta_2 \xleftrightarrow{c_jP^i_{k}c} \beta_3\xleftrightarrow{c_kP^i_{j}c_j} \delta\xleftrightarrow{cP^i_{k}c_k}  \gamma.
\]
\item[(b)]\label{step:alg3:squares-in-component}{\bf Iterate through incident edges to $\gamma$ where other parts in $H_i$ change to a different free color $\tilde c\ne c$}: 

For each $1\le k\le \chi(H_i)$ with $k\neq j$ and $\tilde{c}\in\mathcal{F}^i\setminus\{c\}$: use existing labels on edges incident to $\gamma$ to identify the unique $\delta$ such that $\gamma \xleftrightarrow{c_kP^i_{k}\tilde{c}}  \delta$. As this color change is independent of recoloring $c_jP^i_jc$ on the edge $\gamma\beta$, there is a unique vertex $\beta_1\in[\alpha]$ hypercube adjacent to both $\beta$ and $\delta$.  If not already in $V$, add $\beta_1$ to $V$ and $Q$, and add the following edges to $E$ with labels
\[\gamma \xleftrightarrow{c_jP^i_{j}c}  \beta\xleftrightarrow{c_kP^i_{k}\tilde{c}} \beta_1\xleftrightarrow{cP^i_{j}c_j}\delta\xleftrightarrow{\tilde{c}P^i_{k}c_k}  \gamma.
\]
\item[(c)]\label{step:alg3:cliques}{\bf Iterate through incident edges to $\gamma$ where $P^i_j$ changes to a different free color $\tilde c\ne c$}: \\ For each index $\tilde{c}\in\mathcal{F}^i\setminus\{c\}$: use existing labels on edges incident to $\gamma$ to identify the unique $\delta$ such that $\gamma \xleftrightarrow{c_j P^i_{j}\tilde{c}}  \delta$.  This abstract link vertex $\delta$ is necessarily hypercube adjacent to $\beta$, so we add the edge $\delta\xleftrightarrow{\tilde{c}P^i_{j}c}  \beta$ to $E$.
\item[(d)]\label{step:alg3:squares-out-component}{\bf Iterate through incident edges to $\gamma$ where parts not in $H_i$ change to any free color}: 

For each $\ell\in[d]\setminus\{i\}$ and $\tilde{c}\in\mathcal{F}^\ell$: use existing labels on edges incident to $\gamma$ to identify the unique $\delta$ such that $\gamma \xleftrightarrow{c_sP^\ell_{k}\tilde{c}}  \delta$. As this color change is independent of recoloring $c_jP^i_jc$ on the edge $\gamma\beta$, there is a unique vertex $\beta_1\in[\alpha]$ hypercube adjacent to both $\beta$ and $\delta$.  If not already in $V$, add $\beta_1$ to $V$ and $Q$, and add the following edges to $E$ with labels
\[\gamma \xleftrightarrow{c_jP^i_{j}c}  \beta\xleftrightarrow{c_sP^\ell_{k}\tilde{c}} \beta_1\xleftrightarrow{cP^i_{j}c_j}\delta\xleftrightarrow{\tilde{c}P^\ell_{k}c_s}  \gamma.
\]
\end{itemize}
\end{enumerate}
\end{algorithm}

\begin{figure}[ht]
\begin{tikzpicture}[
    vertex/.style={},
    edge_label/.style={
        sloped,    
        midway,    
        auto,      
        font=\small 
    },
    edge_style/.style={
        <->, 
        >=Latex  
    }
  ]
  
  \def\diagramradius{1.7cm} 

  \node[vertex] (gamma)  at (90:\diagramradius) {$\gamma$};
  \node[vertex] (beta) at (30:\diagramradius) {$\beta$};
  \node[vertex] (beta_1)   at (-30:\diagramradius) {$\beta_1$};
  \node[vertex] (beta_2)  at (-90:\diagramradius) {$\beta_2$};
  \node[vertex] (beta_3)  at (-150:\diagramradius) {$\beta_3$};
  \node[vertex] (delta) at (150:\diagramradius) {$\delta$}; 
   
  \draw[edge_style, blue] (gamma) -- node[edge_label] {$c_jP^i_{j}c$} (beta);
  \draw[edge_style, dashed, red] (beta) -- node[edge_label] {$c_kP^i_{k}c_j$} (beta_1);
  \draw[edge_style, dashed] (beta_1) -- node[edge_label, swap] {$c_kP^i_{j}c$} (beta_2); 
  \draw[edge_style, dashed] (beta_2) -- node[edge_label, swap] {$cP^i_{k}c_j$} (beta_3); 
  \draw[edge_style, dashed] (beta_3) -- node[edge_label] {$c_k P^i_{j}c_j$} (delta);
  \draw[edge_style] (delta) -- node[edge_label] {$cP^i_{k}c_k$} (gamma);
\end{tikzpicture}
\quad 
\begin{tikzpicture}[
    vertex/.style={},
    edge_label/.style={
        sloped,    
        midway,    
        auto,      
        font=\small 
    },
    edge_style/.style={
        <->, 
        >=Latex  
    }
  ]
  
  \def\diagramradius{1.7cm} 

  \node[vertex] (gamma_sq) at (90:\diagramradius)  {$\gamma$};
  \node[vertex] (beta_sq)  at (0:\diagramradius)   {$\beta$};
  \node[vertex] (beta1_sq) at (-90:\diagramradius) {$\beta_1$};
  \node[vertex] (delta_sq) at (180:\diagramradius) {$\delta$};
  
  \draw[edge_style, blue] (gamma_sq) -- node[edge_label] {$c_jP^i_{j}c$} (beta_sq);
  \draw[edge_style, dashed, red] (beta_sq)  -- node[edge_label, swap] {$\tilde{c} P^i_{k}c_k$} (beta1_sq);
  \draw[edge_style, dashed] (beta1_sq) -- node[edge_label, swap] {$c_j P^i_{j}c$} (delta_sq);
  \draw[edge_style] (delta_sq) -- node[edge_label] {$\tilde{c}P^i_{k}c_k$} (gamma_sq);
\end{tikzpicture}
\quad 
\begin{tikzpicture}[
    vertex/.style={}, 
    edge_label/.style={
        sloped,    
        midway,    
        auto,      
        font=\small 
    },
    edge_style/.style={
        <->, 
        >=Latex  
    }
  ]
  
  \def\diagramradius{1.6cm} 

  \node[vertex] (delta_tri) at (90:\diagramradius)    {$\delta$};
  \node[vertex] (beta_tri)  at (-30:\diagramradius)   {$\beta$};
  \node[vertex] (gamma_tri) at (210:\diagramradius)   {$\gamma$};
  
  \draw[edge_style, red, dashed] (delta_tri) -- node[edge_label] {$\tilde{c}P^i_{j}c$} (beta_tri);
  \draw[edge_style, blue] (beta_tri)  -- node[edge_label, swap] {$c_j P^i_{j}c$} (gamma_tri);
  \draw[edge_style] (gamma_tri) -- node[edge_label] {$c_jP^i_{j}\tilde{c}$} (delta_tri);
\end{tikzpicture}
\quad 
\begin{tikzpicture}[
    vertex/.style={},
    edge_label/.style={
        sloped,    
        midway,    
        auto,      
        font=\small 
    },
    edge_style/.style={
        <->, 
        >=Latex  
    }
  ]
  
  \def\diagramradius{1.7cm} 
  \node[vertex] (gamma_sq_new) at (90:\diagramradius)  {$\gamma$};
  \node[vertex] (beta_sq_new)  at (0:\diagramradius)   {$\beta$};
  \node[vertex] (beta1_sq_new) at (-90:\diagramradius) {$\beta_1$};
  \node[vertex] (delta_sq_new) at (180:\diagramradius) {$\delta$};
  \draw[edge_style, blue] (gamma_sq_new) -- node[edge_label] {$c_jP^i_{j}c$} (beta_sq_new);
  \draw[edge_style, red, dashed] (beta_sq_new)  -- node[edge_label, swap] {$\tilde{c} P^{\ell}_{k}c_s$} (beta1_sq_new);
  \draw[edge_style, dashed] (beta1_sq_new) -- node[edge_label, swap] {$c_j P^i_{j}c$} (delta_sq_new);
  \draw[edge_style] (delta_sq_new) -- node[edge_label] {$\tilde{c}P^{\ell}_{k}c_s$} (gamma_sq_new);
\end{tikzpicture}

\caption{Illustration of Algorithm~\ref{algo:LLG}, Step~\ref{step:findalledges}, parts (a)-(d). The blue arrow indicates the edge from $\beta$ to its predecessor $\gamma$. Solid edges are already labeled; dashed edges are in the process of being labeled (although some may have been previously labeled). Processing the red dashed edge $\beta\leftrightarrow \beta_1$ at each step eventually exhausts all incident edges to $\beta$.}
\label{fig:cycleoptions}
\end{figure}

As with our previous algorithms, we first show that the labeling produced by this algorithm is unique in some sense with respect to a fixed labeling of $\C_k(G)$ in the case when $k>\chi(G)$. Subsequently, we will show that it implies something structural about a version of the labeled link graph in the $\C_{\chi'}(G')$, $\chi'=\chi(G')$ interpretation under Hypothesis~\ref{hyp:running}.

In Definition~\ref{def:labeled-link-graph}, the edge labeling in $\mathcal{L}_{[\alpha]}$ is uniquely determined because we implicitly fix a bijection $\alpha\mapsto\alpha_G$. Since we want Algorithm~\ref{algo:LLG} to apply to an abstract coloring graph $\mathcal{C}$, there could be multiple labelings of $\mathcal{L}_{[\alpha]}$, depending on the choice of bijection $\alpha\mapsto\alpha_G$. Algorithms~\ref{algo:link-vertex} and \ref{algo:partitions} fix such a mapping, and Algorithm~\ref{algo:LLG} labels the labeled link graph consistent with this choice in its Step~\ref{step:alg1-labels} and Step~\ref{step:alg2-labels}.

\begin{lemma}\label{lem:unique-LLG-labeling} Let $\C\cong \C_k(G)$ with $k>\chi(G)$ and let $\alpha$ be an abstract link vertex. Then Algorithm~\ref{algo:LLG}, when run on $\mathcal{C}$ and $\alpha$, exactly identifies $\mathcal{L}_{[\alpha]}$, where the edge labeling in $\mathcal{L}_{[\alpha]}$ is consistent with the labeling of $\mathcal{C}$ produced by Algorithm~\ref{algo:link-vertex} in Step~\ref{step:alg1-labels}. 
\end{lemma}

\begin{proof}
First, Algorithm~\ref{algo:LLG} finds all vertices of $\mathcal{L}_{[\alpha]}$, because all equivalent link $k$-colorings of $G$ are connected by a sequence of hypercubes by Corollaries~\ref{cor:equivalent-links-connected} and \ref{cor:equivalent-links-connected-hypercubes}. Second, Algorithm~\ref{algo:LLG}  correctly identifies each abstract link vertex as being equivalent to $\alpha$ by the correctness of Algorithm~\ref{algo:partitions}. It remains to show that Algorithm~\ref{algo:LLG} consistently labels all edges in $\mathcal{L}_{[\alpha]}$.

Every vertex in $\mathcal{L}_{[\alpha]}$ has degree $\sum_{\ell=1}^d\chi(H_{\ell})(k-\chi(H_{\ell}))$. Indeed, for each connected component $H_{\ell}$, any of its $\chi(H_{\ell})$ parts can change to any of the $k-\chi(H_{\ell})$ free colors. Step~\ref{step:findalledges} finds and labels all of these edges, as (a) labels $\chi(H_i)-1$ edges; (b) labels $(\chi(H_i)-1)(k-\chi(H_i)-1)$ edges; (c) labels $(k-\chi(H_i)-1)$ edges; (d) labels $\sum_{\ell\neq i}\chi(H_{\ell})(k-\chi(H_{\ell}))$ edges.  These edges are all distinct. Summing these counts, and including the edge that connects to the predecessor, gives the desired total degree.

Moreover, we claim that these edge labels are uniquely determined and hence correct. For Step~\ref{step:findalledges}(a), we consistently label every 6-cycle by Lemma~\ref{lem:length-4-path}.  For Step~\ref{step:findalledges}(b), we are recoloring two parts in the same connected component to different colors. There is a unique fourth vertex where both color changes happen, completing the induced square.  Therefore, we can consistently label these color changes.  Similar logic applies to Step~\ref{step:findalledges}(d), since color changes on parts in two distinct components $H_i$ and $H_{\ell}$ ($\ell\neq i$) are independent. Lastly, for Step~\ref{step:findalledges}(c), if two edges of a $3$-cycle are already labeled (representing a single part changing to two distinct free colors), we can uniquely label the third edge.

Finally, this labeling is consistent across all vertices $\beta$ in $[\alpha]$ because Algorithm~\ref{algo:LLG} relies on a breadth-first search (BFS) in Step~\ref{step:findalledges}. Starting with the canonical labeling at $\alpha$, the BFS ensures that when any vertex $\beta$ is processed, a neighboring vertex (its predecessor) has already been fully and consistently labeled, providing a valid reference.  To be exact, in parts (b) and (d), only $\beta_1$ could be added to $Q$, and we are guaranteed a predecessor $\beta$ as we are currently processing it. In part (a), we add $\beta_1,\beta_2,\beta_3$ to the queue in this order; since $\beta_1$ is adjacent to $\beta$ (which is being processed), each vertex has a predecessor when dequeued. \end{proof}

We have shown that, given a labeling of a link $k$-coloring and all of its hypercube neighbors, we can uniquely construct the entire labeled link graph. Shifting to Hypothesis~\ref{hyp:running}, where $\mathcal{C}_k(G)\cong\mathcal{C}_{\chi'}(G')$, we aim to understand the corresponding proper $\chi'$-colorings of $G'$ and their transformations across this labeled link graph structure. By Lemma~\ref{lem:partition-G'}, every abstract link vertex of $\mathcal{C}_k(G)$ defines an analogous partition on a subgraph of $G'$. However, the precise color changes throughout the entire labeled link graph in the $G'$ context are not yet clear. In particular, it is not yet apparent whether the subgraph of $G'$ undergoing color changes remains fixed across all vertices of the labeled link graph, or whether more than $k$ colors are involved. The following definition introduces $\mathcal{L}'_{[\alpha]}$, an alternative version of the labeled link graph, whose edge labels record the color changes of the underlying proper $\chi'$-colorings in $G'$.

Each labeled edge in $\mathcal{L}_{[\alpha]}$ indicates that there is a hypercube between the two link vertices in $\mathcal{C}$. By Lemma~\ref{lem:partition-G'}, the same hypercube structure implies that the underlying colorings of $G'$ also differ in the color of exactly one part. To keep track of this information, we create a new labeled graph.

\begin{definition}
    Assume Hypothesis~\ref{hyp:running}. Given an abstract link vertex $\alpha$ in $\mathcal{C}$, we define $\mathcal{L}'_{[\alpha]}$ as follows. The graph $\mathcal{L}'_{[\alpha]}$ shares the same vertex set and (unlabeled) edge set with $\mathcal{L}_{[\alpha]}$. For each edge $\beta\xleftrightarrow{c_1 P^i_{j}c_2} \gamma$ in $\mathcal{L}_{[\alpha]}$, if $\beta_{G'}$ and $\gamma_{G'}$ differ in the color on part $Q^{i}$, we label the edge $\beta\xleftrightarrow{c'_1 Q^{i} c'_2} \gamma$ accordingly in $\mathcal{L}'_{[\alpha]}$. 
\end{definition}

Our next goal is to show that the edge labels in $\mathcal{L}'_{[\alpha]}$ have a natural correspondence to the edge labels in $\mathcal{L}_{[\alpha]}$. The identification of $3$-, $4$-, and $6$-cycles plays a crucial role for labeling $\mathcal{L}_{[\alpha]}$ via Algorithm~\ref{algo:LLG}. The following two lemmas assert that our interpretation of these cycles in $\mathcal{L}'_{[\alpha]}$ is the natural one. Lemma~\ref{lem:hexagons-in-linked-graph} shows that the $6$-cycles (which are not part of a 3-cube) in $\mathcal{L}'_{[\alpha]}$ correspond to two fixed parts cycling between three colors. Lemma~\ref{lem:squares-in-linked-graph} states that $4$-cycles in $\mathcal{L}'_{[\alpha]}$ arise from two fixed parts independently changing colors, while $3$-cycles involve one fixed part changing among three colors.

\begin{lemma}\label{lem:hexagons-in-linked-graph} Assume Hypothesis~\ref{hyp:running}. Let $\alpha, \alpha_1, \alpha_2 \in V(\mathcal{L}'_{[\alpha]})$ be vertices with  the edge $\alpha\alpha_1$ labeled $c_1Q_{1}^ic_3$ and the edge $\alpha\alpha_2$ labeled $c_2 Q_{2}^ic_3$. If $\alpha_1$ and $\alpha_2$ are non-adjacent, and $\alpha$ is their \emph{only} common neighbor, then $\mathcal{L}'_{[\alpha]}$ has a unique 6-cycle containing the path $\alpha_1\alpha\alpha_2$, and its edge labels are given by:

\centering
\begin{tikzpicture}[
    vertex/.style={},
    edge_label/.style={
        sloped,   
        midway,   
        auto,    
        font=\small 
    },
    edge_style/.style={
        <->, 
        >=Latex 
    }
  ]
  
  \def\hexagonradius{2.1cm}

  \node[vertex] (alpha)  at (90:\hexagonradius) {$\alpha$};
  \node[vertex] (alpha1) at (30:\hexagonradius) {$\alpha_1$};
  \node[vertex] (beta)   at (-30:\hexagonradius) {$\beta$};
  \node[vertex] (gamma)  at (-90:\hexagonradius) {$\gamma$};
  \node[vertex] (delta)  at (-150:\hexagonradius) {$\delta$};
  \node[vertex] (alpha2) at (150:\hexagonradius) {$\alpha_2$}; 
  \draw[edge_style] (alpha) -- node[edge_label, swap] {$c_1Q^i_{1}c_3$} (alpha1);
  \draw[edge_style] (alpha1) -- node[edge_label, swap] {$c_2Q^i_{2}c_1$} (beta);
  \draw[edge_style] (beta) -- node[edge_label] {$c_2Q^i_{1}c_3$} (gamma);
  \draw[edge_style] (gamma) -- node[edge_label] {$c_3Q^i_{2}c_1$} (delta);
  \draw[edge_style] (delta) -- node[edge_label, swap] {$c_2Q^i_{1}c_1$} (alpha2);
  \draw[edge_style] (alpha2) -- node[edge_label, swap] {$c_3Q^i_{2}c_2$} (alpha);
\end{tikzpicture}
\end{lemma}
\begin{proof}
The vertices $\alpha, \alpha_1, \alpha_2$ in $\mathcal{L}'_{[\alpha]}$ have corresponding elements in $\mathcal{L}_{[\alpha]}$. By Lemma~\ref{lem:length-4-path} applied to $\mathcal{L}_{[\alpha]}$, the path $\alpha_1\alpha\alpha_2$ extends uniquely to an induced 6-cycle $(\alpha, \alpha_1, \beta, \gamma, \delta, \alpha_2, \alpha)$. For every vertex in the 6-cycle in $\mathcal{L}_{[\alpha]}$, the two outgoing incident edges in the 6-cycle represent changes to the same free color. This property continues to hold in the labeling of this 6-cycle in $\mathcal{L}'_{[\alpha]}$, as common free colors can be identified in the coloring graph.   
In particular, a part changing to a free color in $\alpha_1\to \beta$ agrees with the free color $\alpha_1 \to \alpha$, which is $c_1$. Figure~\ref{fig:6-cycle-analysis} displays this partial information as $ \alpha_1\xleftrightarrow{?Wc_1}  \beta$ for some subset $W$ of vertices of $G'$.  We can similarly deduce the partial information about edge  $ \alpha_2\xleftrightarrow{?Zc_2}  \delta$. By the structure of $6$-cycle of cubes in $\mathcal{L}_{[\alpha]}$, the dimensions of the cubes alternate. Thus, $|Z|=|Q_{1}^{i}|$ and $|W|=|Q_{2}^{i}|$. 
\begin{figure}[htbp]
\centering
\begin{tikzpicture}[
    vertex/.style={},
    edge_label/.style={
        sloped,   
        midway,   
        auto,    
        font=\small 
    },
    edge_style/.style={
        <->, 
        >=Latex 
    }
  ]
  
  \def\hexagonradius{2.1cm}

  \node[vertex] (alpha)  at (90:\hexagonradius) {$\alpha$};
  \node[vertex] (alpha1) at (30:\hexagonradius) {$\alpha_1$};
  \node[vertex] (beta)   at (-30:\hexagonradius) {$\beta$};
  \node[vertex] (gamma)  at (-90:\hexagonradius) {$\gamma$};
  \node[vertex] (delta)  at (-150:\hexagonradius) {$\delta$};
  \node[vertex] (alpha2) at (150:\hexagonradius) {$\alpha_2$}; 
  \draw[edge_style] (alpha) -- node[edge_label, swap] {$c_1Q^i_{1}c_3$} (alpha1);
  \draw[edge_style] (alpha1) -- node[edge_label, swap] {?$Wc_1$} (beta);
  \draw[edge_style] (beta) -- node[edge_label] {???} (gamma);
  \draw[edge_style] (gamma) -- node[edge_label] {???} (delta);
  \draw[edge_style] (delta) -- node[edge_label, swap] {$c_2Z?$} (alpha2);
  \draw[edge_style] (alpha2) -- node[edge_label, swap] {$c_3Q^i_{2}c_2$} (alpha);

\end{tikzpicture}
\caption{Partial information}
\label{fig:6-cycle-analysis}
\end{figure}

We claim that $W=Q_{2}^{i}$. Suppose, to the contrary, that $W\neq Q_2^i$. Since $|W|=|Q_{2}^{i}|$, it follows that 
\begin{equation}\label{eq:6-cycle-size-comparison}
    |W\cap Q_{2}^{i}|<|Q_{2}^{i}| \quad \text{ and } \quad  Q_{2}^{i}\setminus W\neq\emptyset.
\end{equation} 

At vertex $\beta$, we have $\beta_{G'}(Q_1^i)=c_3$, $\beta_{G'}(Q_2^i\setminus W)=c_2$, and $\beta_{G'}(W)=c_1$. For the cycle to reach $\alpha_2$, the colors of these three distinct vertex sets ($Q_1^i$, $Q_2^i\setminus W$, and $W$) must all eventually change along the path segment $\beta \to \gamma \to \delta \to \alpha_2$. Specifically, over these three edges $\beta\gamma$, $\gamma\delta$ and $\delta\alpha_2$, there must be exactly one edge where vertices colored $c_1$ (those in $W$) change, one where vertices colored $c_2$ (those in $Q_2^i\setminus W$) change, and one where vertices colored $c_3$ (those in $Q_1^i$) change. Since the edge $\delta\alpha_2$ is already labeled by taking vertices colored $c_2$ to a new color, the vertices in $Q_2^i\setminus W$ must change on this edge back to $c_3$.  So, we may write $Z=(Q_2^i\setminus W)\uplus Z'$ for some set $Z'$ (allowing for the possibility that $Z'=\emptyset$).

Next, observe that the shortest path between abstract link vertices $\alpha$ and $\delta$ in the actual coloring graph $\C$ is $|Q_1^i|+|Q_2^i|$. There cannot be a shorter path, since $\alpha_G$ and $\delta_G$ differ on exactly $|Q_1^i|+|Q_2^i|$ vertices; these edges are correctly labeled as changing two distinct parts of dimensions $|Q_1^i|$ and $|Q_2^i|$ in $\mathcal{L}_{[\alpha]}$. However, with our assumption $W\neq Q_{2}^{i}$, we construct a shorter path from $\alpha$ to $\delta$. Let $\tilde{Q}_2^i=W\cap Q_2^i$.  Starting at $\alpha$, consider the sequence of color changes given by $c_2\tilde{Q}_2^ic_3$ and then $c_3 Z'c_2$ to reach $\delta$. Note that $|\tilde{Q}^{i}_2| = |W\cap Q_{2}^{i}|<|Q_{2}^{i}|$ and $|Z'|<|Z|=|Q_{1}^{i}|$ using $Q_2^{i}\setminus W\neq\emptyset$ from \eqref{eq:6-cycle-size-comparison}. We produced a path from $\alpha$ to $\delta$ of length $|Z'|+|\tilde{Q}_{2}^{i}|<|Q_{1}^{i}|+|Q_{2}^{i}|$, a contradiction. 

\begin{figure}
\centering 
\begin{tikzpicture}[remember picture, 
    vertex/.style={},
    edge_label/.style={
        sloped,    
        midway,    
        auto,      
        font=\small 
    },
    edge_style/.style={
        <->, 
        >=Latex 
    }
  ]
 
  \def\hexagonradius{2.1cm}

  \node[vertex] (alpha)  at (90:\hexagonradius) {$\alpha$};
  \node[vertex] (alpha1) at (30:\hexagonradius) {$\alpha_1$};
  \node[vertex] (beta)   at (-30:\hexagonradius) {$\beta$};
  \node[vertex] (gamma)  at (-90:\hexagonradius) {$\gamma$};
  \node[vertex] (delta)  at (-150:\hexagonradius) {$\delta$};
  \node[vertex] (alpha2) at (150:\hexagonradius) {$\alpha_2$}; 
  \draw[edge_style] (alpha) -- node[edge_label, swap] {$c_1Q^i_{1}c_3$} (alpha1);
  \draw[edge_style] (alpha1) -- node[edge_label, swap] {$c_2 Q_2^{i} c_1$} (beta);
  \draw[edge_style] (beta) -- node[edge_label] {$c_2$??} (gamma);
  \draw[edge_style] (gamma) -- node[edge_label] {??$c_1$} (delta);
  \draw[edge_style] (delta) -- node[edge_label, swap] {$c_2Q_{1}^{i} c_1$} (alpha2);
  \draw[edge_style] (alpha2) -- node[edge_label, swap] {$c_3 Q^i_{2}c_2$} (alpha);
  \coordinate (pic1right) at (current bounding box.east);

\end{tikzpicture}
\qquad \qquad
\begin{tikzpicture}[remember picture, 
    vertex/.style={},
    edge_label/.style={
        sloped,    
        midway,    
        auto,      
        font=\small 
    },
    edge_style/.style={
        <->, 
        >=Latex 
    }
  ]
 
  \def\hexagonradius{2.1cm}

  \node[vertex] (alpha)  at (90:\hexagonradius) {$\alpha$};
  \node[vertex] (alpha1) at (30:\hexagonradius) {$\alpha_1$};
  \node[vertex] (beta)   at (-30:\hexagonradius) {$\beta$};
  \node[vertex] (gamma)  at (-90:\hexagonradius) {$\gamma$};
  \node[vertex] (delta)  at (-150:\hexagonradius) {$\delta$};
  \node[vertex] (alpha2) at (150:\hexagonradius) {$\alpha_2$}; 
  \draw[edge_style] (alpha) -- node[edge_label, swap] {$c_1Q^i_{1}c_3$} (alpha1);
  \draw[edge_style] (alpha1) -- node[edge_label, swap] {$c_2 Q_2^{i} c_1$} (beta);
  \draw[edge_style] (beta) -- node[edge_label] {$c_2 Q^{i}_1 c_3$} (gamma);
  \draw[edge_style] (gamma) -- node[edge_label] {$c_3 Q^{i}_2 c_1$} (delta);
  \draw[edge_style] (delta) -- node[edge_label, swap] {$c_2Q_{1}^{i} c_1$} (alpha2);
  \draw[edge_style] (alpha2) -- node[edge_label, swap] {$c_3 Q^i_{2}c_2$} (alpha);

  \coordinate (pic2left) at (current bounding box.west);

\end{tikzpicture}

\begin{tikzpicture}[remember picture, overlay]
  \path (pic1right) -- (pic2left) node[midway] (arrowmidpoint) {};
  \draw[->, >=Latex, thick, black]
    ($(pic1right)+(0.1cm,0)$) -- node[above, font=\small, black] {} ($(pic2left)+(-0.1cm,0)$); 
\end{tikzpicture}
\caption{Identifying the final two edges in the $6$-cycle}\label{fig:6-cycle-analysis-final-edges}
\end{figure}

Therefore, $W=Q_2^i$, and the edge $\alpha_1\beta$ is labeled $c_2Q_2^ic_1$. By identical reasoning, we conclude $Z=Q_{1}^{i}$ and the edge $\alpha_2\delta$ is labeled $c_1Q_1^ic_2$. We only have two remaining edges to identify as in Figure~\ref{fig:6-cycle-analysis-final-edges} (left). Both $Q_{1}^{i}$ and $Q_{2}^{i}$ must return to their original color as we go around the cycle. As the $\beta\gamma$ and $\gamma\delta$ edges already have one of the colors fixed in their edge label, there is only one possible way to label these two edges $\delta \gamma$ and $\gamma\beta$; see Figure~\ref{fig:6-cycle-analysis-final-edges} (right). \end{proof}
 
\begin{lemma}\label{lem:squares-in-linked-graph} Assume Hypothesis~\ref{hyp:running}.  Let $\alpha, \alpha_1, \alpha_2 \in V(\mathcal{L}'_{[\alpha]})$ be distinct vertices such that $\alpha$ is adjacent to both $\alpha_1$ and $\alpha_2$. 
\begin{enumerate}
    \item If $\alpha\alpha_1$ has label $c_1Q_{1}^ic_3$ and $\alpha\alpha_2$ has label $c_2 Q_{2}^ic_4$, where $c_1,c_2,c_3$, and $c_4$ are all distinct, and $Q_{1}^i$ and $Q_2^{i}$ are disjoint sets, then there is a unique induced $4$-cycle in $\mathcal{L}'_{[\alpha]}$ that contains these two edges and its labels are given by:
     \[\alpha \xleftrightarrow{c_1Q^i_{1}c_3}  \alpha_1\xleftrightarrow{c_2Q^i_{2}c_4} \beta\xleftrightarrow{c_3Q^i_{1}c_1}\alpha_2\xleftrightarrow{c_4Q^i_{2}c_2}  \alpha.
\]
    \item If $\alpha\alpha_1$ has label $c_1Q_{1}^ic_3$ and $\alpha\alpha_2$ has label $c_1 Q_{1}^ic_4$, where $c_1,c_3$, and $c_4$ are all distinct, then there is a unique $3$-cycle in $\mathcal{L}'_{[\alpha]}$ that contains these two edges and its labels are given by: 
    \[\alpha \xleftrightarrow{c_1Q^i_{1}c_3}  \alpha_1\xleftrightarrow{c_3Q^i_{1}c_4} \alpha_2\xleftrightarrow{c_4Q^i_{1}c_1}  \alpha.
\]
    \item If $\alpha\alpha_1$ has label $c_1Q_{1}^ic_3$ and $\alpha\alpha_2$ has label $c_2 Q_{2}^jc_4$ for $i\neq j$, where necessarily $c_1\neq c_3$ and $c_2\neq c_4$, and $Q_1^i$ and $Q_2^j$ are disjoint sets, then there is a unique induced $4$-cycle in $\mathcal{L}'_{[\alpha]}$ that contains these two edges and its labels are given by: \[\alpha \xleftrightarrow{c_1Q^i_{1}c_3}  \alpha_1\xleftrightarrow{c_2Q^j_{2}c_4} \beta\xleftrightarrow{c_3Q^i_{1}c_1}\alpha_2\xleftrightarrow{c_4Q^j_{2}c_2}  \alpha.
\]
\end{enumerate}
\end{lemma}
\begin{proof}
We prove each statement independently below.
\begin{enumerate}
    \item We are given $\alpha_{G'}(Q^i_1)=c_1, \alpha_{G'}(Q^i_2)=c_2$, with edges $\alpha \xleftrightarrow{c_1Q^i_1c_3} \alpha_1$ and $\alpha \xleftrightarrow{c_2Q^i_2c_4} \alpha_2$. Thus, $(\alpha_1)_{G'}(Q^i_1)=c_3, (\alpha_1)_{G'}(Q^i_2)=c_2$ and $(\alpha_2)_{G'}(Q^i_1)=c_1, (\alpha_2)_{G'}(Q^i_2)=c_4$.
    Since $Q^i_1, Q^i_2$ are disjoint parts and $c_1,c_2,c_3,c_4$ are distinct, the recoloring operations $Q^i_1\colon c_1 \to c_3$ and $Q^i_2\colon c_2 \to c_4$ are independent. Let $\beta$ be the coloring where $\beta_{G'}(Q^i_1)=c_3$ and $\beta_{G'}(Q^i_2)=c_4$ (other parts as in $\alpha$). This $\beta$ is proper as $c_3 \neq c_4$. The edges $\alpha_1 \xleftrightarrow{c_2Q^i_2c_4} \beta$ and $\beta \xleftrightarrow{c_3Q^i_1c_1} \alpha_2$ complete the stated unique induced $4$-cycle $\alpha \xleftrightarrow{c_1Q^i_1c_3} \alpha_1 \xleftrightarrow{c_2Q^i_2c_4} \beta \xleftrightarrow{c_3Q^i_1c_1} \alpha_2 \xleftrightarrow{c_4Q^i_2c_2} \alpha$.
    \item  We are given $\alpha_{G'}(Q^i_1)=c_1$, with edges $\alpha \xleftrightarrow{c_1Q^i_1c_3} \alpha_1$ and $\alpha \xleftrightarrow{c_1Q^i_1c_4} \alpha_2$. Thus, $(\alpha_1)_{G'}(Q^i_1)=c_3$ and $(\alpha_2)_{G'}(Q^i_1)=c_4$, while all other parts are colored as in $\alpha$. Since $c_3 \neq c_4$, $\alpha_1$ and $\alpha_2$ differ only on $Q^i_1$ and are thus adjacent via the edge $\alpha_1 \xleftrightarrow{c_3Q^i_1c_4} \alpha_2$. Together with the edge $\alpha_2 \xleftrightarrow{c_4Q^i_1c_1} \alpha$ (returning $Q^i_1$ to its color in $\alpha$), this forms the unique induced $3$-cycle $\alpha \xleftrightarrow{c_1Q^i_1c_3} \alpha_1 \xleftrightarrow{c_3Q^i_1c_4} \alpha_2 \xleftrightarrow{c_4Q^i_1c_1} \alpha$.
    \item We are given $\alpha_{G'}(Q^i_1)=c_1, \alpha_{G'}(Q^j_2)=c_2$, with initial edges $\alpha \xleftrightarrow{c_1Q^i_1c_3} \alpha_1$ and $\alpha \xleftrightarrow{c_2Q^j_2c_4} \alpha_2$. Here $Q^i_1$ and $Q^j_2$ are disjoint vertex sets, $c_1 \neq c_3$, and $c_2 \neq c_4$. The recolorings $Q^i_1\colon c_1 \to c_3$ and $Q^j_2\colon c_2 \to c_4$ are independent. If $c_3 \neq c_4$, the reasoning from part (1) applies directly: the independent operations with distinct target colors define the unique induced 4-cycle and its labeling as stated.
    
   Consider the case $c_3 = c_4 = c$. We have $\alpha_{G'} \equiv (Q^i_1\colon c_1, Q^j_2\colon c_2)$, $(\alpha_1)_{G'} \equiv (Q^i_1\colon c, Q^j_2\colon c_2)$, and $(\alpha_2)_{G'} \equiv (Q^i_1\colon c_1, Q^j_2\colon c)$. These vertices complete to a 4-cycle in $\mathcal{L}'_{[\alpha]}$ because the matching $4$-cycle exists in $\mathcal{L}_{[\alpha]}$. Let $\beta$ be the fourth vertex in this 4-cycle $(\alpha, \alpha_1, \beta, \alpha_2,\alpha)$.
    The $\alpha$-avoiding path from $\alpha_1$ to $\alpha_2$ is $\alpha_1 \to \beta \to \alpha_2$. To change from $(\alpha_1)_{G'} \equiv (Q^i_1\colon c, Q^j_2\colon c_2)$ to $(\alpha_2)_{G'} \equiv (Q^i_1\colon c_1, Q^j_2\colon c)$, part $Q^i_1$ must change from $c$ to $c_1$, and part $Q^j_2$ must change from $c_2$ to $c$.
    These two changes must occur over the two edges $(\alpha_1, \beta)$ and $(\beta, \alpha_2)$. There are two ways to assign these changes to the edges:
    \begin{enumerate}
        \item Path $T_1$: $\alpha_1 \xleftrightarrow{cQ^i_1c_1} \beta \xleftrightarrow{c_2Q^j_2c} \alpha_2$.
        Here, $\beta$ would have $Q^i_1$ colored $c_1$ and $Q^j_2$ colored $c_2$. Thus, $\beta_{G'} \equiv (Q^i_1\colon c_1, Q^j_2\colon c_2)$, which is $\alpha$. This is a contradiction, because $\beta$ is distinct from $\alpha$.
        \item Path $T_2$: $\alpha_1 \xleftrightarrow{c_2Q^j_2c} \beta \xleftrightarrow{cQ^i_1c_1} \alpha_2$.
        Here, from $(\alpha_1)_{G'} \equiv (Q^i_1\colon c, Q^j_2\colon c_2)$, recoloring $Q^j_2\colon c_2 \to c$ yields $\beta_{G'} \equiv (Q^i_1\colon c, Q^j_2\colon c)$. This path $T_2$ avoids $\alpha$, so this is the correct labeling.
    \end{enumerate}
    Therefore, $T_2$ provides the unique labeling for the $\alpha$-avoiding path segment $\alpha_1 \to \beta \to \alpha_2$. Reinstating $c$ as $c_3$ (for $Q^i_1$'s target) and $c_4$ (for $Q^j_2$'s target, which is also $c_3$ in this subcase), the labels become $\alpha_1 \xleftrightarrow{c_2Q^j_2c_4} \beta$ and $\beta \xleftrightarrow{c_3Q^i_1c_1} \alpha_2$.
    This completes the 4-cycle as stated in the lemma: $\alpha \xleftrightarrow{c_1Q^i_1c_3} \alpha_1 \xleftrightarrow{c_2Q^j_2c_4} \beta \xleftrightarrow{c_3Q^i_1c_1} \alpha_2 \xleftrightarrow{c_4Q^j_2c_2} \alpha$. In retrospect, the properness of $\beta$, ensured by the existence of this 4-cycle structure, implies that if $c_3=c_4$, no edge exists between $Q^i_1$ and $Q^j_2$. \qedhere
\end{enumerate}
\end{proof}

Building upon Lemmas~\ref{lem:hexagons-in-linked-graph} and \ref{lem:squares-in-linked-graph}, we find a natural correspondence between $\mathcal{L}_{[\alpha]}$ and $\mathcal{L}'_{[\alpha]}$. To formalize this correspondence, the following lemma introduces functions describing how a proper $k$-coloring $\alpha_{G}$ is lifted to a $\chi'$-coloring $\alpha_{G'}$ on a subgraph of $G'$ isomorphic to $G$. Recall that $H_1,\dots, H_d$ are the connected components of $G$, identified and labeled by Algorithm~\ref{algo:link-vertex} run on $\mathcal{C}_k(G)$ and $\alpha$. For each $H_i$, we define two mappings: $\psi_i$ and $\phi_i$. The function $\psi_i$ translates colors from the palette $[k]$ used in the $G$-interpretation of $\C$ to distinct colors in a specific palette $\mathcal{P}_i\subseteq [\chi']$ for the $G'$-interpretation. The function $\phi_i$ maps each vertex in $H_i$ to a vertex in $G'$. We encountered these two functions in Lemma~\ref{lem:partition-G'} at a single abstract link vertex. The following key lemma establishes that $\psi_i$ and $\phi_i$ can be chosen consistently for every link vertex within the same equivalence class. 

\begin{lemma}\label{lem:gluing-lifting-data}
Assume Hypothesis~\ref{hyp:running}.  For each $i\in[d]$ there exists a subset $\mathcal P_i\subseteq [\chi']$ with $\abs{\mathcal P_i}=k$ and two functions $\psi^{[\alpha]}_{i}$  and $\phi^{[\alpha]}_{i}$ such that:
\begin{enumerate}
    \item $\psi^{[\alpha]}_{i}\colon [k]\to \mathcal{P}_i$ is a bijection.
    \item $\phi^{[\alpha]}_{i}\colon H_i\to H'_i$ is a graph isomorphism. 
    \item\label{item:LLG:comp} For each labeled edge $\beta\xleftrightarrow{c_1 P^i_{j}c_2} \gamma$ in $\mathcal{L}_{[\alpha]}$, the corresponding labeled edge in $\mathcal{L}'_{[\alpha]}$ is given by $\beta\xleftrightarrow{\psi_i^{[\alpha]}(c_1) \phi_i^{[\alpha]}(P^i_{j})\psi_i^{[\alpha]}(c_2)} \gamma$.
\end{enumerate}
We use $(\psi^{[\alpha]}, \phi^{[\alpha]})$ as a shorthand for the collection of pairs $(\psi_i^{[\alpha]}, \phi_i^{[\alpha]})$ for $1\leq i\leq d$.
\end{lemma}

\begin{proof}
By Lemma~\ref{lem:subgraphs}, there exist vertex-disjoint subgraphs $H'_1, \dots, H'_d$ in $G'$ such that $H'_i\cong H_i$ for each $i\in [d]$. The map $\phi_i^{[\alpha]}\colon H_i\to H'_{i}$ can be constructed by comparing edges incident to $\alpha$ in $\mathcal{C}_{k}(G)\cong \mathcal{C}_{\chi'}(G')$, as in the proof of Lemma~\ref{lem:subgraphs}. By Lemma~\ref{lem:partition-G'}, for each $i\in[d]$ there exists an injection $\psi_i^{[\alpha]}\colon [k]\to [\chi']$ such that $\alpha_{G'}(\phi_i^{[\alpha]}(v))=\psi_i^{[\alpha]}\left(\alpha_G(v)\right)$ for each vertex $v\in V(H_i)$. As of yet, $\psi_i^{[\alpha]}$ is not uniquely defined for any of the free colors, those colors that do not appear in $\alpha_G$.  
By inspection of incident edges to $\alpha$ in $\mathcal{C}_{k}(G)\cong \mathcal{C}_{\chi'}(G')$, we can define $\psi_i^{[\alpha]}$ to be compatible for free colors. More precisely, for each $i\in [d]$, and each free color $c$ at $\alpha_G$ within $H_i$, consider $\beta_{G}$ obtained from $\alpha_{G}$ by recoloring all the vertices in a single independent part $P_j^{i}$ from $\alpha_{G}(P_{j}^{i})$ to $c$. We define $\psi_{i}^{[\alpha]}(c)$ to be $\beta_{G'}(\phi_i^{[\alpha]}(P_{j}^{i}))$. This ensures the relation $\beta_{G'}(\phi_i^{[\alpha]}(v))=\psi_i^{[\alpha]}\left(\beta_G(v)\right)$ for every $\beta$ that is a neighbor of $\alpha$ in $\mathcal{L}_{[\alpha]}$. After defining $\mathcal{P}_{i} = \psi_i^{[\alpha]}([k])$, the map $\psi_{i}^{[\alpha]}\colon [k]\to \mathcal{P}_i$ is a bijection. 

By construction, the desired condition~\eqref{item:LLG:comp} holds for all incident edges to $\alpha$ in $\mathcal{L}_{[\alpha]}$, meaning that:
$$
\alpha\xleftrightarrow{c_1 P^i_{j}c_2} \beta \text{ in } \mathcal{L}_{[\alpha]}  \quad\quad \Longrightarrow \quad\quad \alpha\xleftrightarrow{\psi_i^{[\alpha]}(c_1) \phi_i^{[\alpha]}(P^i_{j})\psi_i^{[\alpha]}(c_2)} \beta \text{ in } \mathcal{L}'_{[\alpha]}.
$$
Choose some neighbor $\beta$ in $\mathcal{L}'_{[\alpha]}$.  Using the same iteration process as Algorithm~\ref{algo:LLG} (Step~\ref{step:findalledges}), label all incident edges to $\beta$ in $\mathcal{L}'_{[\alpha]}$. These edges can be found through a cycle of appropriate length. Then Lemmas~\ref{lem:hexagons-in-linked-graph} and \ref{lem:squares-in-linked-graph} propagate the desired correspondence \eqref{item:LLG:comp} from labeled edges incident to $\alpha$ to those incident to $\beta$.
By continuing this process in a breadth-first ordering as in Algorithm~\ref{algo:LLG} (Step~\ref{step:findalledges}), which is correct by Lemma~\ref{lem:unique-LLG-labeling}, all edges in $\mathcal{L}'_{[\alpha]}$ will be labeled using the correspondence given by the pairs $(\psi_i^{[\alpha]}, \phi_i^{[\alpha]})$.
\end{proof}

Examining the proof of Lemma~\ref{lem:gluing-lifting-data}, we observe that $\psi^{[\alpha]}$ is constructed precisely in a way to satisfy $\beta_{G'} \circ \phi_i^{[\alpha]} = \psi_{i}^{[\alpha]} \circ \beta_{G}$ for each $\beta_{G}\in [\alpha_{G}]$. This \emph{compatibility relation} turns out to be important in later arguments, primarily because it lends itself well to computation. 

\begin{corollary}\label{cor:gluing-relations} Assume Hypothesis~\ref{hyp:running}. Consider the functions $(\psi_i^{[\alpha]}, \phi_i^{[\alpha]})$ from Lemma~\ref{lem:gluing-lifting-data}. For each $\beta$ in $[\alpha]$, the relation  $\beta_{G'} \circ \phi_i^{[\alpha]} = \psi_{i}^{[\alpha]} \circ \beta_{G}$ holds for each $1\leq i\leq d$.
\end{corollary}

The compatibility condition in Corollary~\ref{cor:gluing-relations} can be visualized as a commutative diagram.

\begin{center}
\begin{tikzcd}[column sep=3em]
V(H_i)  \arrow[d,swap, "\phi_i^{[\alpha]}"] \arrow[r, "\beta_{G}"] & {[k]} \arrow[d, "\psi_i^{[\alpha]}"] \\
V(H'_i) \arrow[r, swap, "\beta_{G'}"] & {\mathcal{P}_i}
\end{tikzcd}
\end{center}

Lemma~\ref{lem:gluing-lifting-data} establishes the coloring of the vertices in $V(H'_1)\uplus V(H'_2)\uplus \dots \uplus V(H'_d)$ throughout the equivalence class $[\alpha]$ when viewed inside $G'$. Let $L\subseteq V(G')$ be the complement of $V(H'_1)\uplus V(H'_2)\uplus \dots \uplus V(H'_d)$. The next corollary asserts that equivalent link colorings agree pointwise on $L$. We refer to $L$ as the \emph{locked set} associated with the equivalence class $[\alpha]$. For each vertex $\beta\in[\alpha]$, the coloring of $L$ in $\beta_{G'}$ is determined by an orbit representative $\alpha_{G'}$. This fixed assignment of colors to $L$ must be proper with respect to the way $H_1',\dots,H'_d$ are colored (as derived from $\beta_{G}$), since $\beta$ exists as a vertex in the coloring graph.

\begin{corollary}\label{cor:equivalent-links-agree-on-L} Assume Hypothesis~\ref{hyp:running}.
Suppose $\beta\in [\alpha]$. Then $\beta_{G'}(w)=\alpha_{G'}(w)$ for every $w\in L$.
\end{corollary}
\begin{proof}
From Lemma~\ref{lem:gluing-lifting-data},  $\alpha$ and $\beta$ share the same functions $(\psi^{[\alpha]},\phi^{[\alpha]})$, with $\beta_{G'}(\phi^{[\alpha]}_{i}(v))=\psi_i^{[\alpha]}(\beta_{G}(v))$ for every $v\in V(H_i)$ and $i\in [d]$. Since $\alpha$ and $\beta$ are connected in $\mathcal{L}_{[\alpha]}$, it suffices to reduce (by induction on the length of path between them) to the case when $\alpha$ and $\beta$ are adjacent in $\mathcal{L}_{[\alpha]}$. In this case, we have an edge of the form $\alpha\xleftrightarrow{c_1 P^i_{j}c_2}  \beta$ where $P^{i}_{j}\subset V(H_i)$ is the unique part on which $\alpha_{G}$ and $\beta_{G}$ differ. Specifically, $\alpha_{G}(P^{i}_{j}) = c_1$, $\beta_{G}(P^{i}_{j}) = c_2$, and $\alpha_{G}(v)=\beta_{G}(v)$ for all $v\in V(G)\setminus P^i_{j}$. Using Corollary~\ref{cor:gluing-relations}, the colorings $\alpha_{G'}$ and $\beta_{G'}$ satisfy the compatibility relations with the same pair $(\psi^{[\alpha]}, \phi^{[\alpha]})$:
$$
\alpha_{G'} \circ \phi_i^{[\alpha]} = \psi_{i}^{[\alpha]} \circ \alpha_{G} \quad \text{ and } \quad \beta_{G'} \circ \phi_i^{[\alpha]} = \psi_{i}^{[\alpha]} \circ \beta_{G}
$$
Applying these identities on the input $V(H_i)$, we obtain that $\alpha_{G'}$ and $\beta_{G'}$ differ on a single part $\phi_i^{[\alpha]}(P^{i}_{j})$. By counting the number of edges between the vertices $\alpha$ and $\beta$ in $\mathcal{C}$, we see that $\alpha_{G'}$ and $\beta_{G'}$ must agree on all vertices in $V(G')\setminus \phi_{i}^{[\alpha]}(P^{i}_{j})$. Since $L$ is disjoint from $V(H'_i)$, we get $\alpha_{G'}(w)=\beta_{G'}(w)$ for every $w\in L$.
\end{proof}

An example of the pair $(\psi^{[\alpha]},\phi^{[\alpha]})$ and the associated locked set $L$ is illustrated in the top row of Figure~\ref{fig:surgery}. We leverage the previous results to deduce more information about the $\chi'$-colorings of $G'$ at the abstract link vertices. From Lemma~\ref{lem:subgraphs}, we know that $G'$ contains $G$ as a subgraph where each connected component $H_i$ of $G$ appears as an induced subgraph $H_i'$. However, it is certainly possible for there to be an edge connecting $H'_{i}$ and $H'_{j}$ for $i\neq j$. The following corollary imposes strong restrictions in this case. It concludes that if $H_i'$ and $H_j'$ are connected by an edge, their respective color palettes $\mathcal{P}_i$ and $\mathcal{P}_{j}$ must be disjoint. Furthermore, no vertex in the locked set $L$ adjacent to a vertex in $H_i'$ can have a color from $\mathcal{P}_i$. Finally, the locked set $L$ must use every color in the $[\chi']$ palette. Figure~\ref{fig:surgery} depicts these properties. In the statement below, $N(v)$ denotes the set of neighbors of a vertex $v$ in $G'$.

\begin{corollary}\label{cor:locked-vertices}
   Assume Hypothesis~\ref{hyp:running}. Let $\alpha$ be an abstract link vertex.  Then
    \begin{enumerate}
        \item\label{item:disjoint-palettes} for every $1\le i<j\le d$, if there exist $u\in H'_i,v\in H'_j$ with $uv\in E(G')$, then $\mathcal P_i\cap \mathcal P_j=\emptyset$; 
        \item\label{item:locked-palette} for every $i\in[d], v\in H'_i$, $\alpha_{G'}(N(v)\cap L)\subseteq [\chi']\backslash \mathcal P_i$;
        \item\label{item:every-color-locked} $\alpha_{G'}(L)=[\chi']$. 
        \end{enumerate}
\end{corollary}

\begin{proof}
By Lemma~\ref{lem:gluing-lifting-data}, all $\alpha, \beta$ with $[\beta]=[\alpha]$ share the same pair $(\psi^{[\alpha]},\phi^{[\alpha]})$ defining a fixed partition $(H'_1,\dots,H'_d, L)$ of $G'$ where $H_i'\cong H_i$. 
\begin{enumerate}
    \item Let $uv\in E(G')$ with $u\in V(H'_i)$ and $v\in V(H'_j)$ for $i\neq j$. Suppose, to the contrary, that $\mathcal{P}_i\cap\mathcal{P}_j\neq \emptyset$. Take any color $c\in\mathcal{P}_i\cap\mathcal{P}_j$.  Within $[\alpha]$ we may independently permute the colors (within $k$ available colors) inside each $H_i$ and $H_j$. Consequently, we can find a link $k$-coloring $\beta_{G}$ such that the associated $\chi'$-coloring $\beta_{G'}$ simultaneously satisfies: $\beta_{G'}(u)=c$ and $\beta_{G'}(v)=c$. This contradicts the fact that $\beta_{G'}$ is a proper coloring, and $uv\in E(G')$.
    \item We show that no color $c\in \mathcal{P}_i$ appears on $N(v)\cap L$. Otherwise, choose $w\in N(v)\cap L$ with $\alpha_{G'}(w)=c$. Find $\beta_G\in [\alpha_G]$ such that $\beta_{G'}(v)=c$. This is possible because $v\in H'_i$. Since $\beta_{G'}|_{L} = \alpha_{G'}|_{L}$ by Corollary~\ref{cor:equivalent-links-agree-on-L}, we have $\beta_{G'}(w)=\alpha_{G'}(w)=c$. The resulting equality $\beta_{G'}(v)=\beta_{G'}(w)$ leads to a contradiction as $\beta_{G'}$ is a proper coloring and $vw\in E(G')$. 
    \item Suppose $\tilde{c}\in [\chi']$ is not present in $\alpha_{G'}(L)$. Find $\beta\in [\alpha]$ such that $\tilde{c}\notin \beta_{G'}(V(H'_i))$ for each $i\in [d]$. This is possible, as $\beta_{G'}(V(H'_i))=\psi_{i}^{[\alpha]}(\beta_{G}(V(H_i)))$ and $G$ has surplus colors. Now, $\tilde{c}\notin \alpha_{G'}(L) = \beta_{G'}(L)$. As $\tilde{c}\notin \beta_{G'}(V(G'))$, it follows that $\beta_{G'}$ is a proper $(\chi'-1)$-coloring of $G'$, a contradiction. \qedhere
\end{enumerate}
\end{proof}

As a final result in this subsection, we use the accumulated information regarding the proper coloring $\alpha_{G'}$ to apply the pair $(\psi^{[\alpha]},\phi^{[\alpha]})$ to any abstract link vertex, not just those in $[\alpha]$.  In particular, we can convert {\it any} link $k$-coloring of $G$ to a proper $\chi'$-coloring of $G'$ by applying the pair $(\psi^{[\alpha]},\phi^{[\alpha]})$, while retaining the colors of the locked set $L$ as in $\alpha_{G'}$.  Figure~\ref{fig:surgery} shows an example of applying $(\psi^{[\alpha]},\phi^{[\alpha]})$ to an abstract link vertex not in $[\alpha]$ and still obtaining a proper coloring that is itself an abstract link vertex.  This technique produces \emph{many} abstract link vertices from an equivalence class $[\alpha]$ of a \emph{single} abstract link vertex. This process will demonstrate that the set of abstract link vertices of $\mathcal{C}_k(G)$ cannot be in bijection with the set of abstract link vertices of $\mathcal{C}_{\chi'}(G')$, ultimately showing that Hypothesis~\ref{hyp:running} is untenable.

\begin{lemma}
\label{lem:all_extend}
Assume Hypothesis~\ref{hyp:running}. Let $\alpha,\beta$ be two abstract link vertices in $\mathcal{C}$ such that $\beta\notin[\alpha]$. Let $(\psi^{[\alpha]},\phi^{[\alpha]})$ be a pair for $[\alpha]$ from Lemma~\ref{lem:gluing-lifting-data}. Define $\widetilde{\beta}_{G'}\colon V(G')\to [\chi']$ by specifying $\widetilde{\beta}_{G'}(\phi^{[\alpha]}(v))=\psi^{[\alpha]}(\beta_G(v))$ for $v\in V(G)$ and $\widetilde{\beta}_{G'}(w)=\alpha_{G'}(w)$ for $w\in L$. Then $\widetilde{\beta}_{G'}$ is a proper coloring of $G'$.  Moreover, the corresponding vertex $\widetilde{\beta}\in\C$ (which may or may not be $\beta$) is an abstract link vertex satisfying  $\widetilde{\beta}\notin[\alpha]$.
\end{lemma}

\begin{proof}

We first establish that $\widetilde{\beta}_{G'}$ is a proper $\chi'$-coloring. The domain of $\widetilde{\beta}_{G'}$ is 
$$
V(G') = V(H'_1) \uplus \dots \uplus V(H'_{d})\uplus L.
$$
We carefully check properness of $\widetilde{\beta}_{G'}$ on the set $E(G')$ by separating the edges into four types.
\begin{itemize}
    \item \emph{Edges within $H'_{i}$:} For each $u\in H'_i$, write $u=\phi_i^{[\alpha]}(v)$ for some $v\in V(H_i)$. Then $\widetilde{\beta}_{G'}(u)=\psi_i^{[\alpha]}(\beta_G(v))$. Since $\beta_G$ is proper on $H_i$, $\phi_i^{[\alpha]}\colon H_i\to H'_i$ is an isomorphism, and $\psi_i^{[\alpha]}$ is injective, it follows that $\widetilde{\beta}_{G'}$ preserves properness on $E(H'_i)$.
    \item \emph{Edges between $H'_{i}$ and $H'_{j}$ for $i\neq j$:} Suppose $uv\in E(G')$ with $u\in H'_i$ and $v\in H'_j$. Corollary~\ref{cor:locked-vertices}\eqref{item:disjoint-palettes} implies $\mathcal P_{i}\cap \mathcal P_{j}=\emptyset$. Since $\widetilde{\beta}_{G'}(u)\in \mathcal P_{i}$ and $\widetilde{\beta}_{G'}(v)\in \mathcal P_{j}$, we deduce that $\widetilde{\beta}_{G'}(u)\neq \widetilde{\beta}_{G'}(v)$. 
    \item \emph{Edges between $H'_{i}$ and $L$:} Suppose $uw\in E(G')$ with $u\in H'_{i}$ and $w\in L$. By Corollary~\ref{cor:locked-vertices}\eqref{item:locked-palette}, the image set $\alpha_{G'}(N(u)\cap L)=\widetilde{\beta}_{G'}(N(u)\cap L)$ is disjoint from $\mathcal{P}_{i}$. Since $w\in N(u)\cap L$ and $\widetilde{\beta}_{G'}(u)\in \mathcal{P}_{i}$, it follows that $\widetilde{\beta}_{G'}(w)\neq \widetilde{\beta}_{G'}(u)$.
    \item \emph{Edges within $L$:} The coloring $\widetilde{\beta}_{G'}|_{L}$ is identical to $\alpha_{G'}|_{L}$. Since $\alpha_{G'}$ is a proper coloring, $\widetilde{\beta}_{G'}$ is also proper on the subgraph induced by $L$.
\end{itemize}
Next, we argue that $\widetilde{\beta}\in \mathcal{C}$ (the vertex corresponding to the $\chi'$-coloring $\widetilde{\beta}_{G'}$) is an abstract link vertex. When verifying that $\beta$ is an abstract link vertex, Algorithm~\ref{algo:link-vertex} checks the clique-neighborhood around the vertex $\beta$ corresponding to the coloring $\beta_G$. The same check applies to $\widetilde{\beta}$ because $\phi^{[\alpha]}$ is a graph isomorphism, $\psi^{[\alpha]}$ is a bijection, and $\widetilde{\beta}_{G'}\circ \phi^{[\alpha]}$ and $\psi^{[\alpha]}\circ \beta_{G}$ agree on $V(G)$. Therefore, $\widetilde{\beta}$ is also an abstract link vertex. It remains to show that $\widetilde{\beta}\notin[\alpha]$. The relation $\widetilde{\beta}_{G'}(\phi^{[\alpha]}(v))=\psi^{[\alpha]}(\beta_G(v))$ for all $v\in V(G)$ implies that the corresponding $k$-coloring $\widetilde{\beta}_{G}$ coincides with $\beta_{G}$. Since $\beta\notin [\alpha]$, it follows that $\widetilde{\beta}\notin [\alpha]$, as desired. \end{proof}

\begin{figure}[ht]
\begin{center}
\begin{tikzpicture}[scale=1, every node/.style={draw=none, fill=none, minimum size=8mm, font=\small}]

\node[font=\LARGE] (Ag) at (-10,0) {$\alpha_G$};
\node[font=\LARGE] (Agp) at (-6.5,0) {$\alpha_{G'}$};
\draw[->, thick] (Ag) -- (Agp) node[midway, above,font=\Large] {$(\psi^{[\alpha]},\phi^{[\alpha]})$};

\node at (-11.5,-0.75) {$H_1$};
\node[fill=red!30, circle, draw] (w1) at (-11.5,1.25) {1};
\node[fill=green!30, circle, draw] (w2) at (-11.5,0) {2};
\node[fill=blue!30, circle, draw] (w3) at (-10.5,-1.15) {3};
\node[fill=red!30, circle, draw] (w4) at (-12.5,-1.15) {1};

\draw (w1) -- (w2) -- (w3) -- (w4) -- (w2);
\node[anchor=east] at (w1.west) {$u_1$};
\node[anchor=east] at (w2.west) {$u_2$};
\node[anchor=west] at (w3.east) {$u_3$};
\node[anchor=east] at (w4.west) {$u_4$};

\node at (-5,-0.75) {$H_1'$};
\node at (0,0) {$L$};

\node[fill=red!30, circle, draw] (v1) at (-5,1.25) {1};
\node[fill=yellow!30, circle, draw] (v2) at (-5,0) {5};
\node[fill=blue!30, circle, draw] (v3) at (-4,-1.15) {3};
\node[fill=red!30, circle, draw] (v4) at (-6,-1.15) {1};

\draw (v1) -- (v2) -- (v3) -- (v4) -- (v2);

\node[fill=red!30, circle, draw] (u1) at ({90+72*(1-1)}:1.5cm) {1};
\node[fill=yellow!30, circle, draw] (u5) at ({90+72*(2-1)}:1.5cm) {5};
\node[fill=black!30, circle, draw] (u4) at ({90+72*(3-1)}:1.5cm) {4};
\node[fill=blue!30, circle, draw] (u3) at ({90+72*(4-1)}:1.5cm) {3};
\node[fill=green!30, circle, draw] (u2) at ({90+72*(5-1)}:1.5cm) {2};

  \foreach \i in {1,...,5} {
    \foreach \j in {1,...,5} {
      \ifnum\i<\j
        \draw (u\i) -- (u\j);
      \fi
    }
  }
\node[anchor=west] at (v1.east) {$v_1$};
\node[anchor=west] at (v2.east) {$v_2$};
\node[anchor=west] at (v3.east) {$v_3$};
\node[anchor=east] at (v4.west) {$v_4$};

\draw (v4) to[out=-35, in=210] (u4);
\draw (v3) to[out=-25, in=185] (u4);
\draw (v2) -- (u4);
\draw (v1) -- (u4);
\end{tikzpicture}
\begin{tikzpicture}[scale=1, every node/.style={draw=none, fill=none, minimum size=8mm, font=\small}]

\node[font=\LARGE] (Ag) at (-10,0) {$\beta_G$};
\node[font=\LARGE] (Agp) at (-6.5,0) {$\widetilde{\beta}_{G'}$};
\draw[->, thick] (Ag) -- (Agp) node[midway, above,font=\Large] {$(\psi^{[\alpha]},\phi^{[\alpha]})$};

\node at (-11.5,-0.75) {$H_1$};
\node[fill=black!30, circle, draw] (w1) at (-11.5,1.25) {4};
\node[fill=red!30, circle, draw] (w2) at (-11.5,0) {1};
\node[fill=black!30, circle, draw] (w3) at (-10.5,-1.15) {4};
\node[fill=green!30, circle, draw] (w4) at (-12.5,-1.15) {2};

\draw (w1) -- (w2) -- (w3) -- (w4) -- (w2);
\node[anchor=east] at (w1.west) {$u_1$};
\node[anchor=east] at (w2.west) {$u_2$};
\node[anchor=west] at (w3.east) {$u_3$};
\node[anchor=east] at (w4.west) {$u_4$};

\node at (-5,-0.75) {$H_1'$};
\node at (0,0) {$L$};

\node[fill=green!30, circle, draw] (v1) at (-5,1.25) {2};
\node[fill=red!30, circle, draw] (v2) at (-5,0) {1};
\node[fill=green!30, circle, draw] (v3) at (-4,-1.15) {2};
\node[fill=yellow!30, circle, draw] (v4) at (-6,-1.15) {5};

\draw (v1) -- (v2) -- (v3) -- (v4) -- (v2);

\node[fill=red!30, circle, draw] (u1) at ({90+72*(1-1)}:1.5cm) {1};
\node[fill=yellow!30, circle, draw] (u5) at ({90+72*(2-1)}:1.5cm) {5};
\node[fill=black!30, circle, draw] (u4) at ({90+72*(3-1)}:1.5cm) {4};
\node[fill=blue!30, circle, draw] (u3) at ({90+72*(4-1)}:1.5cm) {3};
\node[fill=green!30, circle, draw] (u2) at ({90+72*(5-1)}:1.5cm) {2};

  \foreach \i in {1,...,5} {
    \foreach \j in {1,...,5} {
      \ifnum\i<\j
        \draw (u\i) -- (u\j);
      \fi
    }
  }
\node[anchor=west] at (v1.east) {$v_1$};
\node[anchor=west] at (v2.east) {$v_2$};
\node[anchor=west] at (v3.east) {$v_3$};
\node[anchor=east] at (v4.west) {$v_4$};

\draw (v4) to[out=-35, in=210] (u4);
\draw (v3) to[out=-25, in=185] (u4);
\draw (v2) -- (u4);
\draw (v1) -- (u4);
\end{tikzpicture}
\end{center}
    \caption{Proper colorings of two graphs $G$, with $k=4>\chi$, and $G'$, with $k'=5=\chi'$. Consider the pair of functions $(\psi^{[\alpha]},\phi^{[\alpha]})$ defined by $\phi(u_i)=v_i$ and $\psi(1)=1$, $\psi(2)=5$, $\psi(3)=3$, and $\psi(4)=2$ with the palette $\mathcal{P}=\{1,2,3,5\}$.  The top row shows a proper coloring $\alpha_{G'}$ of $G'$ that relates to $\alpha_G$ via the map $(\psi^{[\alpha]},\phi^{[\alpha]})$. These two vertices look identical locally in their coloring graphs. The bottom row shows that this pair $(\psi^{[\alpha]},\phi^{[\alpha]})$, when applied to an abstract link vertex $\beta\not\in[\alpha]$, still creates a proper coloring $\widetilde{\beta}_{G'}$ that locally resembles an abstract link vertex. We note that $G$ and $G'$ do not have equivalent coloring graphs; this example merely illustrates Properties 2 and 3 in Corollary~\ref{cor:locked-vertices} and Lemma~\ref{lem:all_extend}.
    }\label{fig:surgery}
\end{figure}

\subsection{\texorpdfstring{Abstract link vertices of $\C_k(G),k>\chi(G)$ and $\C_{\chi'}(G'),\chi'=\chi(G')$ can never be in bijection if $\mathcal C_k(G)\cong \C_{\chi'}(G')$}{Abstract link vertices of C{k}(G),k>chi(G) and C{chi'}(G'),chi'=chi(G') can never be in bijection if C{k}(G)=C{chi'}(G')}}\label{subsect:final}

In this final subsection, we use the structural results accumulated by Algorithms~\ref{algo:link-vertex}, \ref{algo:partitions}, and \ref{algo:LLG} to establish our main result: if a coloring graph $\mathcal{C}$ arises from a graph with surplus colors, then $\mathcal{C}$ does \emph{not} arise from any other base graph. 

Theorem~\ref{thm:reconstruction} establishes that if $\mathcal{C}_k(G)$ with $k>\chi(G)$ is isomorphic to $\mathcal{C}_{k'}(G')$, then $k'=\chi'\colonequals\chi(G')$ (meaning $G'$ has no surplus colors). Theorem~\ref{thm:main} below proves this remaining scenario is impossible. Assuming, for contradiction, that $\mathcal{C}_k(G)\cong \mathcal{C}_{\chi'}(G')$, we use properties of abstract link vertices. Our analysis reveals that if the shared graph $\mathcal{C}$ is interpreted as $\mathcal{C}_{\chi'}(G')$, it must contain $\chi'!$ times the number of abstract link vertices defined by its structure as $\mathcal{C}_{k}(G)$. This discrepancy yields a contradiction, ruling out the isomorphism. The theorem upgrades Corollary~\ref{cor:equal-number-of-vertices} by removing the dependence on the number of vertices.

\medskip 

\begin{theorem}\label{thm:main}
    Given a graph $G$ and $k>\chi(G)$, there is no graph $G'$ such that $\C_k(G)\cong \C_{\chi(G')}(G')$.
\end{theorem}
\begin{proof}
    Assume, to the contrary, that $\C\cong \C_k(G)\cong\C_{\chi'}(G')$ for some graphs $G$ and $G'$, where $k>\chi(G)$ and $\chi'=\chi(G')$. First, we show that $\chi'>1$. If $\chi'=1$, then $G'$ is the edgeless graph $N_{r}$ (with $r\geq 1$ vertices). The $1$-coloring graph $\mathcal{C}_{1}(N_{r})$ contains exactly one vertex (representing the single coloring that assigns color $1$ to all vertices). In contrast, since $k>\chi(G)$, any proper $\chi(G)$-coloring of $G$ gives at least two distinct proper $k$-colorings of $G$ by embedding its color set into $[k]$ in two different ways. Thus, $\mathcal{C}_{k}(G)$ has at least two vertices, contradicting that $\mathcal{C}_{1}(N_{r})$ has only one vertex. So, we must have $\chi'>1$.
    
    Let $A$ be the set of abstract link vertices in $\mathcal{C}$. Since $\mathcal{C}\cong \mathcal{C}_{k}(G)$ where $k>\chi(G)$, abstract link vertices exist by Lemma~\ref{lem:link-equivalence}, so $|A|\geq 1$. Fix a vertex $\alpha\in A$, and let $(\psi^{[\alpha]},\phi^{[\alpha]})$ be a pair associated with the equivalence class $[\alpha]$ as in Lemma~\ref{lem:gluing-lifting-data}. As before, let $L\subseteq V(G')$ denote the locked set for $(\psi^{[\alpha]},\phi^{[\alpha]})$. For every $\beta\in A$, define $\widetilde{\beta}_{G'}$ by $
    \widetilde{\beta}_{G'}(\phi^{[\alpha]}(v))=\psi^{[\alpha]}(\beta_G(v))$ for $v\in V(G)$ and $\widetilde{\beta}_{G'}(w)=\alpha_{G'}(w)$ for $w\in L$. If $\beta\in [\alpha]$, we have $\widetilde{\beta}_{G'} = \beta_{G'}$ by Corollaries~\ref{cor:gluing-relations} and \ref{cor:equivalent-links-agree-on-L}, which corresponds to an abstract link vertex. If $\beta\notin [\alpha]$, then $\widetilde{\beta}_{G'}$ is a proper $\chi'$-coloring of $G'$ by Lemma~\ref{lem:all_extend}, corresponding to an abstract link vertex in $\C$. The construction produces $|A|$ distinct proper $\chi'$-colorings of $G'$, each representing a different abstract link vertex in $\mathcal{C}$, that all agree pointwise on the coloring of $L$.  
    
    For each such coloring $\widetilde{\beta}_{G'}$, consider the set of $\chi'!$ colorings obtained by applying every permutation $\sigma\in S_{\chi'}$ to the global color set $[\chi']$. Each resulting coloring $\sigma\circ \widetilde{\beta}_{G'}$ is a proper $\chi'$-coloring of $G'$. Since $\widetilde{\beta}_{G'}$ corresponds to an abstract link vertex (a property preserved under permutation of colors), each $\sigma\circ\widetilde{\beta}_{G'}$ must also correspond to an abstract link vertex. These $\chi'!$ colorings generated from a single $\widetilde{\beta}_{G'}$ are all pairwise distinct. Indeed, if $\sigma\circ \widetilde{\beta}_{G'}=\tau\circ \widetilde{\beta}_{G'}$, then their restrictions to $L$ are equal: $\sigma(\alpha_{G'}|_{L})=\tau(\alpha_{G'}|_{L})$. Since $\alpha_{G'}(L)=[\chi']$ by Corollary~\ref{cor:locked-vertices} \eqref{item:every-color-locked}, we deduce that $\sigma=\tau$.
    
    Moreover, the resulting sets of $\chi'!$ colorings generated by two \emph{distinct} initial colorings, say $\widetilde{\beta}_{G'}$ and $\widetilde{\gamma}_{G'}$ (where $\beta, \gamma\in A$ with $\beta\neq \gamma$), are themselves disjoint. Indeed, if $\sigma\circ \widetilde{\beta}_{G'}=\tau\circ \widetilde{\gamma}_{G'}$, then considering the colors on $L$, we again get $\sigma(\alpha_{G'}|_{L})=\tau(\alpha_{G'}|_{L})$. The same reasoning from the previous paragraph yields $\sigma=\tau$. This implies $\widetilde{\beta}_{G'}=\widetilde{\gamma}_{G'}$, and consequently, $\widetilde{\beta}_{G'}(\phi^{[\alpha]}(v))=\widetilde{\gamma}_{G'}(\phi^{[\alpha]}(v))$ for all $v\in V(G)$. By definition, $\psi^{[\alpha]}(\beta_{G}(v)) = \psi^{[\alpha]}(\gamma_{G}(v))$ for all $v\in V(G)$.  As $\psi^{[\alpha]}$ is injective, $\beta_G(v)=\gamma_G(v)$ for all $v\in V(G)$, leading to $\beta_{G}=\gamma_{G}$. This equality between $k$-colorings of $G$ implies $\beta=\gamma$ as vertices in $\mathcal{C}$, yielding a contradiction.

    Therefore, the overall process generates exactly $|A|\cdot \chi'!$ distinct $\chi'$-colorings of $G'$, each corresponding to an abstract link vertex in $\mathcal{C}$. Hence, the number of abstract link vertices in $\mathcal{C}\cong \mathcal{C}_{\chi'}(G')$ is at least $|A|\cdot \chi'!$. By definition, the number of abstract link vertices in $\mathcal{C}\cong \mathcal{C}_{k}(G)$ is exactly $|A|$. Comparing the two counts, we obtain $|A|\geq |A|\cdot \chi'!$. This inequality yields a final contradiction, because $|A|\geq 1$ and $\chi'>1$.
\end{proof}

The progression of results in Section~\ref{sec:unique-if-surplus}, which collectively establish Theorem~\ref{thm:main}, is visualized in Figure~\ref{fig:flowchart} on the next page. Combining Theorem~\ref{thm:reconstruction} and Theorem~\ref{thm:main}, our main theorem, Theorem~\ref{thm:main-intro}, is now proved. We conclude that coloring graphs with surplus colors are complete graph invariants. 

\medskip 

\textbf{Acknowledgments.} We thank the referees for their helpful comments and suggestions, which improved the exposition of the paper.

\newpage 

\begin{figure}[ht]
    \centering
\begin{tikzpicture}[
    node_style/.style={
        rectangle,
        rounded corners,
        draw=blue!50!black!100,
        fill=blue!15,
        text width=3.2cm, 
        align=center,
        minimum height=1.1cm,
        font=\scriptsize
    },
    thm_style/.style={
        node_style,
        fill=red!20,
        draw=red!50!black!100,
        font=\scriptsize\bfseries,
        text width=3.9cm
    },
     algo_style/.style={
        node_style,
        draw=orange!50!black!100,
        fill=orange!25,
        text width=3.4cm,
        align=center,
        minimum height=1cm,
        font=\scriptsize
    },
    hyp_style/.style={
        node_style,
        draw=gray!60!black,
        fill=gray!23,
        text width=3.4cm,
        align=center,
        minimum height=1cm,
        font=\scriptsize
    },
    arrow_style/.style={
        -Stealth,
        thick,
        draw=black!70
    },
    node distance=1.5cm and 1cm
]

\node[algo_style] (A1) {Algorithm 1: \\ Reconstruction + Link Vertices};
\node[hyp_style, below right = 1cm and 0.5 cm of A1] (H1) {Hypothesis 1: $\mathcal{C}_k(G)\cong\mathcal{C}_{\chi'}(G')$};
\node[algo_style, right = 1cm of A1] (A2) {Algorithm 2: \\ Partition Extraction};
\node[node_style, left=0.5cm of H1] (L43) {Lem 4.3: Abstract link vertices $\leftrightarrow$ link $k$-colorings};
\node[node_style, right=0.5cm of H1] (L411) {Lem 4.11: Alg 2 identifies partitions};

\node[node_style, right=1cm of L411] (L419) {Lem 4.19: Alg 3 labels link graphs uniquely};

\node[node_style, below=1.2cm of L419] (L418) {Lem 4.18: Certain labeled $2$-paths imply a unique $6$-cycle of cubes};

\node[node_style, below=1.2cm of L43] (T44) {Thm 4.4: Unique $G$ with surplus colors generates $\C$};

\node[node_style, below=1.2cm of T44] (L46) {Lem 4.6: Components of $G$ are induced in $G'$};
\node[node_style, right=of L46, xshift=1.2cm] (L412) {Lem 4.12 Partitions induced by a link coloring are the same for $G$ and $G'$};

\node[node_style, below=1.5cm of L412, xshift=-2.4cm] (L423) {Lem 4.23: A pair ($\psi^{[\alpha]},\phi^{[\alpha]})$ relates links in $G$ and $G'$};

\node[node_style, below=0.8cm of L423, xshift=-1cm] (L421) {Lem 4.21 and 4.22: cycles in $\mathcal{L}'_{[\alpha]}$};

\node[algo_style, right=1.6cm of A2] (A3) {Algorithm 3: Identify Labeled Link Graph};

\node[node_style, right=1.5cm of L423] (C425) {Cor 4.25: Colorings in $[\alpha]$ agree on $L$};
\node[node_style, right=1.2cm of C425] (C426) {Cor 4.26: Palette and locked set properties};
\node[node_style, below=1.2cm of C426] (L427) {Lem 4.27: Permuting colors creates new links};

\node[thm_style, below=1.8cm of L423, yshift=-0.5cm, xshift=3cm] (T428) {Thm 4.28: $\mathcal{C}_k(G) \not\cong \mathcal{C}_{\chi'}(G')$};

\draw[arrow_style] (H1.south) -- (L46.north east); %
\draw[arrow_style] (H1.south) -- (L412.north); %

\draw[arrow_style] (L418) -- (L419);

\draw[arrow_style] (L43) -- (T44);

\draw[arrow_style] (A3) -- (L419);

\draw[arrow_style] (L411) to (L412);

\draw[arrow_style] (A1) to (A2);

\draw[arrow_style] (A2) to (A3);

\draw[arrow_style] (A2) to (L411);

\draw[arrow_style] (A3) to[bend left=50] (L423);

\draw[arrow_style] (T44) -- (L46);

\draw[arrow_style] (L46) -- (L412);
\draw[arrow_style] (L46) to (L423);

\draw[arrow_style] (L412) to (L423);

\draw[arrow_style] (A1) to (L43);

\draw[arrow_style] (L423) -- (C425);
\draw[arrow_style] (L423) to[bend right=15] (C426);
\draw[arrow_style] (L423.south) -- (T428.north);

\draw[arrow_style] (L421) to (L423);

\draw[arrow_style] (C425) to (C426);

\draw[arrow_style] (C426) -- (L427.north);
\draw[arrow_style] (C426) -- (T428.north east);

\draw[arrow_style] (L427) -- (T428.east);

\end{tikzpicture}
\caption{Flowchart for Section~\ref{sec:unique-if-surplus}}\label{fig:flowchart}
\end{figure}

\bibliographystyle{alpha}
\bibliography{biblio}

\end{document}